\newcommand{\email}[1]{\href{mailto:#1}{#1}}
\numberwithin{equation}{section}
\newtheorem{theorem}{Theorem}
\newtheorem{lemma}[theorem]{Lemma}
\newtheorem{corollary}[theorem]{Corollary}
\theoremstyle{remark}
\newtheorem{remark}[theorem]{Remark}
\theoremstyle{definition}
\newtheorem{assumption}[theorem]{Assumption}
\newcommand{\st}{\,:\,}
\newcommand{\Real}{\mathbb{R}}
\DeclareRobustCommand{\bvec}[1]{\boldsymbol{#1}}
  \renewcommand{\bvec}[1]{#1}%
\newcommand{\uvec}[1]{\underline{\bvec{#1}}}
\newcommand{\cvec}[1]{\bvec{\mathcal{#1}}}
\newcommand{\rotation}[1]{\varrho_{#1}}
\DeclareMathOperator{\GRAD}{\bf grad}
\DeclareMathOperator{\CURL}{\bf curl}
\DeclareMathOperator{\DIV}{div}
\DeclareMathOperator{\ROT}{rot}
\DeclareMathOperator{\VROT}{\bf rot}
\DeclareMathOperator{\LAPL}{\Delta}
\newcommand{\compl}{{\rm c}}
\newcommand{\Hcurl}[1]{\bvec{H}(\CURL;#1)}
\newcommand{\Hscurl}[2]{\bvec{H}^{#1}(\CURL;#2)}
\newcommand{\Hdiv}[1]{\bvec{H}(\DIV;#1)}
\newcommand{\Xgrad}[2][k]{\underline{X}_{\GRAD,#2}^{#1}}
\newcommand{\XgradO}[1]{\underline{X}_{\GRAD,#1,0}^k}
\newcommand{\Xcurl}[2][k]{\underline{\bvec{X}}_{\CURL,#2}^{#1}}
\newcommand{\Xdiv}[2][k]{\underline{\bvec{X}}_{\DIV,#2}^{#1}}
\newcommand{\Xbullet}[1]{\underline{X}_{\bullet,#1}^k}
\newcommand{\Igrad}[1]{\underline{I}_{\GRAD,#1}^k}
\newcommand{\Icurl}[1]{\uvec{I}_{\CURL,#1}^k}
\newcommand{\Idiv}[1]{\uvec{I}_{\DIV,#1}^{k}}
\newcommand{\RT}[1]{\boldsymbol{\mathcal{RT}}^{#1}}
\newcommand{\lproj}[2]{\Pi_{\mathcal{P},#2}^{#1}}
\newcommand{\vlproj}[2]{\boldsymbol{\Pi}_{\cvec{P},#2}^{#1}}
\newcommand{\Rproj}[2]{\bvec{\Pi}_{\cvec{R},#2}^{#1}}
\newcommand{\Rcproj}[2]{\bvec{\Pi}_{\cvec{R},#2}^{\compl,#1}}
\newcommand{\Gproj}[2]{\bvec{\Pi}_{\cvec{G},#2}^{#1}}
\newcommand{\Gcproj}[2]{\bvec{\Pi}_{\cvec{G},#2}^{\compl,#1}}
\newcommand{\uGT}[1][k]{\uvec{G}_T^{#1}}
\newcommand{\uCT}[1][k]{\uvec{C}_T^{#1}}
\newcommand{\uGh}[1][k]{\uvec{G}_h^{#1}}
\newcommand{\uCh}[1][]{\uvec{C}_h^k}
\newcommand{\Dh}[1][]{D_h^k}
\newcommand{\GE}[1][k]{G_E^{#1}}
\newcommand{\cGF}[1][k]{\boldsymbol{\mathsf{G}}_F^{#1}}
\newcommand{\cGT}[1][k]{\boldsymbol{\mathsf{G}}_T^{#1}}
\newcommand{\cGh}[1][k]{\boldsymbol{\mathsf{G}}_h^{#1}}
\newcommand{\CF}{C_F^k}
\newcommand{\cCT}{\boldsymbol{\mathsf{C}}_T^k} 
\newcommand{\cCh}{\boldsymbol{\mathsf{C}}_h^k} 
\newcommand{\DT}{D_T^k}
\newcommand{\trF}{\gamma_F^{k+1}}
\newcommand{\trFt}{\bvec{\gamma}_{{\rm t},F}^k}
\newcommand{\faces}[1]{\mathcal{F}_{#1}}
\newcommand{\edges}[1]{\mathcal{E}_{#1}}
\newcommand{\vertices}[1]{\mathcal{V}_{#1}}
\newcommand{\FT}{\faces{T}}
\newcommand{\ET}{\edges{T}}
\newcommand{\EF}{\edges{F}}
\newcommand{\VT}{\vertices{T}}
\newcommand{\VF}{\vertices{F}}
\newcommand{\normal}{\bvec{n}}
\newcommand{\tangent}{\bvec{t}}
\newcommand{\Poly}[2][]{\mathcal{P}_{#1}^{#2}}
\newcommand{\vPoly}[2][]{\cvec{P}_{#1}^{#2}}
\newcommand{\Roly}[1]{\cvec{R}^{#1}}
\newcommand{\Goly}[1]{\cvec{G}^{#1}}
\newcommand{\cRoly}[1]{\cvec{R}^{\compl,#1}}
\newcommand{\cGoly}[1]{\cvec{G}^{\compl,#1}}
\newcommand{\norm}[2][]{\|#2\|_{#1}}
\newcommand{\seminorm}[2][]{|#2|_{#1}}
\newcommand{\vvvert}{\vert\kern-0.25ex\vert\kern-0.25ex\vert}
\newcommand{\tnorm}[2][]{\vvvert #2\vvvert_{#1}}
\newcommand{\term}{\mathfrak{T}}
\DeclareMathOperator{\Ker}{Ker}
\newcommand{\Mh}[1][h]{\mathcal{M}_{#1}}
\newcommand{\Th}[1][h]{\mathcal{T}_{#1}}
\newcommand{\Fh}[1][h]{\mathcal{F}_{#1}}
\newcommand{\Eh}[1][h]{\mathcal{E}_{#1}}
\newcommand{\Vh}{\mathcal{V}_h}
\newcommand{\Pgrad}[1][k+1]{P_{\GRAD,T}^{#1}}
\newcommand{\Pcurl}[1][T]{\bvec{P}_{\CURL,#1}^k}
\newcommand{\Pdiv}[1][T]{\bvec{P}_{\DIV,#1}^k}
\newcommand{\Pbullet}[1][l]{P_{\bullet,T}^{#1}}
\newcommand{\dE}{\: }
\newcommand{\dPP}{\: }
\newcommand{\ds}{\: }
\newcommand{\df}{\: }
\newcommand\xxE{{\boldsymbol x}_{T}}
\newcommand\xxf{{\boldsymbol x}_{F}}
\renewcommand\vv{\boldsymbol v}
\newcommand\ff{\boldsymbol f}
\newcommand\uu{\boldsymbol u}
\newcommand\xx{\boldsymbol x}
\newcommand\var{\boldsymbol \varphi}
\newcommand\ww{\boldsymbol w}
\newcommand{\kdP}{k-1}
\newcommand{\node}{{\rm n}}
\newcommand{\edge}{{\rm e}}
\newcommand{\face}{{\rm f}}
\newcommand{\vol}{{\rm v}}
\newcommand{\kd}{k}
\newcommand{\kr}{k-1}
\newcommand\nn{\boldsymbol n}     
\renewcommand\tt{\boldsymbol t} 
\newcommand{\logLogSlopeTriangle}[5]
{
    \pgfplotsextra
    {
        \pgfkeysgetvalue{/pgfplots/xmin}{\xmin}
        \pgfkeysgetvalue{/pgfplots/xmax}{\xmax}
        \pgfkeysgetvalue{/pgfplots/ymin}{\ymin}
        \pgfkeysgetvalue{/pgfplots/ymax}{\ymax}

        \pgfmathsetmacro{\xArel}{#1}
        \pgfmathsetmacro{\yArel}{#3}
        \pgfmathsetmacro{\xBrel}{#1-#2}
        \pgfmathsetmacro{\yBrel}{\yArel}
        \pgfmathsetmacro{\xCrel}{\xArel}

        \pgfmathsetmacro{\lnxB}{\xmin*(1-(#1-#2))+\xmax*(#1-#2)} 
        \pgfmathsetmacro{\lnxA}{\xmin*(1-#1)+\xmax*#1} 
        \pgfmathsetmacro{\lnyA}{\ymin*(1-#3)+\ymax*#3} 
        \pgfmathsetmacro{\lnyC}{\lnyA+#4*(\lnxA-\lnxB)}
        \pgfmathsetmacro{\yCrel}{\lnyC-\ymin)/(\ymax-\ymin)}

        \coordinate (A) at (rel axis cs:\xArel,\yArel);
        \coordinate (B) at (rel axis cs:\xBrel,\yBrel);
        \coordinate (C) at (rel axis cs:\xCrel,\yCrel);

        \draw[#5]   (A)-- node[pos=0.5,anchor=north] {\scriptsize{1}}
                    (B)-- 
                    (C)-- node[pos=0.,anchor=west] {\scriptsize{#4}} 
                    cycle;
    }
}
\begin{document}

\title{Arbitrary-order pressure-robust DDR and VEM methods for the Stokes problem on polyhedral meshes}

\author[1]{Louren\c{c}o Beir\~{a}o da Veiga}
\author[1]{Franco Dassi}
\author[2]{Daniele A. Di Pietro}
\author[3]{J\'er\^ome Droniou}
\affil[1]{Dipartimento di Matematica e Applicazioni, Universit\`{a} di Milano Bicocca, Italy, \email{lourenco.beirao@unimib.it}, \email{franco.dassi@unimib.it}}
\affil[2]{IMAG, Univ Montpellier, CNRS, Montpellier, France, \email{daniele.di-pietro@umontpellier.fr}}
\affil[3]{School of Mathematics, Monash University, Melbourne, Australia, \email{jerome.droniou@monash.edu}}

\maketitle

\begin{abstract}
  This paper contains two major contributions.
  First we derive, following the discrete de Rham (DDR) and Virtual Element (VEM) paradigms, pressure-robust methods for the Stokes equations that support arbitrary orders and polyhedral meshes.
  Unlike other methods presented in the literature, pressure-robustness is achieved here without resorting to an $\bvec{H}(\DIV)$-conforming construction on a submesh, but rather projecting the volumetric force onto the discrete $\bvec{H}(\CURL)$ space.
  The cancellation of the pressure error contribution stems from key commutation properties of the underlying DDR and VEM complexes.
  The pressure-robust error estimates in $h^{k+1}$ (with $h$ denoting the meshsize and $k\ge 0$ the polynomial degree of the DDR or VEM complex) are proven theoretically and supported by a panel of three-dimensional numerical tests.
  The second major contribution of the paper is an in-depth study of the relations between the DDR and VEM approaches.
  We show, in particular, that a complex developed following one paradigm admits a reformulation in the other, and that couples of related DDR and VEM complexes satisfy commuting diagram properties with the degrees of freedom maps.\medskip\\  
  \textbf{Key words.} Stokes problem, pressure-robustness, discrete de Rham method, Virtual Element method, compatible discretisations, polyhedral methods\medskip\\  
  \textbf{MSC2010.} 65N12, 65N30, 65N99, 76D07
\end{abstract}



\section{Introduction}

Denote by $\Omega\subset\Real^3$ an open connected polyhedral domain.
For the sake of simplicity, we assume that $\Omega$ has trivial topology, i.e., there is no tunnel crossing it and it does not enclose any void.
Given a volumetric force $\bvec{f}:\Omega\to\Real^3$, the Stokes problem for a homogeneous Newtonian fluid with unit viscosity reads:
\begin{equation}\label{eq:strong}
  \left\{~
  \begin{aligned}
    &\text{Find the velocity $\bvec{u}:\Omega\to\Real^3$ and the pressure $p:\Omega\to\Real$ such that}
    \\
    &\CURL(\CURL\bvec{u}) + \GRAD p = \bvec{f}\quad\text{in $\Omega$},
    \\
    &\DIV\bvec{u} = 0 \quad\text{in $\Omega$},
    \\
    &\CURL\bvec{u}\times\normal = \bvec{0} \quad\text{on $\partial\Omega$},
    \\
    &\bvec{u}\cdot\normal = 0 \quad\text{on $\partial\Omega$},
    \\
    &\int_\Omega p = 0.
  \end{aligned}
  \right.
\end{equation}
Notice that, in the momentum balance equation, we have used the vector calculus identity $-\LAPL\bvec{u} = \CURL(\CURL\bvec{u}) - \GRAD(\DIV\bvec{u})$ along with the fact that $\DIV\bvec{u}=0$ to reformulate the viscous term as the curl of the vorticity.
The trivial topology assumption is made to simplify the exposition: for domains crossed by tunnels (i.e., for which the first Betti number is non-zero), one additionally has to enforce the orthogonality of the velocity to 1-harmonic forms for well-posedness.
We do not delve further into this topic here and refer to, e.g., \cite[Chapter 4]{Arnold:18} for additional details.
We also consider homogeneous natural boundary conditions only for the sake of simplicity: the extension to non-homogeneous and/or essential boundary conditions is possible (see, e.g., the discussion in \cite{Bonelle.Ern:15} concerning the formulation corresponding to Eq.~(1) therein).
We also notice that the extension of the method to more standard boundary conditions is possible by modifying the space for the velocity (e.g., for wall boundary conditions, it suffices to consider the subspace of velocities with vanishing tangential components on boundary edges and faces -- these components being naturally available in our discrete spaces).
The details are postponed to a future work.
We finally point out that the two-dimensional case can be recovered as described in \cite[Remark 10]{Castanon-Quiroz.Di-Pietro:20}, and the resulting scheme has analogous robustness properties as the ones discussed below for the three-dimensional case.

We are interested in the weak formulation of problem \eqref{eq:strong} described hereafter.
Assume $\bvec{f}\in\bvec{L}^2(\Omega)$ and denote by $H^1(\Omega)$ and $\Hcurl{\Omega}$ the spaces of functions that are square-integrable over $\Omega$ along with their gradient and curl, respectively.
Additionally letting $L^2_0(\Omega)\coloneq\left\{q\in L^2(\Omega)\st\int_\Omega q = 0\right\}$, the weak formulation of problem \eqref{eq:strong} reads:
\begin{equation}\label{eq:variational}
  \left\{~
  \begin{aligned}
    & \text{Find $\uu \in \Hcurl{\Omega}$ and  $p \in H^1(\Omega)\cap L^2_0(\Omega)$ such that} \\
    & \int_\Omega \CURL \uu \cdot \CURL \vv + \int_\Omega \GRAD p\cdot\vv = \int_\Omega \ff \cdot \vv
    \quad \forall \vv \in \Hcurl{\Omega} \\
    & \int_\Omega \GRAD q\cdot \uu  = 0 \quad \forall q \in H^1(\Omega)\cap L^2_0(\Omega).
  \end{aligned}
  \right.
\end{equation}
Problem \eqref{eq:variational} admits a unique solution which, if regular enough, satisfies \eqref{eq:strong} almost everywhere.
It is a simple matter to check that changing the irrotational component of the body force $\bvec{f}$ only affects the pressure $p$, leaving the velocity $\bvec{u}$ unaltered.
When considering numerical approximations, the failure to reproduce this property at the discrete level can have a sizeable impact on the quality of the numerical solution \cite{Linke:14,john-linke-merdon-neilan-rebholz:2017}.
This can happen, e.g., when the Coriolis force is taken into account (in two dimensions, this force is always irrotational).
A numerical example where large unphysical oscillations occur due to the lack of pressure-robustness is provided in \cite{Castanon-Quiroz.Di-Pietro:20}; see, in particular, Fig.~4 therein.
Numerical schemes that behave robustly with respect to the magnitude of the irrotational part of the body force are often referred to as \emph{pressure-robust}.
From the analysis standpoint, such methods guarantee velocity error estimates that are independent of the pressure. 

The issue of pressure-robustness for finite element discretizations on standard (conforming) meshes has been addressed in several works.
A two-dimensional finite element pair on standard triangular meshes which is conforming, inf-sup stable, and delivers $\Hdiv{\Omega}$-conforming approximations of the velocity has been developed in \cite{Falk.Neilan:13} using as a starting point the Stokes complex;  see also \cite{Zhang:16} for an extension to quadrilateral elements.
$\Hdiv{\Omega}$-conforming velocity approximations naturally lead to pressure-robustness.
A related strategy to recover this property for a variety of numerical schemes is outlined in \cite{Linke.Merdon:16} (see also \cite{Wang.Mu.ea:21}), where the authors suggest a modification of the right-hand side involving the projection of the test function onto an $\Hdiv{\Omega}$-conforming space.
This strategy has been applied to the design of pressure-robust Hybrid High-Order (HHO) discretizations of the Stokes problem on conforming simplicial meshes in \cite{Di-Pietro.Ern.ea:16*1}; see also \cite[Section 8.6]{Di-Pietro.Droniou:20} and \cite{Castanon-Quiroz.Di-Pietro:20} (along with the precursor works \cite{Di-Pietro.Krell:18,Botti.Di-Pietro.ea:19*1}) concerning the extension to the full Navier--Stokes equations.
In \cite{Beirao-da-Veiga.Lovadina.ea:17,BLV:2018,BDV:3D:ST}, the authors proposed a family of Virtual Element schemes for general polytopal meshes such that the virtual velocity is divergence-free and enjoys error bounds that do not depend directly on the pressure; although this represents an improvement with respect to standard inf-sup stable methods, the scheme is not fully pressure-robust since the velocity error depends indirectly on the pressure through a higher order loading term. We can designate such schemes as \emph{asymptotically pressure robust}, meaning that the terms involving the pressure in the right-hand side of the error estimates are of higher-order than the dominating error component.
 
Adapting the strategy of \cite{Linke.Merdon:16} to general polytopal meshes can be problematic owing to the difficulty of devising discrete spaces that are both \emph{$\Hdiv{\Omega}$-conforming} and \emph{fully computable}.
$\Hdiv{\Omega}$-conforming virtual spaces, e.g., fulfill the first requirement but not the second; as a result, when used in the design of numerical schemes, they only lead to \emph{asymptotic pressure robustness}.
One possibility then consists in constructing $\Hdiv{\Omega}$-conforming spaces based on a matching and shape regular simplicial submesh, as recently proposed in \cite{Frerichs.Merdon:20}; see also \cite{Castanon-Quiroz.Di-Pietro:22} concerning the application of a similar strategy to HHO methods.
While this approach leads to fully pressure-robust methods, it hinges on a construction that can be computationally expensive, particularly in dimension 3 and/or in the presence of faces and edges that are orders of magnitude smaller than the parent element.

In this work we explore a different strategy based on a \emph{compatible} approach, that is, we replace the spaces that appear in the weak formulation \eqref{eq:variational} with finite-dimensional counterparts that form an exact complex when connected by (discrete counterparts of) the usual vector calculus operators.
Pressure-robustness is then obtained projecting the body force $\bvec{f}$ onto the discrete $\Hcurl{\Omega}$ space and leveraging a commutativity property involving the interpolators on the discrete counterparts of $\Hcurl{\Omega}$ and $H^1(\Omega)$ and the (discrete analog of) the gradient operator.
A similar strategy has been considered in \cite{Bonelle.Ern:15} in the context of Compatible Discrete Operators, leading to a pressure-robust, low-order method on general polytopal meshes.
We also mention here \cite[Remark 23]{Chave.Di-Pietro.ea:20} on a related approach for HHO methods on standard meshes.
Two different design paradigms are considered: the discrete de Rham (DDR) approach of \cite{Di-Pietro.Droniou:21*1} (see also \cite{Di-Pietro.Droniou.ea:20,Di-Pietro.Droniou:21}), where both the spaces and differential operators are replaced by discrete analogs, and the Virtual Element Method (VEM) of \cite{2Dmagneto,Beirao-da-Veiga.Brezzi.ea:18*2} (see also \cite{Beirao-da-Veiga.Brezzi.ea:16,Beirao-da-Veiga.Brezzi.ea:18*1}), where compatible and conforming (but not fully computable) spaces are exploited to design a numerical scheme through computable projections.
In both cases, we obtain fully pressure-robust schemes that, when complexes of degree $k\ge 0$ are used as starting points, converge as $h^{k+1}$ (with $h$ denoting, as usual, the meshsize) in the graph norm.
The key feature of both schemes is that they achieve pressure-robustness on general polytopal meshes \emph{without} resorting to a matching simplicial submesh.

This work also contains a second important contribution, namely the construction of bridges between the DDR and VEM approaches.
Specifically, we recast the spaces and local constructions of each paradigm into the other, thus enabling an in-depth comparison.
On one hand, this shows differences in the choice (and polynomial degree) of certain degrees of freedom;
on the other hand, it reveals that the reduction of the number of unknowns is obtained through different strategies in the two methods (serendipity for VEM, a systematic use of enhancement for DDR).
The links established in the present work can serve as a starting point for cross-fertilization of these approaches.

The rest of the paper is organized as follows.
In Section \ref{sec:setting} we establish the discrete setting.
In Sections \ref{sec:ddr} and \ref{sec:discre1} we state, respectively, the DDR and VEM schemes along with the corresponding pressure-robust error estimates.
A numerical study of the methods is performed in Section \ref{sec:numerical.examples}, where we also verify in practice the pressure-robustness property.
Bridges between the DDR and VEM schemes are built in Section \ref{sec:bridge}.
Finally, Section \ref{sec:theoretical} contains the proofs of the main results.


\section{Setting}\label{sec:setting}

\subsection{Mesh and orientation of mesh entities}

For any measurable set $Y\subset\Real^3$, we denote by $h_Y\coloneq\sup\{|\bvec{x}-\bvec{y}|\st \bvec{x},\bvec{y}\in Y\}$ its diameter and by $|Y|$ its Hausdorff measure.
We consider meshes $\Mh\coloneq\Th\cup\Fh\cup\Eh\cup\Vh$ of the domain $\Omega$, where:
$\Th$ is a finite collection of open disjoint polyhedral elements such that $\overline{\Omega} = \bigcup_{T\in\Th}\overline{T}$ and $h=\max_{T\in\Th}h_T>0$;
$\Fh$ is a finite collection of open planar faces;
$\Eh$ is the set collecting the open polygonal edges (line segments) of the faces;
$\Vh$ is the set collecting the edge endpoints.
It is assumed, in what follows, that $(\Th,\Fh)$ matches the conditions in \cite[Definition 1.4]{Di-Pietro.Droniou:20}.
We additionally assume that the polytopes in $\Th\cup\Fh$ are simply connected and have connected Lipschitz-continuous boundaries.
The set collecting the mesh faces that lie on the boundary of a mesh element $T\in\Th$ is denoted by $\FT$.
For any mesh element or face $Y\in\Th\cup\Fh$, we denote, respectively, by $\edges{Y}$ and $\vertices{Y}$ the set of edges and vertices of $Y$.

For any face $F\in\Fh$, an orientation is set by prescribing a unit normal vector $\normal_F$ and, for any mesh element $T\in\Th$ sharing $F$, we denote by $\omega_{TF}\in\{-1,1\}$ the orientation of $F$ relative to $T$ such that $\omega_{TF}\normal_F$ points out of $T$.
For any edge $E\in\Eh$, an orientation is set by prescribing the unit tangent vector $\tangent_E$.
Denote by $F\in\Fh$ a face such that $E\in\EF$ and let $\normal_{FE}$ be the unit vector normal to $E$ lying in the plane of $F$ such that $(\tangent_E,\normal_{FE})$ forms a system of right-handed coordinates.
We let $\omega_{FE}\in\{-1,1\}$ be the orientation of $E$ relative to $F$ such that $\omega_{FE}\normal_{FE}$ points out of $F$.

\subsection{Differential operators on faces and tangential trace}

For any mesh face $F\in\Fh$, we denote by $\GRAD_F$ and $\DIV_F$ the tangent gradient and divergence operators acting on smooth enough functions over $F$.
Moreover, for any $r:F\to\Real$ and $\bvec{z}:F\to\Real^2$ smooth enough, we define the two-dimensional vector and scalar curl operators such that
\[
  \VROT_F r\coloneq \rotation{-\nicefrac\pi2}(\GRAD_F r)\quad\mbox{ and }\quad \ROT_F\bvec{z}=\DIV_F(\rotation{-\nicefrac\pi2}\bvec{z}),
\]
where $\rotation{-\nicefrac\pi2}$ is the rotation of angle $-\frac\pi2$ in the oriented tangent space to $F$.
When considering the face $F\in\Fh$ as immersed in $\Real^3$, both $\GRAD_F$ and $\VROT_F$ act on the restrictions to $F$ of scalar-valued functions of the three-dimensional space coordinate.
Similarly, $\DIV_F$ and $\ROT_F$ act on the tangential trace on $F$ of vector-valued functions of the three-dimensional space coordinate.
The tangential trace is hereafter denoted appending the index ``${\rm t},\! F$'' to the name of the function so that, e.g., given $\bvec{v}:\Omega\to\Real^3$ smooth enough and $F\in\Fh$, $\bvec{v}_{{\rm t},F}\coloneq\normal_F\times(\bvec{v}_{|F}\times\normal_F)$.

\subsection{Lebesgue and Hilbert spaces}

For $Y$ measured subset of $\Real^3$, we denote by $L^2(Y)$ the Lebesgue space spanned by functions that are square-integrable over $Y$.
When $Y$ is an $n$-dimensional set (typically a mesh element or face), we will use the boldface notation $\bvec{L}^2(Y)\coloneq L^2(Y)^n$ for the space of vector-valued fields over $Y$ with square-integrable components.
Given $s>0$ and $Y\in\{\Omega\}\cup\Th\cup\Fh$, $H^s(Y)$ will denote the usual Hilbert space of index $s$ on $Y$, and we additionally let $\bvec{H}^s(Y)\coloneq H^s(Y)^n$ and $\bvec{C}^s(\overline{Y})\coloneq C^s(\overline{Y})^n$ with $C^s(\overline{Y})$ spanned by functions that are continuous, along with their derivatives up to order $s$, up to the boundary of $Y$.

For all $Y\in\{\Omega\}\cup\Th$, $\Hcurl{Y}$ and $\Hdiv{Y}$ denote the spaces of vector-valued functions that are square integrable along with their curl or divergence, respectively.
We additionally let, for any $s>0$, $\Hscurl{s}{Y} \coloneq\{\vv\in\bvec{H}^s(Y)\st{\CURL\vv\in\bvec{H}^s(Y)}\}$.
We notice, in passing, that trace theorems and integration by parts formulas for $\Hcurl{Y}$ and $\Hdiv{Y}$ in polyhedral domains involve subtleties that are out of the scope of the present exposition (and not directly useful to us as we will only require traces of functions that are smooth enough); we refer to \cite{Buffa.Ciarlet:01,Buffa.Costabel.ea:02} and references therein for a rigorous study of this subject.

The regularity requirements in the error estimates will be expressed in terms of the broken Hilbert spaces $H^s(\Th)\coloneq\left\{q\in L^2(\Omega)\st{q_{|T}\in H^s(T)\quad\forall T\in\Th}\right\}$.
According to the previously established conventions, the corresponding vector-valued version is denoted by $\bvec{H}^s(\Th)$, and we additionally let $\Hscurl{s}{\Th} \coloneq\left\{\vv\in\bvec{L}^2(\Omega)\st{\vv\in\Hscurl{s}{T}\quad\forall T\in\Th}\right\}$.

\subsection{Polynomial spaces and decompositions}\label{sec:polynomial.spaces}

For a given integer $l\ge 0$, $\mathbb{P}_n^l$ denotes the space of $n$-variate polynomials of total degree $\le l$, with the convention that $\mathbb{P}_n^{-1} \coloneq \{ 0 \}$ for any $n$.
For any $Y\in\Th\cup\Fh\cup\Eh$, we denote by $\Poly{l}(Y)$ the space spanned by the restriction to $Y$ of the functions in $\mathbb{P}_3^l$.
Denoting by $1\le n\le 3$ the dimension of $Y$, $\Poly{l}(Y)$ is isomorphic to $\mathbb{P}_n^l$ (see \cite[Proposition 1.23]{Di-Pietro.Droniou:20}).
In what follows, with a small abuse of notation, both spaces are denoted by $\Poly{l}(Y)$.
We also denote by
  \begin{equation}\label{defPh}
    \Poly[0]{l}(Y)\coloneq\left\{q\in\Poly{l}(Y)\st\int_Y q = 0\right\}
  \end{equation}
the subspace of $\Poly{l}(Y)$ spanned by functions in $\Poly{l}(Y)$ with zero mean value over $Y$.
For the sake of brevity, we also introduce the boldface notations $\vPoly{l}(T)\coloneq\Poly{l}(T)^3$ for all $T\in\Th$ and $\vPoly{l}(F)\coloneq\Poly{l}(F)^2$ for all $F\in\Fh$.
For $Y$ as above, we additionally denote by $\lproj{l}{Y}$ (resp.\ $\vlproj{l}{Y}$) the $L^2$-orthogonal projector on $\Poly{l}(Y)$ (resp.\ $\vPoly{l}(Y)$).

For all $Y\in\Th\cup\Fh$ and $\bvec{x} \in Y$, we introduce the translated coordinate vector $\bvec{x}_Y = \bvec{x} - \bvec{c}_Y$ where, for each given $Y$, we have fixed a point $\bvec{c}_Y \in Y$ such that $Y$ contains a ball centered at $\bvec{c}_Y$ of radius $\rho h_Y$, with $\rho$ denoting the mesh regularity parameter (see \cite[Definition 1.9]{Di-Pietro.Droniou:20} and also Assumption \ref{ass:star-shaped} in Section \ref{sec:prel} concerning the VEM scheme).
For any mesh face $F\in\Fh$ and any integer $l\ge 0$, we define the following relevant subspaces of $\vPoly{l}(F)$:
\begin{equation}\label{eq:spaces.F}
  \begin{alignedat}{2}
    \Goly{l}(F)&\coloneq\GRAD_F\Poly{l+1}(F),
    &\qquad
    \cGoly{l}(F)&\coloneq\bvec{x}_F^\perp\,\Poly{l-1}(F),
    \\ 
    \Roly{l}(F)&\coloneq\VROT_F\Poly{l+1}(F),
    &\qquad
    \cRoly{l}(F)&\coloneq\bvec{x}_F\,\Poly{l-1}(F),
  \end{alignedat}
\end{equation}
where $\bvec{y}^\perp$ is a shorthand for the vector $\bvec{y}$ rotated by $-\frac{\pi}2$ in $F$, so that
\[
\vPoly{l}(F)
= \Goly{l}(F) \oplus \cGoly{l}(F)
= \Roly{l}(F) \oplus \cRoly{l}(F).
\]
The $L^2$-orthogonal projectors on the spaces \eqref{eq:spaces.F} are, with obvious notation, $\Gproj{l}{F}$, $\Gcproj{l}{F}$, $\Rproj{l}{F}$, and $\Rcproj{l}{F}$.
Similarly, for any mesh element $T\in\Th$ and any integer $l\ge 0$, we introduce the following subspaces of $\vPoly{l}(T)$:
\begin{equation}\label{eq:spaces.T}
  \begin{alignedat}{2}
    \Goly{l}(T)&\coloneq\GRAD\Poly{l+1}(T),
    &\qquad 
    \cGoly{l}(T)&\coloneq\bvec{x}_T\times \vPoly{l-1}(T),
    \\
    \Roly{l}(T)&\coloneq\CURL\vPoly{l+1}(T),
    &\qquad
    \cRoly{l}(T)&\coloneq\bvec{x}_T\,\Poly{l-1}(T).
  \end{alignedat}
\end{equation}
The $L^2$-orthogonal projectors on the spaces \eqref{eq:spaces.T} are $\Gproj{l}{T}$, $\Gcproj{l}{T}$, $\Rproj{l}{T}$, and $\Rcproj{l}{T}$, and we have
\[
\vPoly{l}(T)
= \Goly{l}(T) \oplus \cGoly{l}(T)
= \Roly{l}(T) \oplus \cRoly{l}(T).
\]


\section{DDR scheme}\label{sec:ddr}

We present a scheme based on the DDR sequence of \cite{Di-Pietro.Droniou:21*1}.
Throughout this section, we let an integer $k\ge 0$ be fixed, corresponding to the polynomial degree of the sequence.

\subsection{Spaces}

We define the following spaces for the velocity and the pressure:
\begin{align*}
  \Xcurl{h}&\coloneq\Big\{
  \begin{aligned}[t]
    \uvec{v}_h
    &=\big(
    (\bvec{v}_{\cvec{R},T},\bvec{v}_{\cvec{R},T}^\compl)_{T\in\Th},(\bvec{v}_{\cvec{R},F},\bvec{v}_{\cvec{R},F}^\compl)_{F\in\Fh}, (v_E)_{E\in\Eh}
    \big)\st
    \\
    &\qquad\text{$\bvec{v}_{\cvec{R},T}\in\Roly{k-1}(T)$ and $\bvec{v}_{\cvec{R},T}^\compl\in\cRoly{k}(T)$ for all $T\in\Th$,}
    \\
    &\qquad\text{$\bvec{v}_{\cvec{R},F}\in\Roly{k-1}(F)$ and $\bvec{v}_{\cvec{R},F}^\compl\in\cRoly{k}(F)$ for all $F\in\Fh$,}
    \\
    &\qquad\text{and $v_E\in\Poly{k}(E)$ for all $E\in\Eh$}\Big\},
  \end{aligned}
  \\ 
  \Xgrad{h}&\coloneq\Big\{
  \begin{aligned}[t]
    \underline{q}_h=\big((q_T)_{T\in\Th},(q_F)_{F\in\Fh},q_{\Eh}\big)\st
    &\text{$q_T\in \Poly{k-1}(T)$ for all $T\in\Th$,}
    \\
    &\text{$q_F\in\Poly{k-1}(F)$ for all $F\in\Fh$,}
    \\
    &\text{and $q_{\Eh}\in\Poly[\rm c]{k+1}(\Eh)$}\Big\},
  \end{aligned}
\end{align*}
where $\Poly[\rm c]{k+1}(\Eh)$ is spanned by functions that are continuous on the edge skeleton of the mesh and whose restriction to every edge $E\in\Eh$ is in $\Poly{k+1}(E)$.
Given $\bullet\in\{\GRAD,\CURL\}$ and a mesh entity $Y$ appearing in the definition of $\underline{X}_{\bullet,h}^k$, we denote by $\underline{X}_{\bullet,Y}^k$ the restriction of this space to $Y$, gathering the polynomial components attached to $Y$ and to the mesh entities on the boundary $\partial Y$ of $Y$.
Similarly, the restriction to $Y$ of an element $\underline{\xi}_h\in\underline{X}_{\bullet,h}^k$ is denoted by $\underline{\xi}_Y$ and is obtained collecting the polynomial components of $\underline{\xi}_h$ attached to $Y$ and to the mesh entities on $\partial Y$.
If $q_{\Eh}\in\Poly[c]{k+1}(\Eh)$ and $F\in\Fh$, we similarly let $q_{\EF}$ be the restriction of $q_{\Eh}$ to $\partial F=\cup_{E\in\EF}\overline{E}$.

The interpolators on the DDR spaces are defined as follows:
$\Icurl{h}: \bvec{C}^0(\overline{\Omega})\to\Xcurl{h}$ is obtained setting, for all $\bvec{v}\in\bvec{C}^0(\overline{\Omega})$,
\[
\Icurl{h}\bvec{v}\coloneq\big(
(\Rproj{k-1}{T}\bvec{v}_{|T},\Rcproj{k}{T}\bvec{v}_{|T})_{T\in\Th},
(\Rproj{k-1}{F}\bvec{v}_{{\rm t},F},\Rcproj{k}{F}\bvec{v}_{{\rm t},F})_{F\in\Fh},
(\lproj{k}{E}(\bvec{v}_{|E}\cdot\tangent_E))_{E\in\Eh}
\big),
\]
where we remind the reader that $\bvec{v}_{{\rm t},F}$ denotes the tangential trace of $\bvec{v}$ over $F$, while $\Igrad{h}:C^0(\overline{\Omega})\to\Xgrad{h}$ is such that, for all $q\in C^0(\overline{\Omega})$,
\[
\begin{gathered}
  \Igrad{h} q \coloneq \big((\lproj{k-1}{T} q_{|T})_{T\in\Th},(\lproj{k-1}{F} q_{|F})_{F\in\Fh},q_{\Eh}\big)\in\Xgrad{h}
  \\
  \text{
    where $\lproj{k-1}{E} q_{\Eh|E}=\lproj{k-1}{E} q_{|E}$ for all $E\in\Eh$
    and $q_{\Eh}(\bvec{c}_\nu)=q(\bvec{c}_\nu)$ for all $\nu\in\Vh$,
  }
\end{gathered}
\]
with $\bvec{c}_\nu$ denoting the coordinate vector of the vertex $\nu\in\Vh$.
\begin{remark}[Domain of {$\Icurl{h}$}]
  The domain of the interpolator on $\Xcurl{h}$ could also be chosen as $\left\{\bvec{v}\in\bvec{H}^s(\Omega)\st\CURL\bvec{v}\in\bvec{L}^p(\Omega)\right\}$ with $s>\frac12$ and $p>2$; see, e.g., \cite{Amrouche.Bernardi.ea:98}.
  An in-depth study of the domain of the N\'ed\'elec interpolator in the context of classical finite elements on standard meshes can be found in \cite[Chapter 16]{Ern.Guermond:21}.
  We notice, in passing, that the regularity $\bvec{u}\in\bvec{H}^2(\Omega)$ in Theorem \ref{thm:ddr:convergence} below is sufficient for $\Icurl{h}\bvec{u}$ to be well-defined, as $\bvec{H}^2(\Omega)$ is embedded into $\bvec{C}^0(\overline{\Omega})$.
\end{remark}

\subsection{Discrete vector calculus operators}

Discrete vector calculus operators are built emulating integration by parts formulas.
We recall here their definitions and refer to \cite{Di-Pietro.Droniou.ea:20,Di-Pietro.Droniou:21*1} for further details.
Following standard DDR notations, full operators that only appear in the discrete complex through projections (i.e., $\cCT$, $\cGF$, and $\cGT$ respectively defined by \eqref{eq:cCT}, \eqref{eq:cGF}, and \eqref{new:X1} below) are denoted in sans serif font.

\subsubsection{Curl}

For all $F\in\Fh$, the \emph{face curl} $\CF:\Xcurl{F}\to\Poly{k}(F)$ is such that, for all $\uvec{v}_F\in\Xcurl{F}$,
\begin{equation} \label{eq:CF}
  \int_F\CF\uvec{v}_F~r_F
  = \int_F\bvec{v}_{\cvec{R},F}\cdot\VROT_F r_F
  - \sum_{E\in\EF}\omega_{FE}\int_E v_Er_F\qquad
  \forall r_F\in\Poly{k}(F).
\end{equation}
The \emph{tangential trace} $\trFt:\Xcurl{F}\to\vPoly{k}(F)$ is such that, for all $\uvec{v}_F\in\Xcurl{F}$ and all $(r_F,\bvec{w}_F)\in\Poly{k+1}(F)\times\cRoly{k}(F)$,
\begin{equation}\label{eq:trFt}
  \int_F\trFt\uvec{v}_F\cdot(\VROT_F r_F + \bvec{w}_F)
  = \int_F\CF\uvec{v}_F~r_F
  + \sum_{E\in\EF}\omega_{FE}\int_E v_E r_F
  + \int_F\bvec{v}_{\cvec{R},F}^\compl\cdot\bvec{w}_F.
\end{equation}
For all $T\in\Th$, the \emph{element curl} $\cCT:\Xcurl{T}\to\vPoly{k}(T)$ is obtained, for all $\uvec{v}_T\in\Xcurl{T}$, by enforcing
\begin{equation} \label{eq:cCT}
  \int_T\cCT\uvec{v}_T\cdot\bvec{w}_T
  = \int_T\bvec{v}_{\cvec{R},T}\cdot\CURL\bvec{w}_T
  + \sum_{F\in\FT}\omega_{TF}\int_F\trFt\uvec{v}_F\cdot(\bvec{w}_T\times\normal_F)
  \qquad\forall\bvec{w}_T\in\vPoly{k}(T).
\end{equation}
The \emph{discrete curl} $\uCh$ maps on the following discrete counterpart of the space $\Hdiv{\Omega}$:
\begin{equation*}
  \Xdiv{h}\coloneq\Big\{
  \begin{aligned}[t]
    \uvec{w}_h
    &=\big((\bvec{w}_{\cvec{G},T},\bvec{w}_{\cvec{G},T}^\compl)_{T\in\Th}, (w_F)_{F\in\Fh}\big)\st
    \\
    &\qquad\text{$\bvec{w}_{\cvec{G},T}\in\Goly{k-1}(T)$ and $\bvec{w}_{\cvec{G},T}^\compl\in\cGoly{k}(T)$ for all $T\in\Th$,}
    \\
    &\qquad\text{and $w_F\in\Poly{k}(F)$ for all $F\in\Fh$}
    \Big\},
  \end{aligned}
\end{equation*}
The polynomial components of $\Xdiv{h}$ can be interpreted according to the interpolator $\Idiv{h}:\bvec{H}^1(\Omega)\to\Xdiv{h}$ such that, for all $\bvec{w}\in\bvec{H}^1(\Omega)$,
\[
\Idiv{h}\bvec{w}\coloneq\big(
(\Gproj{k-1}{T}\bvec{w}_{|T},\Gcproj{k}{T}\bvec{w}_{|T})_{T\in\Th},
(\lproj{k}{F}(\bvec{w}_{|F}\cdot\normal_F))_{F\in\Fh}
\big).
\]
We let $\uCh:\Xcurl{h}\to\Xdiv{h}$ be such that, for all $\uvec{v}_h\in\Xcurl{h}$,
\begin{equation}\label{eq:uCh}
  \uCh\uvec{v}_h\coloneq\big(
  \big( \Gproj{k-1}{T}\big(\cCT\uvec{v}_T\big),\Gcproj{k}{T}\big(\cCT\uvec{v}_T\big) \big)_{T\in\Th},
  ( \CF\uvec{v}_F )_{F\in\Fh}
  \big).
\end{equation}

\subsubsection{Gradient}

For any $E\in\Eh$, the \emph{edge gradient} $\GE:\Xgrad{E}\to\Poly{k}(E)$ is defined as follows:
For all $q_E\in\Xgrad{E}=\Poly{k+1}(E)$,
\begin{equation*}
  \GE q_E \coloneq q_E',
\end{equation*}
where the derivative is taken along $E$ according to the orientation of $\tangent_E$.
For any $F\in\Fh$, the \emph{face gradient} $\cGF:\Xgrad{F}\to\vPoly{k}(F)$ is such that, for all $\underline{q}_F\in\Xgrad{F}$, 
\begin{equation} \label{eq:cGF}
  \int_F\cGF\underline{q}_F\cdot\bvec{w}_F
  = -\int_F q_F\DIV_F\bvec{w}_F
  + \sum_{E\in\EF}\omega_{FE}\int_E q_{\EF}(\bvec{w}_F\cdot\normal_{FE})
  \qquad\forall\bvec{w}_F\in\vPoly{k}(F).
\end{equation}
The \emph{scalar trace} $\trF:\Xgrad{F}\to\Poly{k+1}(F)$ is such that, for all $\underline{q}_F\in\Xgrad{F}$,
\begin{equation}\label{eq:trF}
\int_F\trF\underline{q}_F\DIV_F\bvec{v}_F
= -\int_F\cGF\underline{q}_F\cdot\bvec{v}_F
+ \sum_{E\in\EF}\omega_{FE}\int_E q_{\EF}(\bvec{v}_F\cdot\normal_{FE})
\qquad\forall\bvec{v}_F\in\cRoly{k+2}(F).
\end{equation}
For all $T\in\Th$, the \emph{element gradient} $\cGT:\Xgrad{T}\to\vPoly{k}(T)$ is defined such that, for all $\underline{q}_T\in\Xgrad{T}$,
\begin{equation}\label{new:X1}
  \int_T\cGT\underline{q}_T\cdot\bvec{w}_T
  = -\int_T q_T\DIV\bvec{w}_T
  + \sum_{F\in\FT}\omega_{TF}\int_F\trF\underline{q}_F(\bvec{w}_T\cdot\normal_F)\qquad
  \forall\bvec{w}_T\in\vPoly{k}(T).
\end{equation}
Finally, the \emph{discrete gradient} $\uGh:\Xgrad{h}\to\Xcurl{h}$ is obtained collecting the projections of each local gradient on the space(s) attached to the corresponding mesh entity: For all $\underline{q}_h\in\Xgrad{h}$,
\begin{equation}\label{eq:uGh}
  \uGh\underline{q}_h\coloneq
  \begin{aligned}[t]
    \Big(    
    &\big( \Rproj{k-1}{T}\big(\cGT\underline{q}_T\big),\Rcproj{k}{T}\big(\cGT\underline{q}_T\big) \big)_{T\in\Th},
    \\
    &\big( \Rproj{k-1}{F}\big(\cGF\underline{q}_F\big),\Rcproj{k}{F}\big(\cGF\underline{q}_F\big) \big)_{F\in\Fh},
    \\
    &( \GE q_E )_{E\in\Eh}
    \Big).
    \end{aligned} 
\end{equation}
The following discrete counterpart of the property $\CURL\GRAD = \bvec{0}$ is proved in \cite[Theorem 1]{Di-Pietro.Droniou:21*1}:
\begin{equation}\label{eq:Im.uGh.subset.Ker.uCh}
  \uCh(\uGh\underline{q}_h) = \uvec{0}\qquad\forall\underline{q}_h\in\Xgrad{h}.
\end{equation}

\subsection{Discrete potentials and $L^2$-products}
\label{sec:DDRpots}

We next equip the DDR spaces with discrete $L^2$-products composed of a consistent term (equal to the $L^2$-product of discrete scalar or vector potentials) and a stabilisation term involving least-square penalisations of boundary differences.

Let $T\in\Th$.
The discrete scalar potential $\Pgrad:\Xgrad{T}\to\Poly{k+1}(T)$ is such that, for all $\underline{q}_T\in\Xgrad{T}$,
\begin{equation} \label{eq:PgradT}
\int_T\Pgrad\underline{q}_T\DIV\bvec{v}_T
= -\int_T\cGT\underline{q}_T\cdot\bvec{v}_T
+ \sum_{F\in\FT}\omega_{TF}\int_F\trF\underline{q}_F(\bvec{v}_T\cdot\normal_F)\qquad\forall\bvec{v}_T\in\cRoly{k+2}(T),
\end{equation}
with $\trF$ defined by \eqref{eq:trF}.
The discrete vector potential $\Pcurl:\Xcurl{T}\to\vPoly{k}(T)$ is such that, for all $\uvec{v}_T\in\Xcurl{T}$ and all $(\bvec{w}_T,\bvec{z}_T)\in\cGoly{k+1}(T)\times\cRoly{k}(T)$,
\begin{equation} \label{eq:PcurlT}
\int_T\Pcurl\uvec{v}_T\cdot(\CURL\bvec{w}_T + \bvec{z}_T)
= \int_T\cCT\uvec{v}_T\cdot\bvec{w}_T
- \sum_{F\in\FT}\omega_{TF}\int_F\trFt\uvec{v}_F\cdot(\bvec{w}_T\times\normal_F)
+ \int_T\bvec{v}_{\cvec{R},T}^\compl\cdot\bvec{z}_T.
\end{equation}
Finally, the discrete vector potential $\Pdiv:\Xdiv{T}\to\vPoly{k}(T)$ satisfies, for all $\uvec{w}_T\in\Xdiv{T}$ and all $(r_T,\bvec{z}_T)\in\Poly{k+1}(T)\times\cGoly{k}(T)$,
\begin{equation} \label{eq:PdivT}
\int_T\Pdiv\uvec{w}_T\cdot(\GRAD r_T + \bvec{z}_T)
= -\int_T\DT\uvec{w}_T~r_T + \sum_{F\in\FT}\omega_{TF}\int_Fw_F~r_T
+ \int_T\bvec{w}_{\cvec{G},T}^\compl\cdot\bvec{z}_T,
\end{equation}
with discrete divergence $\DT:\Xdiv{T}\to\Poly{k}(T)$ such that
\begin{equation} \label{eq:DT}
\int_T\DT\uvec{w}_T~q_T = -\int_T\bvec{v}_{\cvec{G},T}\cdot\GRAD q_T
+ \sum_{F\in\FT}\omega_{TF}\int_Fv_F~q_T\qquad\forall q_T\in\Poly{k}(T).
\end{equation}

For $(\bullet,l)\in\left\{(\GRAD,k+1),(\CURL,k),(\DIV,k)\right\}$, the discrete $L^2$-product $(\cdot,\cdot)_{\bullet,h}:\Xbullet{h}\times\Xbullet{h}\to\Real$ is such that, for all $\underline{\xi}_h,\underline{\zeta}_h\in\Xbullet{h}$,
\[
(\underline{\xi}_h,\underline{\zeta}_h)_{\bullet,h}
\coloneq\sum_{T\in\Th}\left[
\int_T\Pbullet\underline{\xi}_T\cdot\Pbullet\underline{\zeta}_T
+ \mathrm{s}_{\bullet,T}(\underline{\xi}_T,\underline{\zeta}_T)
\right],
\]
with local stabilization bilinear forms such that, for all $T\in\Th$:
For all  $(\underline{r}_T,\underline{q}_T)\in\Xgrad{T}\times\Xgrad{T}$,
\[
\begin{aligned}
  \mathrm{s}_{\GRAD,T}(\underline{r}_T,\underline{q}_T)
  &\coloneq
  \sum_{F\in\FT}h_T\int_F(\Pgrad\underline{r}_T-\trF\underline{r}_F)(\Pgrad\underline{q}_T-\trF\underline{q}_F)
  \\
  &\quad + \sum_{E\in\ET} h_T^2\int_E(\Pgrad\underline{r}_T - r_E)(\Pgrad\underline{q}_T - q_E),
\end{aligned}
\]
for all $(\uvec{w}_T,\uvec{v}_T)\in\Xcurl{T}\times\Xcurl{T}$,
\[
\begin{aligned}
  \mathrm{s}_{\CURL,T}(\uvec{w}_T,\uvec{v}_T)
  &\coloneq
  \sum_{F\in\FT}h_T\int_F \big[ (\Pcurl\uvec{w}_T)_{{\rm t},F}-\trFt\uvec{w}_F\big]\cdot\big[ (\Pcurl\uvec{v}_T)_{{\rm t},F}-\trFt\uvec{v}_F\big]
  \\
  &\quad + \sum_{E\in\ET}h_T^2\int_E\big(\Pcurl\uvec{w}_T\cdot\tangent_E-w_E\big)\big(\Pcurl\uvec{v}_T\cdot\tangent_E-v_E\big)
\end{aligned}
\]
(recall that the subscript ``${\rm t},F$'' denotes the tangential trace on $F$) and, for all $(\uvec{w}_T,\uvec{v}_T)\in\Xdiv{T}\times\Xdiv{T}$,
\[
\mathrm{s}_{\DIV,T}(\uvec{w}_T,\uvec{v}_T)
\coloneq
\sum_{F\in\FT}h_T\int_F\big(\Pdiv\uvec{w}_T\cdot\normal_F-w_F\big)\big(\Pdiv\uvec{v}_T\cdot\normal_F-v_F\big).
\]

\begin{remark}[Stabilisation]
  Other choices are possible for the DDR stabilisation bilinear forms, the main requirements being: (a) the positive-definiteness (coercivity) of the discrete $L^2$-product and (b) polynomial consistency, namely the fact that the stabilisation vanishes whenever one of its arguments is the interpolant of a polynomial of the appropriate degree ($k+1$ for $\Xgrad{h}$, $k$ for $\Xcurl{h}$ and $\Xdiv{h}$).
  It is not very difficult to derive abstract assumptions on the stabilisation along the lines of what has been done for HHO (see, e.g., \cite[Assumption 2.4]{Di-Pietro.Droniou:20}); we leave that as an exercise to the reader.
\end{remark}

\subsection{Discrete problem and convergence}\label{sec:ddr.problem}

Define the following subspace of $\Xgrad{h}$ incorporating the zero-mean value condition:
\[
  \XgradO{h}\coloneq\left\{
  \underline{q}_h\in\Xgrad{h}\st (\underline{q}_h,\Igrad{h} 1)_{\GRAD,h} = 0
  \right\}.
\]
Assuming the additional regularity $\bvec{f}\in\bvec{C}^0(\overline{\Omega})$, the DDR scheme reads:
\begin{equation}\label{eq:discrete}
  \left\{~
  \begin{aligned}
    &\text{Find $\uvec{u}_h\in\Xcurl{h}$ and $\underline{p}_h\in\XgradO{h}$ such that} \\
    &\begin{array}{r@{{}={}}ll}
    \mathrm{a}_h(\uvec{u}_h,\uvec{v}_h) + \mathrm{b}_h(\underline{p}_h,\uvec{v}_h)
    & \ell_h(\bvec{f},\uvec{v}_h)
    &\quad\forall\uvec{v}_h\in\Xcurl{h},
    \\
    -\mathrm{b}_h(\underline{q}_h,\uvec{u}_h)
    & 0
    &\quad\forall\underline{q}_h\in\XgradO{h},
    \end{array}
  \end{aligned}
  \right.
\end{equation}
where the bilinear forms $\mathrm{a}_h:\Xcurl{h}\times\Xcurl{h}\to\Real$,
$\mathrm{b}_h:\Xgrad{h}\times\Xcurl{h}\to\Real$,
and $\ell_h:\bvec{C}^0(\overline{\Omega})\times\Xcurl{h}\to\Real$ are such that,
for all $\uvec{v}_h,\uvec{w}_h\in\Xcurl{h}$, all $\underline{q}_h\in\Xgrad{h}$, and all $\bvec{g}\in\bvec{C}^0(\overline{\Omega})$,
\begin{equation}\label{eq:ah.bh.lh}
  \begin{gathered}
    \mathrm{a}_h(\uvec{w}_h,\uvec{v}_h)\coloneq(\uCh\uvec{w}_h,\uCh\uvec{v}_h)_{\DIV,h},\qquad
    \mathrm{b}_h(\underline{q}_h,\uvec{v}_h)\coloneq(\uGh\underline{q}_h,\uvec{v}_h)_{\CURL,h},
    \\
    \ell_h(\bvec{g},\uvec{v}_h)\coloneq(\Icurl{h}\bvec{g},\uvec{v}_h)_{\CURL,h}.
  \end{gathered}
\end{equation}

\begin{remark}[Discretisation of the volumetric force term]
  The following commutation property is an immediate consequence of the corresponding local version proved in \cite[Lemma 4]{Di-Pietro.Droniou:21*1}:
  \begin{equation*}
    \uGh(\Igrad{h} q) = \Icurl{h}(\GRAD q)
    \qquad\forall q\in C^1(\overline{\Omega}).
  \end{equation*}
  It follows from this relation that, for all $(\psi,\uvec{v}_h)\in C^1(\overline{\Omega})\times\Xcurl{h}$,
  \begin{equation}\label{eq:rhs:irrotational.source}
    \ell_h(\GRAD\psi,\uvec{v}_h)
    = \mathrm{b}_h(\Igrad{h}\psi,\uvec{v}_h).
  \end{equation}
\end{remark}

Since we are interested in $h$-convergence, we assume in what follows that $\Mh$ belongs to a mesh sequence that is regular in the sense of \cite[Definition 1.9]{Di-Pietro.Droniou:20}.
For $\bullet\in\{\GRAD,\CURL,\DIV\}$, we denote by $\norm[\bullet,h]{{\cdot}}$ the norm induced by the inner product $(\cdot,\cdot)_{\bullet,h}$ on the space $\Xbullet{h}$, and we additionally set, for all $(\uvec{v}_h,\underline{q}_h)\in\Xcurl{h}\times\Xgrad{h}$,
\begin{equation}\label{eq:tnorm.h}
\tnorm[h]{(\uvec{v}_h,\underline{q}_h)}
\coloneq\left(
\tnorm[\CURL,h]{\uvec{v}_h}^2
+ \tnorm[\GRAD,h]{\underline{q}_h}^2
\right)^{\frac12},
\end{equation}
with graph norms on $\Xcurl{h}$ and $\Xgrad{h}$ given by, respectively,
\begin{equation}\label{eq:tnorm.CURL.GRAD.h}
  \begin{gathered}
    \tnorm[\CURL,h]{\uvec{v}_h}\coloneq\left(
    \norm[\CURL,h]{\uvec{v}_h}^2
    + \norm[\DIV,h]{\uCh\uvec{v}_h}^2
    \right)^{\frac12},\quad  
    \tnorm[\GRAD,h]{\underline{q}_h}\coloneq\left(
    \norm[\GRAD,h]{\underline{q}_h}^2
    + \norm[\CURL,h]{\uGh\underline{q}_h}^2
    \right)^{\frac12}.
  \end{gathered}
\end{equation}
In Theorem \ref{thm:ddr:convergence} below, we compare the solution of the discrete problem \eqref{eq:discrete} with the interpolate of the solution to the continuous problem \eqref{eq:variational}.
For all $T\in\Th$, in order to account for the additional regularity required by the interpolator on $\Xcurl{T}$, we set:
For all $1\le s\le k+1$ and all $\bvec{v}\in\bvec{H}^{\max(s,2)}(T)$,
\begin{equation*}
  \seminorm[\bvec{H}^{(s,2)}(T)]{\bvec{v}}\coloneq
  \begin{cases}
    \seminorm[\bvec{H}^{1}(T)]{\bvec{v}} + h_T\seminorm[\bvec{H}^{2}(T)]{\bvec{v}} & \text{if $s=1$},
    \\
    \seminorm[\bvec{H}^{s}(T)]{\bvec{v}} & \text{if $s\ge 2$}.
  \end{cases}
\end{equation*}
Correspondingly we set, for all $\bvec{v}\in\bvec{H}^{\max(s,2)}(\Th)$,
$
\seminorm[\bvec{H}^{(s,2)}(\Th)]{\bvec{v}}\coloneq\Big(
\sum_{T\in\Th}\seminorm[\bvec{H}^{(s,2)}(T)]{\bvec{v}}^2
\Big)^{\frac12}.
$
Throughout the rest of the paper, we write $a\lesssim b$ in place of $a\le Cb$ with $C$ depending only on, and possibly not all of them, the domain $\Omega$, the polynomial degree $k$ and the mesh regularity parameters (see \cite[Definition 1.9]{Di-Pietro.Droniou:20} for the DDR method and Assumption \ref{ass:star-shaped} in Section \ref{sec:theo:VEM} for the VEM).

\begin{theorem}[Error estimate for the DDR scheme \eqref{eq:discrete}]\label{thm:ddr:convergence}
  Denote by $\bvec{u}\in\Hcurl{\Omega}\cap\Hdiv{\Omega}$ and $p\in H^1(\Omega)\cap L^2_0(\Omega)$, respectively, the velocity and pressure fields solution of the weak formulation \eqref{eq:variational}, and by $\uvec{u}_h\in\Xcurl{h}$ and $\underline{p}_h\in\XgradO{h}$ the corresponding discrete counterparts solving the DDR scheme \eqref{eq:discrete}.
  Let $1\le s \le k+1$ and assume the additional regularity
  $\bvec{u}\in\bvec{H}^2(\Omega)\cap\bvec{H}^s(\Th)$,
  $\CURL\bvec{u}\in\bvec{H}^{s+1}(\Th)$,
  $\CURL\CURL\bvec{u}\in\bvec{C}^0(\overline{\Omega})\cap\bvec{H}^{\max(s,2)}(\Th)$,
  and $p\in C^1(\overline{\Omega})$.
  Then, it holds
  \begin{multline}\label{eq:err.est}
    \tnorm[h]{(\uvec{u}_h - \Icurl{h}\bvec{u}, \underline{p}_h - \Igrad{h} p)}
    \\
    \lesssim h^s\left(
    \seminorm[\bvec{H}^{(s,2)}(\Th)]{\bvec{u}}
    + \seminorm[\bvec{H}^{s}(\Th)]{\CURL\bvec{u}}
    + \seminorm[\bvec{H}^{s+1}(\Th)]{\CURL\bvec{u}}
    + \seminorm[\bvec{H}^{(s,2)}(\Th)]{\CURL\CURL\bvec{u}}
    \right).
  \end{multline}
\end{theorem}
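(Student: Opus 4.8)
\emph{Plan.} I would follow the standard route for DDR error analysis: establish inf-sup stability of the discrete problem in the graph norm $\tnorm[h]{\cdot}$ of \eqref{eq:tnorm.h}, then apply the Third Strang Lemma (see, e.g., \cite{Di-Pietro.Droniou:20}) to reduce the estimate to a bound on the consistency error obtained by inserting the interpolates $(\Icurl{h}\bvec{u},\Igrad{h}p)$ into \eqref{eq:discrete}. Collecting the bilinear forms into $\underline{\mathcal{A}}_h((\uvec{w}_h,\underline{r}_h),(\uvec{v}_h,\underline{q}_h))\coloneq\mathrm{a}_h(\uvec{w}_h,\uvec{v}_h)+\mathrm{b}_h(\underline{r}_h,\uvec{v}_h)-\mathrm{b}_h(\underline{q}_h,\uvec{w}_h)$, this yields
\[
\tnorm[h]{(\uvec{u}_h-\Icurl{h}\bvec{u},\,\underline{p}_h-\Igrad{h}p)}
\lesssim\sup_{(\uvec{v}_h,\underline{q}_h)\neq(\uvec{0},\underline{0})}
\frac{\bigl|\ell_h(\bvec{f},\uvec{v}_h)-\underline{\mathcal{A}}_h\bigl((\Icurl{h}\bvec{u},\Igrad{h}p),(\uvec{v}_h,\underline{q}_h)\bigr)\bigr|}{\tnorm[h]{(\uvec{v}_h,\underline{q}_h)}},
\]
where the constant component of $\Igrad{h}p$ — which, because of the stabilisation, need not make $\Igrad{h}p$ lie exactly in $\XgradO{h}$ — is immaterial since $\mathrm{b}_h(\Igrad{h}1,\cdot)=(\Icurl{h}(\GRAD 1),\cdot)_{\CURL,h}=0$ by \eqref{eq:rhs:irrotational.source}.

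\emph{Stability.} By the classical theory of saddle-point problems, the required inf-sup condition reduces to (i) coercivity of $\mathrm{a}_h$ on $\Ker\mathrm{b}_h$ and (ii) an inf-sup condition for $\mathrm{b}_h$. For (i): if $\mathrm{b}_h(\underline{q}_h,\uvec{v}_h)=(\uGh\underline{q}_h,\uvec{v}_h)_{\CURL,h}=0$ for all $\underline{q}_h$, then $\uvec{v}_h$ is $(\cdot,\cdot)_{\CURL,h}$-orthogonal to $\Image\uGh=\Ker\uCh$ (exactness of the DDR complex under the trivial-topology assumption), so the discrete Poincaré inequality for the curl gives $\tnorm[\CURL,h]{\uvec{v}_h}^2\lesssim\norm[\DIV,h]{\uCh\uvec{v}_h}^2=\mathrm{a}_h(\uvec{v}_h,\uvec{v}_h)$. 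For (ii): given $\underline{q}_h\in\XgradO{h}$, test $\mathrm{b}_h$ with $\uvec{v}_h=\uGh\underline{q}_h$; since $\uCh\uGh\underline{q}_h=\uvec{0}$ by \eqref{eq:Im.uGh.subset.Ker.uCh} one has $\tnorm[\CURL,h]{\uGh\underline{q}_h}=\norm[\CURL,h]{\uGh\underline{q}_h}$ and $\mathrm{b}_h(\underline{q}_h,\uGh\underline{q}_h)=\norm[\CURL,h]{\uGh\underline{q}_h}^2$, while the discrete Poincaré inequality for the gradient on $\XgradO{h}$ yields $\tnorm[\GRAD,h]{\underline{q}_h}\lesssim\norm[\CURL,h]{\uGh\underline{q}_h}$. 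Exactness, the discrete Poincaré inequalities, and the coercivity of the three discrete $L^2$-products (built into the stabilisations) are all available from \cite{Di-Pietro.Droniou.ea:20,Di-Pietro.Droniou:21*1}.

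\emph{Consistency and pressure-robustness.} Writing $\bvec{f}=\CURL\CURL\bvec{u}+\GRAD p$, linearity of $\ell_h$ and the relation $\ell_h(\GRAD p,\uvec{v}_h)=\mathrm{b}_h(\Igrad{h}p,\uvec{v}_h)$ from \eqref{eq:rhs:irrotational.source} make the pressure contribution cancel \emph{exactly} against the $\mathrm{b}_h(\Igrad{h}p,\uvec{v}_h)$ term in $\underline{\mathcal{A}}_h$: this is the pressure-robustness mechanism, and it leaves a consistency error depending only on $\bvec{u}$. The momentum residual reduces to $\ell_h(\CURL\CURL\bvec{u},\uvec{v}_h)-\mathrm{a}_h(\Icurl{h}\bvec{u},\uvec{v}_h)$; rewriting $\mathrm{a}_h(\Icurl{h}\bvec{u},\uvec{v}_h)=(\uCh\Icurl{h}\bvec{u},\uCh\uvec{v}_h)_{\DIV,h}=(\Idiv{h}(\CURL\bvec{u}),\uCh\uvec{v}_h)_{\DIV,h}$ via the commutation property $\uCh\circ\Icurl{h}=\Idiv{h}\circ\CURL$ (valid for $\bvec{u}$ smooth enough, from the local commutation results of \cite{Di-Pietro.Droniou:21*1}), the continuous counterpart $\int_\Omega\CURL\CURL\bvec{u}\cdot\bvec{v}-\int_\Omega\CURL\bvec{u}\cdot\CURL\bvec{v}$ vanishes thanks to $\CURL\bvec{u}\times\normal=\bvec{0}$ on $\partial\Omega$, and the discrepancy is controlled by the primal/adjoint consistency estimates for $(\cdot,\cdot)_{\CURL,h}$ and $(\cdot,\cdot)_{\DIV,h}$, giving the terms in $h^s\seminorm[\bvec{H}^{(s,2)}(\Th)]{\CURL\CURL\bvec{u}}$, $h^s\seminorm[\bvec{H}^{s}(\Th)]{\CURL\bvec{u}}$ and $h^s\seminorm[\bvec{H}^{s+1}(\Th)]{\CURL\bvec{u}}$. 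The mass residual $\mathrm{b}_h(\underline{q}_h,\Icurl{h}\bvec{u})=(\uGh\underline{q}_h,\Icurl{h}\bvec{u})_{\CURL,h}$ has continuous analogue $\int_\Omega\GRAD q\cdot\bvec{u}=-\int_\Omega q\,\DIV\bvec{u}+\int_{\partial\Omega}q\,(\bvec{u}\cdot\normal)=0$ (using $\DIV\bvec{u}=0$ and $\bvec{u}\cdot\normal=0$), so combining the adjoint consistency of $(\cdot,\cdot)_{\CURL,h}$ with the identity relating the vector potential of a discrete gradient to the gradient of the scalar potential $\Pgrad$ bounds it by $h^s\seminorm[\bvec{H}^{(s,2)}(\Th)]{\bvec{u}}\,\tnorm[\GRAD,h]{\underline{q}_h}$; the global regularity $\bvec{u}\in\bvec{H}^2(\Omega)$ is needed only so that $\Icurl{h}\bvec{u}$ is well-defined. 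Summing and inserting into the Strang bound yields \eqref{eq:err.est}.

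\emph{Main obstacle.} The delicate part is the consistency analysis of the DDR $L^2$-products with \emph{sharp} regularity bookkeeping, i.e.\ showing that the mismatch between $(\Idiv{h}\bvec{w},\uCh\uvec{v}_h)_{\DIV,h}$ (resp.\ $(\Icurl{h}\bvec{z},\uvec{v}_h)_{\CURL,h}$) and the integral of $\bvec{w}$ (resp.\ $\bvec{z}$) against the appropriate discrete potential is of the claimed order under exactly the regularities appearing on the right-hand side of \eqref{eq:err.est}. This rests on approximation estimates for the potentials $\Pgrad$, $\Pcurl$, $\Pdiv$, on the polynomial consistency of the stabilisations, and on a careful split into element, face and edge contributions — all essentially available in, or minor variations of, the results of \cite{Di-Pietro.Droniou.ea:20,Di-Pietro.Droniou:21*1} — so the remaining work is largely organisational, together with verifying that the commutation identities used above hold under the minimal regularity assumed in the statement.
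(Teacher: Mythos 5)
Your proposal is correct and follows essentially the same route as the paper: Third Strang lemma plus an inf-sup estimate in the graph norm $\tnorm[h]{\cdot}$, exact cancellation of the pressure contribution through the commutation property \eqref{eq:rhs:irrotational.source}, the commutation $\uCh\circ\Icurl{h}=\Idiv{h}\circ\CURL$ for the viscous term, and the adjoint consistency of the discrete gradient (combined with $\DIV\bvec{u}=0$ and $\bvec{u}\cdot\normal=0$) for the mass residual, all quantities being bounded by the consistency results of the DDR framework. The only cosmetic difference is that you obtain stability from the classical Babu\v{s}ka--Brezzi splitting (coercivity of $\mathrm{a}_h$ on the kernel of $\mathrm{b}_h$ plus an inf-sup condition for $\mathrm{b}_h$), whereas the paper proves the inf-sup condition for the full form $\mathrm{A}_h$ directly by an explicit choice of test functions; both arguments rest on the same ingredients, namely exactness of the discrete complex and the discrete Poincar\'e inequalities.
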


\begin{proof}
  See Section \ref{sec:theo-DDR:convergence}.
\end{proof}

\begin{remark}[Pressure robustness]\label{rem:ddr:pressure.robustness}
  It can be easily checked that the substitution $\bvec{f}\gets\bvec{f}+\GRAD\psi$ with $\psi\in H^1(\Omega)$ in \eqref{eq:strong} results in $(\bvec{u},p)\gets(\bvec{u},p+\psi)$, showing that the velocity field is not affected by the irrotational component of the source term at the continuous level.
  By \eqref{eq:rhs:irrotational.source}, a similar property holds at the discrete level: the substitution $\bvec{f}\gets\bvec{f}+\GRAD\psi$ with $\psi\in C^1(\overline{\Omega})$ (the additional regularity being required by the presence of the interpolator in the definition \eqref{eq:ah.bh.lh} of $\ell_h$) results in $(\uvec{u}_h,\underline{p}_h)\gets(\bvec{u}_h,\underline{p}_h+\Igrad{h}(\psi+C))$ where $C$ is a constant which ensures that $(\Igrad{h}(\psi+C),\Igrad{h}1)_{\GRAD,h}=0$.
  This property has the important consequence that the right-hand side of the error estimate \eqref{eq:err.est} is independent of the pressure, and is therefore not affected by the substitution $\bvec{f}\gets\bvec{f}+\GRAD\psi$, showing that the DDR scheme \eqref{eq:discrete} is \emph{pressure robust} \cite{Linke.Merdon:16}.
  This property is obtained here on general meshes, for arbitrary polynomial degrees, and without resorting to submeshing.
\end{remark}

\section{VEM scheme}
\label{sec:discre1}

The second scheme we present in this paper is based on a set of Virtual Element spaces that form an exact sequence. Since the spaces are a simple modification of those presented in 
\cite{Beirao-da-Veiga.Brezzi.ea:18*2}, we will provide only a brief description and refer to the above paper for a deeper overview.
Throughout the rest of this section, the integer $k\ge 0$ will denote the polynomial degree of the sequence.

\subsection{Local spaces on faces}\label{sec:local}

We first introduce the local \emph{edge} and \emph{nodal} spaces on faces, minimal modifications of those introduced in \cite{2Dmagneto,Beirao-da-Veiga.Brezzi.ea:18*2}. These spaces can be seen as a generalization to polygons of {N\'ed\'elec elements of the first kind}. Let $F\in\Fh$ denote a mesh face.
In order to describe directly the serendipity version of the spaces (which, when compared to the standard version, requires a more complex construction but is more computationally efficient), we let
\begin{equation}\label{defbeta}
\beta_F \coloneq k+1-\eta_F \, ,
\end{equation}
where $\eta_F$ is an integer, equal to or smaller than the number of straight lines necessary to cover the boundary of $F$. A safe (but possibly not optimal) choice is $\eta_F=3$, yielding $\beta_F=k-2$. Higher values of $\beta_F$ will lead to a more efficient scheme but may not always be feasible depending on the face geometry. 
In what follows, we always assume $\eta_F\ge 1$ so that $\beta_F\le k$; note that in the case $\beta_F=k$, which represents the plain non-serendipity VEM, the conditions in the spaces \eqref{defVeS}, \eqref{defVnS} vanish and one does not need to define the serendipity projectors \eqref{newnewL}, \eqref{PiS-nod2}.
In the case $\beta_F \ge 0$ we assume that faces are convex (such condition is not needed if $\beta_F < 0$). This convexity condition simplifies the development of serendipity spaces; for the treatment of non-convex faces also in the case $\beta_F \ge 0$ we refer to \cite{Serendipity-2}. An alternative option would be instead to use an ``enhancement'', in the spirit of \cite{Projectors}, that is a simpler approach but leads to a less significant reduction in the number of degrees of freedom.

\subsubsection{Edge space on faces}

We start by defining a projector ${\mathbf \Pi}_{S,F}^{\edge}: {\cal S}^{\edge} \rightarrow\vPoly{k}(F)$, where ${\cal S}^{\edge}$ is a set of sufficiently regular functions $F\to\Real^2$, as follows:
For all $\vv \in {\cal S}^{\edge}$,
\begin{subequations}
\label{newnewL}
  \begin{align}
    &\mbox{$\int_{\partial F}[(\vv - {\mathbf \Pi}_{S,F}^{\edge} \vv)\cdot\tt_{\partial F}] [\GRAD_F p\cdot\tt_{\partial F}]  \ds= 0 \quad \forall { p} \in \Poly{k+1}(F)$},\label{defPieS1}\\
    &\mbox{$\int_{\partial F}(\vv - {\mathbf \Pi}_{S,F}^{\edge} \vv)\cdot \tt_{\partial F} \ds=0$},\label{defPieS2}\\
    &\mbox{if $k>1$: \ $\int_F \ROT_F(\vv-{\mathbf \Pi}^{\edge}_{S,F}\vv)p \df=0\quad \forall p \in \Poly[0]{\kr}(F)$} ,\label{defPieS3}
    \\
    &\mbox{if $\beta_F\ge 0$: \  $\int_{F}(\vv-{\mathbf \Pi}^{\edge}_{S,F} \vv)\cdot\xxf \: p\df =0 \quad \forall p \in\Poly{\beta_F}(F)$} ,\label{defPieS4}
  \end{align}
\end{subequations}
where $(\tt_{\partial F})_{|E} \coloneq \omega_{FE}\tt_E$ for all $E \in \EF$. Note that, if $\eta_F$ is chosen smaller than the number of straight lines necessary to cover the boundary of $F$, the above conditions are either to be intended in the least square sense or the integral in the first condition to be taken on a suitable subset of $\partial F$.
The (serendipity) edge space on $F$ is then defined as: 
\begin{multline}\label{defVeS}
  SV_{k}^{\edge}(F) \coloneq \Big\{
  \vv\in \bvec{L}^2(F)\st
  \DIV_F\vv \in \Poly{k}(F), \, \ROT_F\vv \in \Poly{k}(F),\,
  \vv \cdot \tt_E \in \Poly{k}(E)\quad\forall E \in \EF,  \\ 
  \int_{F} (\vv-{\mathbf \Pi}_{S,F}^{\edge} \vv )  \cdot \xxf\,p \df=0 \quad\forall p\in \Poly{\beta_F |\kd}(F)
  \Big\} ,
\end{multline}
where $\Poly{\beta_F |\kd}(F)$ is any space such that
$$
\Poly{\kd}(F) = \Poly{\beta_F}(F) \oplus  \Poly{\beta_F|\kd}(F) \, .
$$
The following operators constitute (once bases for the corresponding polynomial test spaces are chosen) a unisolvent set of degrees of freedom (DoFs) for $SV_{k}^{\edge}(F)$:
\begin{align}
&\bullet
\mbox{on each edge $E \in \EF$, $\int_E (\vv\cdot\tt_E) p \ds \quad \forall p \in \Poly{k}(E) $,}
\label{k-2dofe1}\\
&\bullet
\mbox{if $\beta_F\ge 0$: \ }
\int_{F}(\vv\cdot\xxf) \: p\df \quad \forall p \in\Poly{\beta_F}(F) ,
\label{k-2dofe3-sere}\\
&\bullet \mbox{if $k>0$: \ }\int_{F} \ROT_F\vv ~ p \df \quad \forall p \in \Poly[0]{k}(F) , \label{k-2dofe3}
\end{align}
where we recall that $\xxf=\xx-\bvec{c}_{F}$, and that $\Poly[0]{k}(F)$ was defined in \eqref{defPh}.
Following \cite[Eq.~(3.6)]{Beirao-da-Veiga.Brezzi.ea:18*2}, we can compute the $L^2$-orthogonal projector $\vlproj{k+1}{F}{}:SV^{\edge}_{k}(F) \rightarrow \vPoly{k+1}(F)$ using only the DoFs on $V^{\edge}_k(F)$ (that is, without the need of actually reconstructing the functions of $V^{\edge}_k(F)$).

\begin{remark}[Edge serendipity operator]
  One could modify the serendipity operator into 
  ${\mathbf \Pi}_{S,F}^{\edge}: {\cal S}^{\edge} \rightarrow \vPoly{k}(F) + \xxf^\perp\,\Poly[]{k}(F)$, 
  by increasing the test functions in \eqref{defPieS3} to $p \in \Poly[0]{k}(F)$. This would guarantee
  that the N\'ed\'elec space $\vPoly{k}(F) + \xxf^\perp\,\Poly[]{k}(F)$ is contained in $SV_{k}^{\edge}(F)$,
  and thus that the full N\'ed\'elec space of the first kind is contained 
  in ${V}^{\edge}_{k}(T)$, see \eqref{edge3} below. 
Such a change would however not improve the interpolation properties of the space with respect to Lemma \ref{prop:int-edge} below.
\end{remark}

\subsubsection{Nodal space on faces}

For the construction of the nodal serendipity space on faces we proceed as before. 
Let $\Pi^{\node}_{S,F}: {\cal S}^{\node} \rightarrow \Poly{k+1}(F)$, with ${\cal S}^{\node}$ a space of sufficiently regular functions $F\to\Real$, 
be a projector defined by: For all $q \in {\cal S}^{\node}$,
\begin{equation}\label{PiS-nod2}
\begin{aligned}
&\mbox{$\int_{\partial F} [\GRAD_F(q-\Pi^{\node}_{S,F}q)\cdot\tt_{\partial F}][\GRAD_F p\cdot\tt_{\partial F}] \ds=0\quad \forall p\in \Poly{k+1}(F)$},\\
&\mbox{$ \int_{\partial F} (q - \Pi_{S,F}^{\node} q) (\xxf\cdot\nn_{\partial F}) \ds = 0$} ,\\
&
\mbox{if $\beta_F \ge 0$: \, $\int_{F}\GRAD_F(q-\Pi^{\node}_{S,F} q)\cdot\xxf\, p\df=0\quad\forall p\in\Poly{\beta_F}(F)$},
\end{aligned}
\end{equation}
where, for each $E \in \EF$, $(\nn_{\partial F})_{|E} \coloneq{\omega_{FE}} \nn_{FE}$. The same observation as in \eqref{defPieS4} applies.
The (serendipity) nodal space of order $k+1$ on the face $F$ is then defined as:  
\begin{equation}\label{defVnS}
\begin{aligned}
SV_{k+1}^{\node}(F) \coloneq \Big\{ q \in H^1(F) : \: & q_{|E} \in \Poly{k+1}(E) \quad \forall E \subset \EF, \: \Delta_F q \in \Poly{\kd}(F) , \\
& \int_{F} (\GRAD_F q - \GRAD_F\Pi_{S,F}^{\node} q )\cdot \xxf\,p\df=0 \quad\forall p\in \Poly{\beta_F| \kd}(F) \Big\} .
\end{aligned}
\end{equation}
Note that the above conditions easily imply that functions in $SV_{k+1}^{\node}(F)$ are continuous on the boundary of $F$.
The DoFs in $SV_{k+1}^{\node}(F)$ are 
\begin{align}
& \bullet \mbox{for each vertex $\nu \in \VF$, the value $q(\bvec{c}_\nu)$, } \label{k-2dofn0}\\
& \bullet\mbox{if $k\ge 1$: \ for each edge $E \in \EF$, $\int_E q\, p\ds\quad\forall p\in\Poly{k-1}(E)$},\label{k-2dofn1}\\
& \bullet\mbox{if $\beta_F\ge 0$:}\  
\int_{F}(\GRAD_F q \cdot \xxf)\: p \df \quad \forall p \in\Poly{\beta_F}(F).
\label{k-2dofn2-sere}
\end{align}
Notice that, if $F$ is a triangle, the space $SV_{k+1}^{\node}(F)$ corresponds to the standard polynomial Finite Element space of degree $k+1$. We do not discuss here projectors in the space $SV_{k+1}^{\node}(F)$, since these will not be needed in the following.

\subsection{Local spaces and $L^2$-products on polyhedra}\label{sec:local3}

Let $T$ denote a mesh element of $\Th$, which we assume to have convex faces.
We introduce the following nodal, edge, and face (local) three-dimensional spaces, which, again, are minimal modifications of those in \cite{Beirao-da-Veiga.Brezzi.ea:18*2} (to which we refer for the proofs of the properties hereafter stated):
\begin{align}\label{nod3}
 V^{\node}_{k+1}(T) &\coloneq \Big\{ q \in H^1(T) \: : q_{|F}\in SV^{\node}_{k+1}(F)\quad\forall F\in\FT , \:
  \,\Delta\,q\in\Poly{k-1}(T)\Big\} ,
  \\ \label{edge3}
  V^{\edge}_{k}(T) &\coloneq \Big\{
  \begin{aligned}[t]
    &\vv\in\bvec{L}^2(T)\,:\DIV\vv\in \Poly{k-1}(T), \: \CURL(\CURL\vv) \in \vPoly{k}(T),
    \\
    &\quad \vv_{{\rm t},F} \in SV^{\edge}_{k}(F)\quad\forall F\in\FT,\; \vv\cdot\tt_E \mbox{ single valued on each edge } E\in\ET\Big\},
  \end{aligned}
\end{align}
where, as before, $\vv_{{\rm t},F}$ denotes the tangential trace of $\vv$ over $F$, and
\begin{equation} \label{face3}
\! {V}^{\face}_{k}(T) \!\coloneq
\! \Big\{ \ww\in\bvec{L}^2(T)\st {\DIV\ww\in \Poly{k}}(T), \, \CURL\ww\in\vPoly{k}(T),\:   \ww_{|F}\cdot\nn_F\in\Poly{k}(F)\quad\forall F \in \FT \Big\}.
\end{equation}
The following linear maps form a set of DoFs for $V^{\node}_{k+1}(T)$:
\begin{align}
& \bullet \mbox{for each vertex $\nu\in\VT$, the nodal value $q(\bvec{c}_\nu),$ } \label{dof-3dnk-1}\\
& \bullet \mbox{if $k\ge 1$: \ for each edge $E\in\ET$, $ \int_E q\, p\ds\quad\forall p\in\Poly{k-1}(E),$}\label{dof-3dnk-2}\\
&\bullet
\mbox{for each face $F\in\FT$ with $\beta_F\ge 0$, $\int_{F}(\GRAD_F q\cdot \xx_F)\: p \df \quad \forall p \in\Poly{\beta_F}(F)$,}\label{dof-3dnk-3}\\
&\bullet\mbox{$\int_{T} (\GRAD q\cdot \xxE ) \: p \dPP \quad \forall p \in\Poly{\kdP}(T).$}
\label{dof-3dnk-4}
\end{align}
In $V^{\edge}_{k}(T)$, the DoFs are
\begin{align}
& \bullet\mbox{for each edge $E\in\ET$, $\int_E (\vv\cdot\tt_E) p\ds \quad \forall p \in \Poly{k}(E) ,$ } \label{dof-3dek-1}\\
& \bullet \mbox{for each face $F\in\FT$ with $\beta_F\ge 0$, $\int_{F}({\vv_{{\rm t},F}}\cdot\xxf) \: p\df \quad \forall p \in\Poly{\beta_F}(F),$} \label{dof-3dek-2}\\
& \bullet \mbox{if $k>0$: \ for each face $F\in\FT$, $\int_{F} \ROT_F{\vv_{{\rm t},F}} \, p \df \quad \forall p \in \Poly[0]{k}(F)$} , \label{dof-3dek-3}\\
& \bullet \mbox{$ \int_{T}
(\vv\cdot\xx_{T}) p\, \dPP \quad \forall p \in \Poly{\kdP}(T)$} ,\label{dof-3dek-4} \\
& \bullet\mbox{ $\int_{T} 
(\CURL\vv)\cdot(\xx_{T}\times \bvec{p})\,\dPP \quad \forall \bvec{p} \in\vPoly{k}(T)$},  \label{dof-3dek-5}
\end{align}
where we recall that $\bvec{x}_{T} = \bvec{x} - \bvec{c}_T$.
Finally, for $V^{\face}_{k}(T)$ we have the DoFs
\begin{align}
& \bullet\mbox{for any face $F\in\FT$, $\int_F
(\ww\cdot\nn_F) p \df \quad \forall p \in \Poly{k}(F), $} \label{dof-3dfk-1}\\
& \bullet \mbox{if $k>0$: \ $\int_{T}
(\DIV \ww) p \dPP \quad \forall p \in \Poly[0]{k}(T), 
$ }\label{dof-3dfk-2} \\
& \bullet \mbox{$\int_{T}
\ww\cdot (\xx_{T}\times \bvec{p}) \dPP \quad \forall \bvec{p}\in\vPoly{k}(T)$}.
 \label{dof-3dfk-3}
\end{align}
Many of the above DoFs could equivalently be written using the polynomial subspaces \eqref{eq:spaces.F}-\eqref{eq:spaces.T} instead of using an explicit expression. We prefer here to conform to the standard notation used in the VEM literature; a bridge between the two approaches will be built in Section \ref{sec:bridge}. 
We also notice that, on tetrahedra, the spaces above have a higher number of internal DoFs than in the corresponding finite element case; one could reduce such number by applying an enhancement approach, see Remark \ref{rem:enh}.
However, here we will make no effort to reduce this number, as it is assumed that, in practice, they could be eliminated by static condensation (since they are internal to the elements).

As shown in \cite[Proposition 3.7]{Beirao-da-Veiga.Brezzi.ea:18*2}, from the above DoFs we can compute (in particular) the following $L^2$-orthogonal projections of virtual functions on polynomial spaces: 
from $V^{\edge}_{k}(T)$ to $\vPoly{k}(T)$ and
from $V^{\face}_{k}(T)$ to $\vPoly{k}(T)$ (actually, notice that we could also compute richer projections from $V^{\face}_{k}(T)$ to $\vPoly{k+1}(T)$, but this will not be used in the following). 
These projections are used to define the following scalar products for edge and face spaces mimicking the $\bvec{L}^2(T)$ product:
For $\bullet\in\{\edge,\face\}$, we let
$$
[\vv , \ww]_{V^\bullet_k(T)} \coloneq \int_T{\vlproj{k}{T}}\vv \cdot {\vlproj{k}{T}}\ww
+ s^T((I-{\vlproj{k}{T}})\vv , (I-{\vlproj{k}{T}})\ww) 
\qquad \forall (\vv,\ww) \in V^\bullet_k(T)\times V^\bullet_k(T),
$$
where the symmetric and computable bilinear form $s^T(\cdot,\cdot)$ can be taken, for instance, as
\begin{equation}\label{eq:vem:sT}
  s^T(\vv , \ww) = {h_{T}^3} \sum_i ({\rm dof}_i\{\vv\}) \: ({\rm dof}_i\{\ww\}) \qquad \forall (\vv,\ww) \in V^\bullet_k(T)\times V^\bullet_k(T),
\end{equation}
where we assume that all DoFs ${\rm dof}_i$ are scaled in order to behave (with respect to element size changes) as nodal evaluations.
It is immediate to check that, by construction,
\begin{equation}\label{consiE3k}
[\vv,\bvec{p}]_{V^\bullet_k(T)}=\int_{T} \vv\cdot \bvec{p}\dE 
\qquad \forall \vv\in V^\bullet_k(T),
\; \forall \bvec{p}\in\vPoly{k}(T).
\end{equation}

\begin{remark}[Alternative stabilisation]\label{rem:stab.VEM}
Another choice of stabilisation in $V^\edge_k(T)$ is
\[
s^T(\bvec{v},\bvec{w})=\sum_{E\in\ET}h_T^2\int_E \lproj{k}{E}(\bvec{v}\cdot\tangent_E)\lproj{k}{E}(\bvec{w}\cdot\tangent_E)
+\sum_{F\in\FT}h_T\int_F \vlproj{k+1}{F}\bvec{v}_{{\rm t},F}\cdot\vlproj{k+1}{F}\bvec{v}_{{\rm t},F}
\]
while, in $V^\face_k(T)$, we can take
\[
s^T(\bvec{v},\bvec{w})=\sum_{F\in\FT}h_T\int_F \lproj{k}{F}(\bvec{v}\cdot\normal_F)\lproj{k}{F}(\bvec{w}\cdot\normal_F)
+\int_T \Gcproj{k+1}{T}\bvec{v}\cdot \Gcproj{k+1}{T}\bvec{w}.
\]
These stabilisations are inspired by the ones considered in the DDR setting and, following the ideas in \cite[Lemma 5]{Di-Pietro.Droniou:21*1} and using discrete inverse inequalities, it can easily be checked that they yield coercive and consistent $\bvec{L}^2(T)$-like inner products.
\end{remark}

\begin{remark}[Alternative choices of degrees of freedom]
An integration by parts easily shows that the DoFs \eqref{dof-3dnk-3} could be replaced by
$\int_F \! q \, p \df$ for all $p \in\Poly{\beta_F}(F)$, and analogously the set \eqref{dof-3dnk-4} by
$\int_{T} \! q  p \dPP$ for all $p \in\Poly{\kdP}(T)$.
Knowledge on one set of DoFs implies knowledge on the other set, and vice-versa. Similarly, the set of DoFs \eqref{dof-3dek-5} could be replaced by 
$\int_{T} \! \vv \cdot \CURL(\xx_{T}\times \bvec{p}) \dPP$ for all $ \bvec{p} \in\vPoly{k}(T)$ and the set \eqref{dof-3dfk-2} 
by $\int_{T} \ww\cdot \GRAD p \dPP$ for all $p \in \Poly[0]{k}(T)$.
The advantage of the current choice is a more direct expression of the differential operators in terms of DoFs. The advantage of the alternative choice would be the reduced regularity needed in order to compute the DoF-interpolant of a generic function.
\end{remark}

\begin{remark}[Enhancement]
  \label{rem:enh}
  Note that one could apply an ``enhancement'' approach (in the elements volume), in order to reduce the number of internal DoFs. The enhancement idea, first introduced in \cite{Projectors}, is to adopt a slightly different definition of the VEM spaces in order to reduce the number of DoFs without sacrificing accuracy or computability. 
  For the spaces presented here, a particularly interesting form of enhancement can be obtained in the spirit of the DDR approach, see Section \ref{sec:bridge:comparison}.
\end{remark}

\subsection{Global spaces}\label{sec:global}

The global spaces are constructed by standard degrees of freedom gluing, as in classical Finite Elements. The scalar nodal space is conforming in $H^1(\Omega)\cap L^2_0(\Omega)$, the edge space in $\Hcurl{\Omega}$, and the face space in $\Hdiv{\Omega}$.
We set
\begin{align}\label{glo-n}
  V^{\node}_{k+1,0} &\coloneq\Big\{ q \in H^1(\Omega) \cap L^2_0(\Omega) \mbox{ such that } q_{|T} \in V^{\node}_{k+1}(T)\quad \forall T \in \Th \Big\},
  \\ \label{glo-e}
  V^{\edge}_{k} &\coloneq\Big\{ \vv \in\Hcurl{\Omega} \mbox{ such that } \vv_{|T} \in V^{\edge}_{k}(T)\quad\forall T \in \Th \Big\},
  \\ \label{globf}
  V^{\face}_{k} &\coloneq\Big\{\ww\in\Hdiv{\Omega}\mbox{ such that }\ww_{|T}\in V^{\face}_{k}(T)\quad\forall T \in \Th\Big\}.
\end{align}
The global DoFs can be trivially derived from the local ones (for instance, the space $V^{\node}_{k+1,0}$ has one DoF per vertex of $\Th$, $k$ DoFs per edge, etc.).
For $\bullet\in\{\edge,\face\}$, the global scalar products are given by
$$
[\vv,\ww]_{V^\bullet_k} \coloneq \sum_{T \in \Th} [\vv_{|T},\ww_{|T}]_{V^\bullet_k(T)} 
\qquad \forall (\vv,\ww) \in V^\bullet_k\times V^\bullet_k.
$$

\begin{remark}[Virtual exact sequence]\label{rem:ES}
  The Virtual Element spaces defined above form an exact sequence:
  $$
  \begin{tikzcd}
    \Real \arrow{r}{{\rm i}}
    & V^{\node}_{k+1,0}\arrow{r}{\GRAD}
    & V^{\edge}_{k}\arrow{r}{\CURL}
    & V^{\face}_{k}\arrow{r}{\DIV}
    & V^{\vol}_{k}\arrow{r}{0}
    & 0,
  \end{tikzcd}
  $$
  where the space $V^{ \vol}_{k} = \Poly{k}(\Th)\coloneq\left\{ q \in L^2(\Omega) \st q_{|T} \in \Poly{k}(T) \quad \forall T \in \Th \right\}$ collects broken polynomial functions on $\Th$ of total degree $\le k$.
\end{remark}
It is important to point out that the inclusions above can be also computed in practice, in the following sense.
Given the DoFs of $q\in V^{\node}_{k+1,0}$, we can compute the DoFs of $\GRAD q$ in $V^{\edge}_{k}$; given the DoFs of $\vv\in V^{\edge}_{k}$, we can compute the DoFs of $\CURL\vv$ in $V^{\face}_{k}$; from the DoFs of $\ww\in V^{\face}_{k}$ we can compute its divergence in $V^{\vol}_{k}$.
This observation also entails that we have a commuting diagram property that involves the natural interpolation operators defined through the DoFs.
We refer to \cite{Beirao-da-Veiga.Brezzi.ea:18*2} for a deeper overview on such aspects.

\subsection{Discrete problem and convergence}\label{sec:disc-pbl}

We are now able to present the VEM discretization of problem \eqref{eq:strong}. Assuming that $\ff \in \bvec{H}^s(\Omega)$ with $\CURL \ff \in \bvec{H}^s(\Omega)$ for some $s > \frac12$ and that $\ff_{|E}\cdot\tangent_E$ is integrable on any edge $E\in\Eh$ (see \cite{edgeface,edgefacegeneral}), we denote its interpolant in $V^{\edge}_{k}$ by $\ff_I = {\cal I}^{\edge}_k (\ff)$.
The VEM problem reads:
\begin{equation}\label{eq:discr-pbl}
\left\{~
\begin{aligned}
& \text{Find $\uu_h \in V^{\edge}_{k}$ and $p_h \in V^{\node}_{k+1,0}$ such that } \\
& \begin{array}{r@{{}={}}ll}
[\CURL \uu_h , \CURL \vv_h]_{V^{\face}_{k}} + [\GRAD p_h,\vv_h]_{V^{\edge}_{k}} & [\ff_I,\vv_h]_{V^{\edge}_{k}}
&\quad \forall \vv_h \in V^{\edge}_{k}, \\
 -[\GRAD q_h,\uu_h]_{V^{\edge}_{k}} & 0 &\quad \forall q_h \in V^{\node}_{k+1,0}.
\end{array}
\end{aligned}
\right.
\end{equation}
Notice that all the above scalar products are well defined thanks to the inclusion properties stemming from Remark \ref{rem:ES}.
\begin{remark}[Pressure robustness]\label{rem:PBgrad}
Whenever the loading $\ff$ is a gradient, i.e.\ $\ff = \GRAD \psi$ for some sufficiently regular $\psi$, the commuting diagram property recalled at the end of the previous section gives
  $$
  \ff_I = {\cal I}^{\edge}_k (\GRAD \psi) = \GRAD {\cal I}^{\node}_{k+1}(\psi) 
  $$
  where the operator ${\cal I}^{\node}_{k+1}$ denotes the natural DoF interpolator in $ V^{\node}_{k+1}$ (which consists in the space $V^\node_{k+1,0}$ without the zero average condition).
  Therefore, 
  reasoning as in Remark \ref{rem:ddr:pressure.robustness}, it can be checked that irrotational perturbations of the loading term have no influence on the velocity, and that the method is \emph{pressure robust} in the sense of \cite{Linke.Merdon:16}.
This will also be reflected in the convergence result of Theorem \ref{thm:vem:convergence} below, where the right-hand side of the error estimate does not depend on the pressure.
Such property is obtained here on general meshes, for arbitrary polynomial degrees, and without resorting to submeshing.
\end{remark}

To close this section, we state the main convergence result for the VEM scheme.
As for the DDR scheme, the following theorem requires a high (piecewise) regularity for the exact velocity $\uu$, but the adopted approach allows to obtain full independence of the velocity error from the pressure solution.
\begin{theorem}[Error estimate for the VEM scheme \eqref{eq:discr-pbl}]\label{thm:vem:convergence}
Denote by $\bvec{u}\in\Hcurl{\Omega}\cap\Hdiv{\Omega}$ and $p\in H^1(\Omega)\cap L^2_0(\Omega)$, respectively, the velocity and pressure fields solution of the weak formulation \eqref{eq:variational}, and by $\bvec{u}_h\in V^{\edge}_{k} $ and $p_h\in V^{\node}_{k+1,0} $ the corresponding discrete counterparts solving the VEM scheme \eqref{eq:discr-pbl}.
Assume $p \in{H}^{\gamma}(\Omega)$ with $\gamma > \frac32$, and that $\uu$, $\CURL\uu$, $\CURL\CURL\uu$, $\ff$ and $\CURL\ff$ are in $\bvec{H}^s(\Th)$ for some $\frac12 < s \le k+1$, with tangential components of $\uu$, $\CURL\CURL\uu$ and $\ff$ integrable on all edges.
Denote the interpolants $\uu_I = {\cal I}^{\edge}_k(\uu)$ and $p_I = {\cal I}^{\node}_{k+1}(p)$.
It holds
\begin{multline}\label{eq:conv.VEM}
  \norm[\Hcurl{\Omega}]{\uu_h - \uu_I} + \norm[H^1(\Omega)]{p_h - p_I}
  \\
  \lesssim h^{s} 
  \big(
  \seminorm[\Hscurl{s}{\Th}]{\uu} + \seminorm[\bvec{H}^s(\Th)]{\CURL\CURL\uu} + \seminorm[\bvec{H}^s(\Th)]{\CURL\ff}
  \big) \, .
\end{multline}
The same bound holds also for $\norm[\Hcurl{\Omega}]{\uu_h - \uu}$ and, upon the addition of the term $\,h^{s} \seminorm[{H}^{1+s}(\Th)]{p}$ to the right-hand side, also for $\norm[H^1(\Omega)]{p_h - p}$, see Corollary \ref{prop:conv2}.
\end{theorem}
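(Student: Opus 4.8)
The plan is to run the standard saddle-point (Brezzi) error analysis, with pressure robustness entering through the commuting-diagram identity of Remark~\ref{rem:PBgrad}. Throughout, boundedness of the three bilinear forms in \eqref{eq:discr-pbl} with respect to the graph norms is immediate from the consistency \eqref{consiE3k} and the coercivity of the computable stabilisations $s^T(\cdot,\cdot)$.

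I would first establish well-posedness of \eqref{eq:discr-pbl}. The inf--sup condition for $\mathrm{b}_h(q_h,\vv_h):=[\GRAD q_h,\vv_h]_{V^{\edge}_{k}}$ is essentially trivial: by exactness of the virtual complex (Remark~\ref{rem:ES}), $\GRAD q_h\in V^{\edge}_{k}$, so testing with $\vv_h=\GRAD q_h$ gives $\mathrm{b}_h(q_h,\GRAD q_h)=[\GRAD q_h,\GRAD q_h]_{V^{\edge}_{k}}$, which together with a discrete Poincar\'e inequality $\norm[H^1(\Omega)]{q_h}\lesssim[\GRAD q_h,\GRAD q_h]_{V^{\edge}_{k}}^{1/2}$ on $V^{\node}_{k+1,0}$ yields inf--sup with a mesh-independent constant. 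Coercivity of $\mathrm{a}_h(\vv_h,\vv_h)=[\CURL\vv_h,\CURL\vv_h]_{V^{\face}_{k}}$ on the kernel $K_h$ of $\mathrm{b}_h$ reduces, via \eqref{consiE3k}, to a discrete Poincar\'e--Friedrichs inequality $\norm[\Hcurl{\Omega}]{\vv_h}\lesssim\|\CURL\vv_h\|$ on $K_h$; since $K_h$ is the $[\cdot,\cdot]_{V^{\edge}_{k}}$-orthogonal complement in $V^{\edge}_{k}$ of $\Image(\GRAD)=\Ker(\CURL)$, and $\CURL$ maps $V^{\edge}_{k}$ onto the divergence-free subspace of $V^{\face}_{k}$, the operator $\CURL$ restricts to an isomorphism from $K_h$ onto that subspace, and the inequality follows by the usual finite-dimension/scaling argument. (Well-posedness of the interpolants $\uu_I={\cal I}^{\edge}_{k}(\uu)$, $p_I={\cal I}^{\node}_{k+1}(p)$ uses the stated regularity: $p\in H^\gamma(\Omega)\hookrightarrow C^0(\overline{\Omega})$ for $\gamma>\frac32$, and the edge-integrability of the tangential traces.)

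Writing $\ee_h:=\uu_h-\uu_I$ and $\epsilon_h:=p_h-p_I$, subtracting \eqref{eq:discr-pbl} (tested with $\vv_h,q_h$) from its exact counterpart yields the error system, and the abstract estimate bounds $\norm[\Hcurl{\Omega}]{\ee_h}+\norm[H^1(\Omega)]{\epsilon_h}$ by the dual norms of the two consistency functionals
\[
\mathcal{E}_1(\vv_h):=[\ff_I,\vv_h]_{V^{\edge}_{k}}-[\CURL\uu_I,\CURL\vv_h]_{V^{\face}_{k}}-[\GRAD p_I,\vv_h]_{V^{\edge}_{k}},\qquad
\mathcal{E}_2(q_h):=[\GRAD q_h,\uu_I]_{V^{\edge}_{k}}.
\]
The crux is $\mathcal{E}_1$: the momentum equation $\ff=\CURL(\CURL\uu)+\GRAD p$ and the commuting relation ${\cal I}^{\edge}_{k}(\GRAD p)=\GRAD\,{\cal I}^{\node}_{k+1}(p)=\GRAD p_I$ (Remark~\ref{rem:PBgrad}) give $\ff_I={\cal I}^{\edge}_{k}(\CURL\CURL\uu)+\GRAD p_I$, so the two $\GRAD p_I$ terms in $\mathcal{E}_1$ cancel \emph{exactly} and
\[
\mathcal{E}_1(\vv_h)=[{\cal I}^{\edge}_{k}(\CURL\CURL\uu),\vv_h]_{V^{\edge}_{k}}-[\CURL\uu_I,\CURL\vv_h]_{V^{\face}_{k}},
\]
a quantity with no pressure dependence whatsoever --- this is precisely where pressure robustness is gained. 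Using $\CURL\uu_I={\cal I}^{\face}_{k}(\CURL\uu)$ (commuting diagram for $\CURL$) and the continuous identity $\int_\Omega\CURL(\CURL\uu)\cdot\vv=\int_\Omega\CURL\uu\cdot\CURL\vv$ (valid because $\CURL\uu\times\nn=\bvec{0}$ on $\partial\Omega$) as the target, I would interpose the $\bvec{L}^2$-orthogonal projectors, invoke \eqref{consiE3k} to trade the discrete products for exact ones against polynomials, bound the stabilisation contributions on interpolation remainders, and apply the VEM interpolation estimates (Lemma~\ref{prop:int-edge} and its face/element analogues) to $\CURL\CURL\uu$ (using $\CURL\CURL\uu,\ \CURL(\CURL\CURL\uu)=\CURL\ff\in\bvec{H}^s(\Th)$) and to $\CURL\uu$, obtaining $|\mathcal{E}_1(\vv_h)|\lesssim h^s\big(\seminorm[\Hscurl{s}{\Th}]{\uu}+\seminorm[\bvec{H}^s(\Th)]{\CURL\CURL\uu}+\seminorm[\bvec{H}^s(\Th)]{\CURL\ff}\big)\norm[\Hcurl{\Omega}]{\vv_h}$. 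For $\mathcal{E}_2$, the continuous relation $\int_\Omega\GRAD q\cdot\uu=0$ (from $\DIV\uu=0$ and $\uu\cdot\nn=0$) is the target, and the same consistency-plus-approximation argument --- now using only the approximation of $\uu$ --- gives $|\mathcal{E}_2(q_h)|\lesssim h^s\seminorm[\Hscurl{s}{\Th}]{\uu}\norm[H^1(\Omega)]{q_h}$. Combining, we obtain \eqref{eq:conv.VEM} for $\norm[\Hcurl{\Omega}]{\ee_h}+\norm[H^1(\Omega)]{\epsilon_h}$; a triangle inequality with the interpolation errors $\norm[\Hcurl{\Omega}]{\uu-\uu_I}\lesssim h^s\seminorm[\Hscurl{s}{\Th}]{\uu}$ and $\norm[H^1(\Omega)]{p-p_I}\lesssim h^s\seminorm[H^{1+s}(\Th)]{p}$ then yields the estimates for $\norm[\Hcurl{\Omega}]{\uu_h-\uu}$ and $\norm[H^1(\Omega)]{p_h-p}$ of Corollary~\ref{prop:conv2}.

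The main obstacle is the consistency analysis of the discrete $\bvec{L}^2$-products. Since $\GRAD q_h$, $\CURL\vv_h$ and ${\cal I}^{\edge}_{k}(\CURL\CURL\uu)$ are genuinely virtual, $[\cdot,\cdot]_{V^{\bullet}_k}$ coincides with the exact $\bvec{L}^2$ inner product only when one argument is a polynomial of the appropriate degree, so one must carefully insert $\bvec{L}^2$-projectors, track which degrees of freedom each stabilisation "sees", and control $s^T(\cdot,\cdot)$ on interpolation remainders uniformly in $h$ --- without reintroducing any pressure dependence. A secondary but essential point is verifying that the commuting-diagram identities for the DoF-interpolators hold in the present (serendipity) spaces under the stated mild regularity and edge-integrability hypotheses; it is their \emph{exact} (not merely asymptotic) validity that turns the cancellation in $\mathcal{E}_1$ into genuine pressure robustness rather than the asymptotic version of earlier VEM schemes.
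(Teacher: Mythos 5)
Your proposal follows essentially the same route as the paper: well-posedness by the standard mixed-method argument (inf--sup obtained by testing with $\vv_h=\GRAD q_h$, coercivity of the curl form on the discrete kernel), then a consistency-error decomposition in which the pressure contribution cancels \emph{exactly} thanks to the commuting relation $\mathcal I^{\edge}_k(\GRAD p)=\GRAD\,\mathcal I^{\node}_{k+1}(p)$ of Remark~\ref{rem:PBgrad}, and finally bounds of the two remaining functionals via the consistency of the discrete $L^2$-products (the paper's Lemma~\ref{lem:scal}) combined with the interpolation estimate of Lemma~\ref{prop:int-edge}, using the continuous identities $\int_\Omega\CURL(\CURL\uu)\cdot\vv=\int_\Omega\CURL\uu\cdot\CURL\vv$ (from $\CURL\uu\times\nn=\bvec0$) and $\int_\Omega\GRAD q\cdot\uu=0$; the regularity budget ($\CURL\CURL\uu$, $\CURL\ff\in\bvec H^s(\Th)$) and the final triangle-inequality step towards Corollary~\ref{prop:conv2} also match the paper.

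One caveat: your justification of the discrete Poincar\'e inequality on the kernel $Z_h$ --- ``the usual finite-dimension/scaling argument'' applied to the isomorphism $\CURL:Z_h\to\{\text{divergence-free part of }V^\face_k\}$ --- does not actually deliver a mesh-independent constant, since $Z_h$ is a global space defined through the discrete inner product and no local scaling applies. This is precisely the nontrivial ingredient the paper invokes as Lemma~\ref{prop:coerc} (proved in the cited reference on VEM for Maxwell); you have identified the right inequality, but it must be taken from (or proved as in) that result rather than dismissed as routine. With that substitution, your argument reproduces the paper's proof.
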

\begin{proof}
  See Section \ref{sec:conv}.
\end{proof}

\begin{remark}[Regularity condition]
  By making use of the second result in Lemma \ref{prop:int-edge} below, and noticing that $\CURL\CURL\uu = {\cal P}_{\cal H}\ff$, the Helmholtz-Hodge projection of $\ff$, if $s > \frac32$ the regularity conditions can be slightly reduced to $\uu,\CURL\uu,\ff,{\cal P}_{\cal H}\ff$ in $\bvec{H}^s(\Th)$. The right-hand side in \eqref{eq:conv.VEM} changes accordingly. 
\end{remark}

\section{Numerical experiments}\label{sec:numerical.examples}

In this section we numerically test the proposed DDR and VEM approaches (the latter, for the sake of simplicity, with $\beta_F=k$ for all $F\in\Fh$). 
More specifically, we start with a numerical convergence analysis that will validate from the practical standpoint Theorems~\ref{thm:ddr:convergence} and~\ref{thm:vem:convergence} for the DDR and VEM schemes, respectively.
In Subsection~\ref{sec:test.robust} we perform a robustness analysis with respect to the strength of the pressure component. 

We consider the following discrete and continuous norms as errors indicators.
On the one hand, for the DDR scheme~\eqref{eq:discrete}, 
we compute as discrete errors the following quantities
\[
E^{\rm d}_{\bvec{u}}\coloneq\tnorm[\CURL,h]{\uvec{u}_h-\Icurl{h}\bvec{u}}
\,,\quad
E^{\rm d}_{p}\coloneq\norm[\CURL,h]{\uGh\underline{p}_h-\uGh\Igrad{h}p},
\]
where the norms $\tnorm[\CURL,h]{{\cdot}}$ and $\norm[\CURL,h]{{\cdot}}$ 
are defined in Section~\ref{sec:ddr.problem}.
Then, as continuous error indicators, we compute
\[
E^{\rm c}_{\bvec{u}}\coloneq\Big(\norm[\bvec{L}^2(\Omega)]{\Pcurl[h]\uvec{u}_h-\bvec{u}}^2+\norm[\bvec{L}^2(\Omega)]{\cCh\uvec{u}_h-\CURL\bvec{u}}^2\Big)^{\frac12}
\,,\quad
E^{\rm c}_{p}\coloneq\norm[\bvec{L}^2(\Omega)]{\cGh\underline{p}_h-\GRAD p},
\]
where $\Pcurl[h]$, $\cCh$, and $\cGh$ are the global potentials 
and operators obtained patching the local potentials and discrete vector calculus operators $\Pcurl$, $\cCT$, and $\cGT$, respectively. 

On the other hand, for the VEM scheme~\eqref{eq:discr-pbl}, 
we define the following discrete and continuous errors:
\[
E^{\rm d}_{\bvec{u}}\coloneq\Big([\![\bvec{u}_h-\bvec{u}_I]\!]_{V_k^\edge}^2+ [\![\CURL\bvec{u}_h-\CURL\bvec{u}_I]\!]_{V_k^\face}^2\Big)^{\frac12} 
\,,\quad
E^{\rm d}_{p}\coloneq[\![ \GRAD p_h - \GRAD p_I ]\!]_{V_k^\edge},
\]
where $[\![\cdot]\!]_{V_k^\bullet}$ is the global discrete $L^2$-norm corresponding to the scalar product $[\cdot,\cdot]_{V_k^\bullet}$, and
\begin{align*}
E^{\rm c}_{\bvec{u}}\coloneq{}&\Big( \norm[\bvec{L}^2(\Omega)]{\vlproj{k}{h}\bvec{u}_h-\bvec{u}}^2+\norm[\bvec{L}^2(\Omega)]{\vlproj{k}{h}\CURL\bvec{u}_h-\CURL\bvec{u}}^2 \Big)^{\frac12}
\,,\\
E^{\rm c}_{p}\coloneq{}&\norm[\bvec{L}^2(\Omega)]{\vlproj{k}{h}\GRAD p_h-\GRAD p}\,,
\end{align*}
with $\vlproj{k}{h}$ and $\lproj{k}{h}$ global projectors on the broken polynomial spaces $\vPoly{k}(\Th)$ and $\Poly{k}(\Th)$, respectively.

Independently of the scheme taken, the errors in pressure are only based on gradients.
This choice is due to the fact that the norm of the (discrete or continuous) gradient is a norm on the (discrete or continuous) pressure space.

Since the solutions we are considering are smooth,
Theorems~\ref{thm:ddr:convergence} and~\ref{thm:vem:convergence} state that 
all these errors indicators should decay as $h^{k+1}$ for both DDR and VEM.
To show this trend, we consider two families of meshes of the unit cube $\Omega=(0,1)^3$:
one composed of tetrahedra and the other one made of Voronoi cells.
Each mesh family is a sequence with decreasing meshsize.
Figure~\ref{fig:meshes} shows one mesh from each family.

\begin{figure}\centering
  \begin{minipage}{0.45\textwidth}
    \includegraphics[width=.9\textwidth, trim=2cm 2cm 2.5cm 2.4cm]{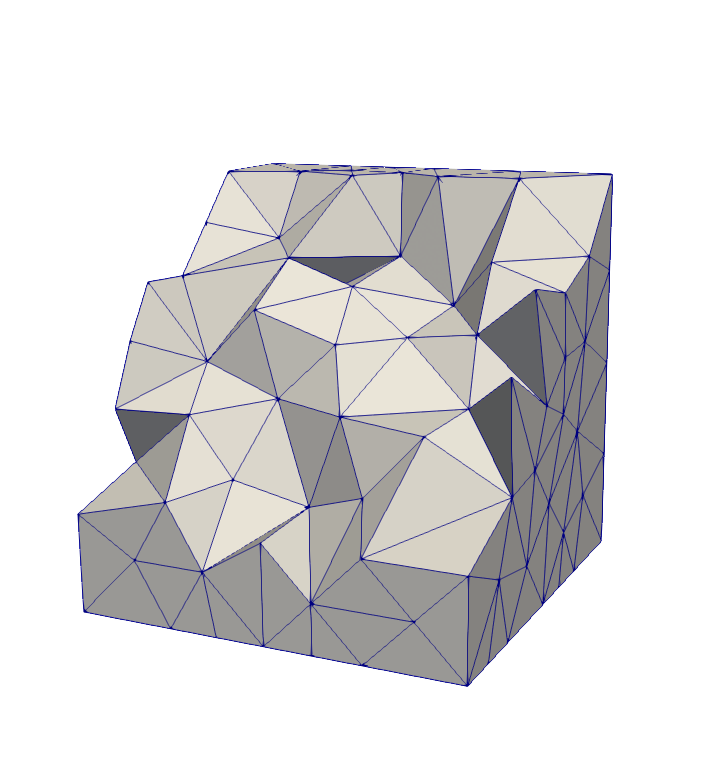}
    \subcaption{Tetrahedral mesh}
  \end{minipage}
  \hspace{0.25cm}
  \begin{minipage}{0.45\textwidth}
    \includegraphics[width=.9\textwidth, trim=2.8cm 2.4cm 2.4cm 2.4cm]{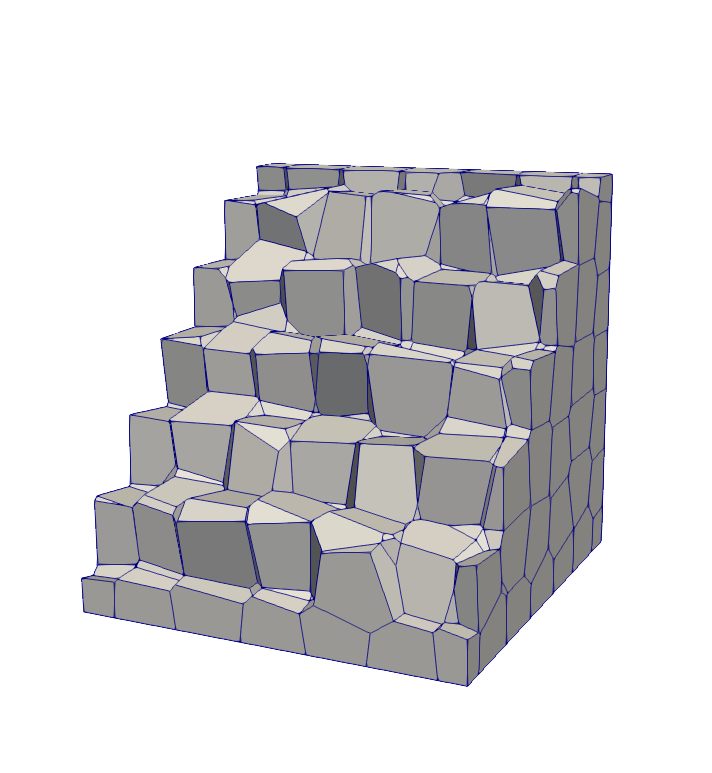}
    \subcaption{Voronoi mesh}
  \end{minipage}
  \caption{Members of mesh families used in numerical tests.}
  \label{fig:meshes}
\end{figure}

Both DDR and VEM schemes have been implemented in the open source C++ library \texttt{HArDCore3D} (see \url{https://github.com/jdroniou/HArDCore}), based on the linear algebra library \texttt{Eigen3} (see \url{http://eigen.tuxfamily.org}).
The polynomial basis functions are scaled with respect to the diameter of their associated geometric entity, and additionally orthonormalised to improve the condition number
of the resulting system in the presence of elongated faces or elements (see \cite[Section B.1.1]{Di-Pietro.Droniou:20}).
In order to solve the linear system arising from each discretization, we use the \texttt{Intel MKL PARDISO} library (see \url{https://software.intel.com/en-us/mkl})~\cite{pardiso-7.2a}. 
The VEM implementation uses the stabilisations described in Remark \ref{rem:stab.VEM}.
The zero average condition on the pressure is imposed through the introduction of a Lagrange multiplier in the space of constant functions over $\Omega$, and static condensation is applied to eliminate element degrees of freedom.
For the lowest order case of the VEM scheme, the zero-average pressure condition is enforced through a weighted node sum, since the element-wise integral is not available in that case (an alternative option would be to introduce a small modification in the discrete spaces, as in \cite{Beirao-da-Veiga.Brezzi.ea:18*1}, to make this quantity also computable for $k=0$).
For both the DDR and VEM schemes, we applied a multiplicative factor of $0.1$ to the stabilisations. This factor only impacts the magnitude of the errors (not the rates of convergences); we found by trial-and-error that a factor of $0.1$ was leading for both methods to reasonable magnitudes; deeper analysis of the optimal choice of stabilisation and/or multiplicative factor is the subject of future work.

\subsection{Convergence analysis}\label{sec:test.conv}

We build the right-hand side of problem \eqref{eq:strong} in such a way that the exact solution is 
\[
p(x,y,z)=\sin(2\pi x)\sin(2\pi y)\sin(2\pi z)\quad\mbox{ and }\quad
\bvec{u}(x,y,z)=\begin{bmatrix}
  \frac12 \sin(2\pi x)\cos(2\pi y)\cos(2\pi z)\\[.5em]
  \frac12 \cos(2\pi x)\sin(2\pi y)\cos(2\pi z)\\[.5em]
  -\cos(2\pi x)\cos(2\pi y)\sin(2\pi z)
  \end{bmatrix}.
\]
In Figures~\ref{fig:conv.tetra} and~\ref{fig:conv.voro}  we show convergence plots for both DDR and VEM schemes.
We observe an initial pre-asymptotic behavior, probably due to the fact that the first meshes are indeed quite coarse, then the convergence rates stabilize to the ones predicted by the theoretical analysis.
It should also be noted that the regularity factor of the Voronoi meshes increases quite strongly along the family (see the discussion in \cite[Section 5.1.8.2]{Di-Pietro.Droniou:20}), but that it does not seem to affect the proposed schemes, which display an apparent good robustness with respect to this factor.

Whilst the continuous errors for both schemes yield very similar values, the discrete errors are higher for the VEM scheme than the DDR scheme.
Contrary to the continuous errors, comparing the discrete errors is more tricky as they measure different quantities for each scheme, since they are based on different sets of DoFs (for instance, the VEM set of DoFs involves function derivatives as opposed to function values only for DDR).

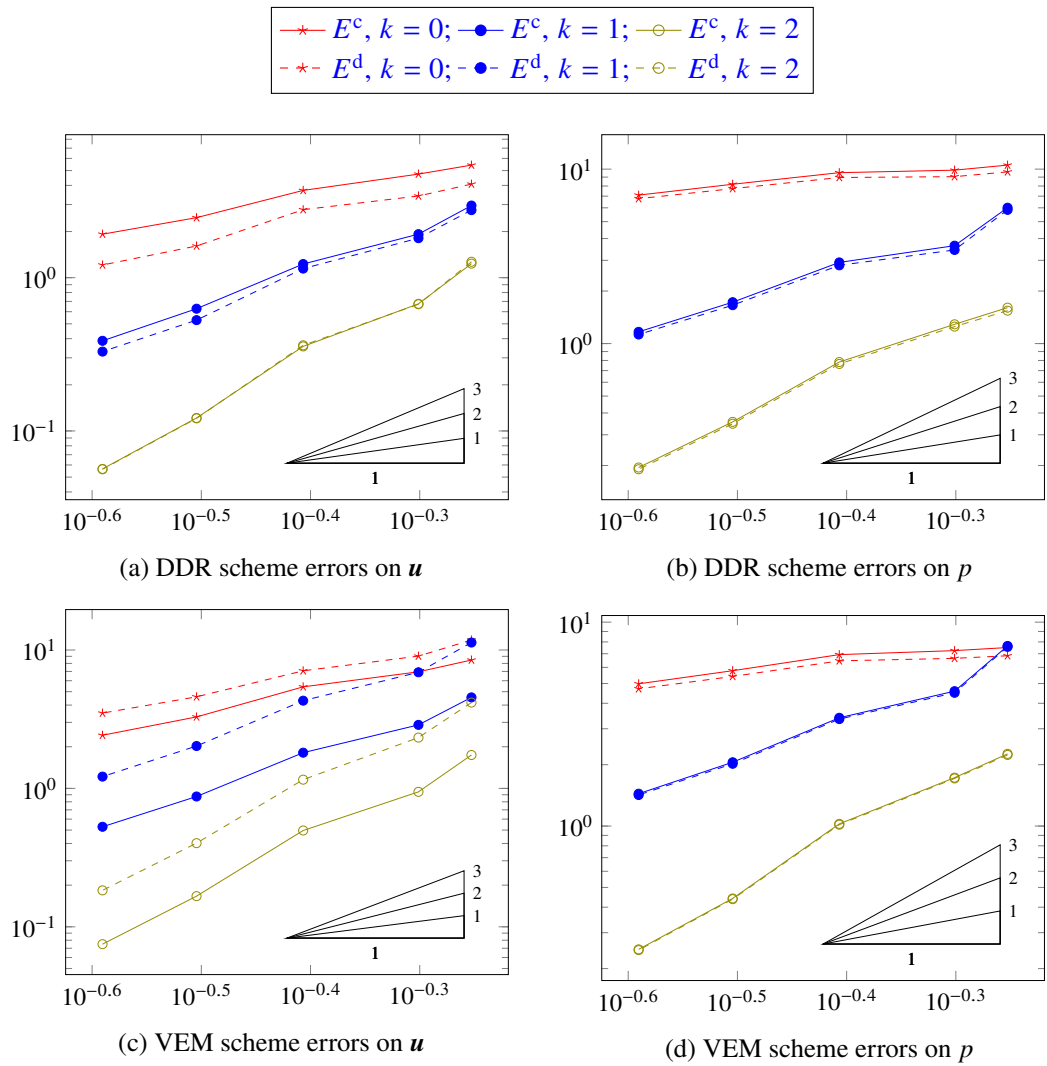
\begin{figure}\centering
  \ref{ddr.conv.tets}
  \vspace{0.50cm}\\
  \begin{minipage}{0.45\textwidth}
    \begin{tikzpicture}[scale=0.85]
      \begin{loglogaxis} [legend columns=3, legend to name=ddr.conv.tets]  
        \logLogSlopeTriangle{0.90}{0.4}{0.1}{1}{black};
        \addplot [mark=star, red] table[x=MeshSize,y=absE_cHcurlVel] {outputs/ddr/tets-trigo-k0-sca1/data_rates.dat};
        \addlegendentry{$E^{\rm c}$, $k=0$;}
        \logLogSlopeTriangle{0.90}{0.4}{0.1}{2}{black};
        \addplot [mark=*, blue] table[x=MeshSize,y=absE_cHcurlVel] {outputs/ddr/tets-trigo-k1-sca1/data_rates.dat};
        \addlegendentry{$E^{\rm c}$, $k=1$;}
        \logLogSlopeTriangle{0.90}{0.4}{0.1}{3}{black};
        \addplot [mark=o, olive] table[x=MeshSize,y=absE_cHcurlVel] {outputs/ddr/tets-trigo-k2-sca1/data_rates.dat};
        \addlegendentry{$E^{\rm c}$, $k=2$}
        \addplot [mark=star, mark options=solid, red, dashed] table[x=MeshSize,y=absE_HcurlVel] {outputs/ddr/tets-trigo-k0-sca1/data_rates.dat};
        \addlegendentry{$E^{\rm d}$, $k=0$;}
        \addplot [mark=*, mark options=solid, blue, dashed] table[x=MeshSize,y=absE_HcurlVel] {outputs/ddr/tets-trigo-k1-sca1/data_rates.dat};
        \addlegendentry{$E^{\rm d}$, $k=1$;}
        \addplot [mark=o, mark options=solid, olive, dashed] table[x=MeshSize,y=absE_HcurlVel] {outputs/ddr/tets-trigo-k2-sca1/data_rates.dat};
        \addlegendentry{$E^{\rm d}$, $k=2$}
      \end{loglogaxis}            
    \end{tikzpicture}
    \subcaption{DDR scheme errors on $\bvec{u}$}
  \end{minipage}
  \begin{minipage}{0.45\textwidth}
    \begin{tikzpicture}[scale=0.85]
      \begin{loglogaxis} 
        \logLogSlopeTriangle{0.90}{0.4}{0.1}{1}{black};
        \addplot [mark=star, red] table[x=MeshSize,y=absE_cL2GradPre] {outputs/ddr/tets-trigo-k0-sca1/data_rates.dat};
        \logLogSlopeTriangle{0.90}{0.4}{0.1}{2}{black};
        \addplot [mark=*, blue] table[x=MeshSize,y=absE_cL2GradPre] {outputs/ddr/tets-trigo-k1-sca1/data_rates.dat};
        \logLogSlopeTriangle{0.90}{0.4}{0.1}{3}{black};
        \addplot [mark=o, olive] table[x=MeshSize,y=absE_cL2GradPre] {outputs/ddr/tets-trigo-k2-sca1/data_rates.dat};
        \addplot [mark=star, mark options=solid, red, dashed] table[x=MeshSize,y=absE_L2GradPre] {outputs/ddr/tets-trigo-k0-sca1/data_rates.dat};
        \addplot [mark=*, mark options=solid, blue, dashed] table[x=MeshSize,y=absE_L2GradPre] {outputs/ddr/tets-trigo-k1-sca1/data_rates.dat};
        \addplot [mark=o, mark options=solid, olive, dashed] table[x=MeshSize,y=absE_L2GradPre] {outputs/ddr/tets-trigo-k2-sca1/data_rates.dat};
      \end{loglogaxis}            
    \end{tikzpicture}
    \subcaption{DDR scheme errors on $p$}
  \end{minipage}\\[0.5em]
  \begin{minipage}{0.45\textwidth}
    \begin{tikzpicture}[scale=0.85]
      \begin{loglogaxis} [legend columns=3, legend to name=vem.conv.tets]  
        \logLogSlopeTriangle{0.90}{0.4}{0.1}{1}{black};
        \addplot [mark=star, red] table[x=MeshSize,y=absE_cHcurlVel] {outputs/vem/tets-trigo-k0-sca1/data_rates.dat};
        \addlegendentry{$E^{\rm c}$, $k=0$;}
        \logLogSlopeTriangle{0.90}{0.4}{0.1}{2}{black};
        \addplot [mark=*, blue] table[x=MeshSize,y=absE_cHcurlVel] {outputs/vem/tets-trigo-k1-sca1/data_rates.dat};
        \addlegendentry{$E^{\rm c}$, $k=1$;}
        \logLogSlopeTriangle{0.90}{0.4}{0.1}{3}{black};
        \addplot [mark=o, olive] table[x=MeshSize,y=absE_cHcurlVel] {outputs/vem/tets-trigo-k2-sca1/data_rates.dat};
        \addlegendentry{$E^{\rm c}$, $k=2$}
        \addplot [mark=star, mark options=solid, red, dashed] table[x=MeshSize,y=absE_HcurlVel] {outputs/vem/tets-trigo-k0-sca1/data_rates.dat};
        \addlegendentry{$E^{\rm d}$, $k=0$;}
        \addplot [mark=*, mark options=solid, blue, dashed] table[x=MeshSize,y=absE_HcurlVel] {outputs/vem/tets-trigo-k1-sca1/data_rates.dat};
        \addlegendentry{$E^{\rm d}$, $k=1$;}
        \addplot [mark=o, mark options=solid, olive, dashed] table[x=MeshSize,y=absE_HcurlVel] {outputs/vem/tets-trigo-k2-sca1/data_rates.dat};
        \addlegendentry{$E^{\rm d}$, $k=2$}
      \end{loglogaxis}            
    \end{tikzpicture}
    \subcaption{VEM scheme errors on $\bvec{u}$}
  \end{minipage}
  \begin{minipage}{0.45\textwidth}
    \begin{tikzpicture}[scale=0.85]
      \begin{loglogaxis} 
        \logLogSlopeTriangle{0.90}{0.4}{0.1}{1}{black};
        \addplot [mark=star, red] table[x=MeshSize,y=absE_cL2GradPre] {outputs/vem/tets-trigo-k0-sca1/data_rates.dat};
        \logLogSlopeTriangle{0.90}{0.4}{0.1}{2}{black};
        \addplot [mark=*, blue] table[x=MeshSize,y=absE_cL2GradPre] {outputs/vem/tets-trigo-k1-sca1/data_rates.dat};
        \logLogSlopeTriangle{0.90}{0.4}{0.1}{3}{black};
        \addplot [mark=o, olive] table[x=MeshSize,y=absE_cL2GradPre] {outputs/vem/tets-trigo-k2-sca1/data_rates.dat};
        \addplot [mark=star, mark options=solid, red, dashed] table[x=MeshSize,y=absE_L2GradPre] {outputs/vem/tets-trigo-k0-sca1/data_rates.dat};
        \addplot [mark=*, mark options=solid, blue, dashed] table[x=MeshSize,y=absE_L2GradPre] {outputs/vem/tets-trigo-k1-sca1/data_rates.dat};
        \addplot [mark=o, mark options=solid, olive, dashed] table[x=MeshSize,y=absE_L2GradPre] {outputs/vem/tets-trigo-k2-sca1/data_rates.dat};
      \end{loglogaxis}            
    \end{tikzpicture}
    \subcaption{VEM scheme errors on $p$}
  \end{minipage}
  \caption{Tetrahedral meshes: errors with respect to  $h$}
  \label{fig:conv.tetra}
\end{figure}

\begin{figure}\centering
  \ref{ddr.conv.voro}
  \vspace{0.50cm}\\
  \begin{minipage}{0.45\textwidth}
    \begin{tikzpicture}[scale=0.85]
      \begin{loglogaxis} [legend columns=3, legend to name=ddr.conv.voro]  
        \logLogSlopeTriangle{0.90}{0.4}{0.1}{1}{black};
        \addplot [mark=star, red] table[x=MeshSize,y=absE_cHcurlVel] {outputs/ddr/voro-trigo-k0-sca1/data_rates.dat};
        \addlegendentry{$E^{\rm c}$, $k=0$;}
        \logLogSlopeTriangle{0.90}{0.4}{0.1}{2}{black};
        \addplot [mark=*, blue] table[x=MeshSize,y=absE_cHcurlVel] {outputs/ddr/voro-trigo-k1-sca1/data_rates.dat};
        \addlegendentry{$E^{\rm c}$, $k=1$;}
        \logLogSlopeTriangle{0.90}{0.4}{0.1}{3}{black};
        \addplot [mark=o, olive] table[x=MeshSize,y=absE_cHcurlVel] {outputs/ddr/voro-trigo-k2-sca1/data_rates.dat};
        \addlegendentry{$E^{\rm c}$, $k=2$}
        \addplot [mark=star, mark options=solid, red, dashed] table[x=MeshSize,y=absE_HcurlVel] {outputs/ddr/voro-trigo-k0-sca1/data_rates.dat};
        \addlegendentry{$E^{\rm d}$, $k=0$;}
        \addplot [mark=*, mark options=solid, blue, dashed] table[x=MeshSize,y=absE_HcurlVel] {outputs/ddr/voro-trigo-k1-sca1/data_rates.dat};
        \addlegendentry{$E^{\rm d}$, $k=1$;}
        \addplot [mark=o, mark options=solid, olive, dashed] table[x=MeshSize,y=absE_HcurlVel] {outputs/ddr/voro-trigo-k2-sca1/data_rates.dat};
        \addlegendentry{$E^{\rm d}$, $k=2$}
      \end{loglogaxis}            
    \end{tikzpicture}
    \subcaption{DDR scheme errors on $\bvec{u}$}
  \end{minipage}
  \begin{minipage}{0.45\textwidth}
    \begin{tikzpicture}[scale=0.85]
      \begin{loglogaxis} 
        \logLogSlopeTriangle{0.90}{0.4}{0.1}{1}{black};
        \addplot [mark=star, red] table[x=MeshSize,y=absE_cL2GradPre] {outputs/ddr/voro-trigo-k0-sca1/data_rates.dat};
        \logLogSlopeTriangle{0.90}{0.4}{0.1}{2}{black};
        \addplot [mark=*, blue] table[x=MeshSize,y=absE_cL2GradPre] {outputs/ddr/voro-trigo-k1-sca1/data_rates.dat};
        \logLogSlopeTriangle{0.90}{0.4}{0.1}{3}{black};
        \addplot [mark=o, olive] table[x=MeshSize,y=absE_cL2GradPre] {outputs/ddr/voro-trigo-k2-sca1/data_rates.dat};
        \addplot [mark=star, mark options=solid, red, dashed] table[x=MeshSize,y=absE_L2GradPre] {outputs/ddr/voro-trigo-k0-sca1/data_rates.dat};
        \addplot [mark=*, mark options=solid, blue, dashed] table[x=MeshSize,y=absE_L2GradPre] {outputs/ddr/voro-trigo-k1-sca1/data_rates.dat};
        \addplot [mark=o, mark options=solid, olive, dashed] table[x=MeshSize,y=absE_L2GradPre] {outputs/ddr/voro-trigo-k2-sca1/data_rates.dat};
      \end{loglogaxis}            
    \end{tikzpicture}
    \subcaption{DDR scheme errors on $p$}
  \end{minipage}\\[0.5em]
  \begin{minipage}{0.45\textwidth}
    \begin{tikzpicture}[scale=0.85]
      \begin{loglogaxis} [legend columns=3, legend to name=vem.conv.voro]  
        \logLogSlopeTriangle{0.90}{0.4}{0.1}{1}{black};
        \addplot [mark=star, red] table[x=MeshSize,y=absE_cHcurlVel] {outputs/vem/voro-trigo-k0-sca1/data_rates.dat};
        \addlegendentry{$E^{\rm c}$, $k=0$;}
        \logLogSlopeTriangle{0.90}{0.4}{0.1}{2}{black};
        \addplot [mark=*, blue] table[x=MeshSize,y=absE_cHcurlVel] {outputs/vem/voro-trigo-k1-sca1/data_rates.dat};
        \addlegendentry{$E^{\rm c}$, $k=1$;}
        \logLogSlopeTriangle{0.90}{0.4}{0.1}{3}{black};
        \addplot [mark=o, olive] table[x=MeshSize,y=absE_cHcurlVel] {outputs/vem/voro-trigo-k2-sca1/data_rates.dat};
        \addlegendentry{$E^{\rm c}$, $k=2$}
        \addplot [mark=star, mark options=solid, red, dashed] table[x=MeshSize,y=absE_HcurlVel] {outputs/vem/voro-trigo-k0-sca1/data_rates.dat};
        \addlegendentry{$E^{\rm d}$, $k=0$;}
        \addplot [mark=*, mark options=solid, blue, dashed] table[x=MeshSize,y=absE_HcurlVel] {outputs/vem/voro-trigo-k1-sca1/data_rates.dat};
        \addlegendentry{$E^{\rm d}$, $k=1$;}
        \addplot [mark=o, mark options=solid, olive, dashed] table[x=MeshSize,y=absE_HcurlVel] {outputs/vem/voro-trigo-k2-sca1/data_rates.dat};
        \addlegendentry{$E^{\rm d}$, $k=2$}
      \end{loglogaxis}            
    \end{tikzpicture}
    \subcaption{VEM scheme errors on $\bvec{u}$}
  \end{minipage}
  \begin{minipage}{0.45\textwidth}
    \begin{tikzpicture}[scale=0.85]
      \begin{loglogaxis} 
        \logLogSlopeTriangle{0.90}{0.4}{0.1}{1}{black};
        \addplot [mark=star, red] table[x=MeshSize,y=absE_cL2GradPre] {outputs/vem/voro-trigo-k0-sca1/data_rates.dat};
        \logLogSlopeTriangle{0.90}{0.4}{0.1}{2}{black};
        \addplot [mark=*, blue] table[x=MeshSize,y=absE_cL2GradPre] {outputs/vem/voro-trigo-k1-sca1/data_rates.dat};
        \logLogSlopeTriangle{0.90}{0.4}{0.1}{3}{black};
        \addplot [mark=o, olive] table[x=MeshSize,y=absE_cL2GradPre] {outputs/vem/voro-trigo-k2-sca1/data_rates.dat};
        \addplot [mark=star, mark options=solid, red, dashed] table[x=MeshSize,y=absE_L2GradPre] {outputs/vem/voro-trigo-k0-sca1/data_rates.dat};
        \addplot [mark=*, mark options=solid, blue, dashed] table[x=MeshSize,y=absE_L2GradPre] {outputs/vem/voro-trigo-k1-sca1/data_rates.dat};
        \addplot [mark=o, mark options=solid, olive, dashed] table[x=MeshSize,y=absE_L2GradPre] {outputs/vem/voro-trigo-k2-sca1/data_rates.dat};
      \end{loglogaxis}            
    \end{tikzpicture}
    \subcaption{VEM scheme errors on $p$}
  \end{minipage}
  \caption{Voronoi meshes: errors with respect to  $h$}
  \label{fig:conv.voro}
\end{figure}

\subsection{Robustness test}\label{sec:test.robust}

In this section we are interested in showing that the proposed schemes are robust with respect to the magnitude of the pressure field $p$.
To achieve this goal, we build the right-hand side of the problem defined in \eqref{eq:strong} in such a way that the velocity field is the same as in the previous example, while the pressure field is
\[
p(x,y,z)=\lambda\sin(2\pi x)\sin(2\pi y)\sin(2\pi z)\,,
\]
where the parameter $\lambda\in\Real^+$ controls the magnitude of the irrotational part of the source term.
We fix $\lambda=10^5$ and we run a convergence test for both discretization schemes on each type of meshes. 
Figures~\ref{fig:conv.tetra5} and~\ref{fig:conv.voro5} present the results of this analysis.
First of all, we observe that the expected convergence rates are recovered for all measured quantities.
Furthermore, if comparing the figures with the corresponding ones for $\lambda=1$, i.e., 
Figure~\ref{fig:conv.tetra} with~\ref{fig:conv.tetra5} and Figure~\ref{fig:conv.voro} with~\ref{fig:conv.voro5},
we can appreciate the robustness of the proposed schemes with respect to the magnitude of $p$.
Specifically, the errors on the velocity field $\bvec{u}$ seem to be unaffected by the magnitude of $p$, as expected due to the pressure robustness of the schemes. Such observation applies also to the discrete pressure error $E^{\rm d}_{p}$, again in line with the theoretical results.
On the other hand, $E^{\rm c}_{p}$ grows approximately by a factor corresponding to the value of $\lambda$, which is again expected since we consider absolute errors and the continuous error estimates depend (linearly) on $p$.

\begin{figure}\centering
  \ref{robustness.tets}
  \vspace{0.50cm}\\
  \begin{minipage}{0.45\textwidth}
    \begin{tikzpicture}[scale=0.85]
      \begin{loglogaxis} [legend columns=3, legend to name=robustness.tets]  
        \logLogSlopeTriangle{0.90}{0.4}{0.1}{1}{black};
        \addplot [mark=star, red] table[x=MeshSize,y=absE_cHcurlVel] {outputs/ddr/tets-trigo-k0-sca1e5/data_rates.dat};
        \addlegendentry{$E^{\rm c}$, $k=0$;}
        \logLogSlopeTriangle{0.90}{0.4}{0.1}{2}{black};
        \addplot [mark=*, blue] table[x=MeshSize,y=absE_cHcurlVel] {outputs/ddr/tets-trigo-k1-sca1e5/data_rates.dat};
        \addlegendentry{$E^{\rm c}$, $k=1$;}
        \logLogSlopeTriangle{0.90}{0.4}{0.1}{3}{black};
        \addplot [mark=o, olive] table[x=MeshSize,y=absE_cHcurlVel] {outputs/ddr/tets-trigo-k2-sca1e5/data_rates.dat};
        \addlegendentry{$E^{\rm c}$, $k=2$}
        \addplot [mark=star, mark options=solid, red, dashed] table[x=MeshSize,y=absE_HcurlVel] {outputs/ddr/tets-trigo-k0-sca1e5/data_rates.dat};
        \addlegendentry{$E^{\rm d}$, $k=0$;}
        \addplot [mark=*, mark options=solid, blue, dashed] table[x=MeshSize,y=absE_HcurlVel] {outputs/ddr/tets-trigo-k1-sca1e5/data_rates.dat};
        \addlegendentry{$E^{\rm d}$, $k=1$;}
        \addplot [mark=o, mark options=solid, olive, dashed] table[x=MeshSize,y=absE_HcurlVel] {outputs/ddr/tets-trigo-k2-sca1e5/data_rates.dat};
        \addlegendentry{$E^{\rm d}$, $k=2$}
      \end{loglogaxis}            
    \end{tikzpicture}
    \subcaption{DDR scheme errors on $\bvec{u}$}
  \end{minipage}
  \begin{minipage}{0.45\textwidth}
    \begin{tikzpicture}[scale=0.85]
      \begin{loglogaxis} 
        \logLogSlopeTriangle{0.90}{0.4}{0.1}{1}{black};
        \addplot [mark=star, red] table[x=MeshSize,y=absE_cL2GradPre] {outputs/ddr/tets-trigo-k0-sca1e5/data_rates.dat};
        \logLogSlopeTriangle{0.90}{0.4}{0.1}{2}{black};
        \addplot [mark=*, blue] table[x=MeshSize,y=absE_cL2GradPre] {outputs/ddr/tets-trigo-k1-sca1e5/data_rates.dat};
        \logLogSlopeTriangle{0.90}{0.4}{0.1}{3}{black};
        \addplot [mark=o, olive] table[x=MeshSize,y=absE_cL2GradPre] {outputs/ddr/tets-trigo-k2-sca1e5/data_rates.dat};
        \addplot [mark=star, mark options=solid, red, dashed] table[x=MeshSize,y=absE_L2GradPre] {outputs/ddr/tets-trigo-k0-sca1e5/data_rates.dat};
        \addplot [mark=*,    mark options=solid, blue, dashed] table[x=MeshSize,y=absE_L2GradPre] {outputs/ddr/tets-trigo-k1-sca1e5/data_rates.dat};
        \addplot [mark=o,    mark options=solid, olive, dashed] table[x=MeshSize,y=absE_L2GradPre] {outputs/ddr/tets-trigo-k2-sca1e5/data_rates.dat};
      \end{loglogaxis}            
    \end{tikzpicture}
    \subcaption{DDR scheme errors on $p$}
  \end{minipage}\\[0.5em]
  \begin{minipage}{0.45\textwidth}
    \begin{tikzpicture}[scale=0.85]
      \begin{loglogaxis} [legend columns=3, legend to name=vem.robustness.tets]  
        \logLogSlopeTriangle{0.90}{0.4}{0.1}{1}{black};
        \addplot [mark=star, red] table[x=MeshSize,y=absE_cHcurlVel] {outputs/vem/tets-trigo-k0-sca1e5/data_rates.dat};
        \addlegendentry{$E^{\rm c}$, $k=0$;}
        \logLogSlopeTriangle{0.90}{0.4}{0.1}{2}{black};
        \addplot [mark=*, blue] table[x=MeshSize,y=absE_cHcurlVel] {outputs/vem/tets-trigo-k1-sca1e5/data_rates.dat};
        \addlegendentry{$E^{\rm c}$, $k=1$;}
        \logLogSlopeTriangle{0.90}{0.4}{0.1}{3}{black};
        \addplot [mark=o, olive] table[x=MeshSize,y=absE_cHcurlVel] {outputs/vem/tets-trigo-k2-sca1e5/data_rates.dat};
        \addlegendentry{$E^{\rm c}$, $k=2$}
        \addplot [mark=star, mark options=solid, red, dashed] table[x=MeshSize,y=absE_HcurlVel] {outputs/vem/tets-trigo-k0-sca1e5/data_rates.dat};
        \addlegendentry{$E^{\rm d}$, $k=0$;}
        \addplot [mark=*, mark options=solid, blue, dashed] table[x=MeshSize,y=absE_HcurlVel] {outputs/vem/tets-trigo-k1-sca1e5/data_rates.dat};
        \addlegendentry{$E^{\rm d}$, $k=1$;}
        \addplot [mark=o, mark options=solid, olive, dashed] table[x=MeshSize,y=absE_HcurlVel] {outputs/vem/tets-trigo-k2-sca1e5/data_rates.dat};
        \addlegendentry{$E^{\rm d}$, $k=2$}
      \end{loglogaxis}            
    \end{tikzpicture}
    \subcaption{VEM scheme errors on $\bvec{u}$}
  \end{minipage}
  \begin{minipage}{0.45\textwidth}
    \begin{tikzpicture}[scale=0.85]
      \begin{loglogaxis} 
        \logLogSlopeTriangle{0.90}{0.4}{0.1}{1}{black};
        \addplot [mark=star, red] table[x=MeshSize,y=absE_cL2GradPre] {outputs/vem/tets-trigo-k0-sca1e5/data_rates.dat};
        \logLogSlopeTriangle{0.90}{0.4}{0.1}{2}{black};
        \addplot [mark=*, blue] table[x=MeshSize,y=absE_cL2GradPre] {outputs/vem/tets-trigo-k1-sca1e5/data_rates.dat};
        \logLogSlopeTriangle{0.90}{0.4}{0.1}{3}{black};
        \addplot [mark=o, olive] table[x=MeshSize,y=absE_cL2GradPre] {outputs/vem/tets-trigo-k2-sca1e5/data_rates.dat};
        \addplot [mark=star, mark options=solid, red,   dashed] table[x=MeshSize,y=absE_L2GradPre] {outputs/vem/tets-trigo-k0-sca1e5/data_rates.dat};
        \addplot [mark=*,    mark options=solid, blue,  dashed] table[x=MeshSize,y=absE_L2GradPre] {outputs/vem/tets-trigo-k1-sca1e5/data_rates.dat};
        \addplot [mark=o,    mark options=solid, olive, dashed] table[x=MeshSize,y=absE_L2GradPre] {outputs/vem/tets-trigo-k2-sca1e5/data_rates.dat};
      \end{loglogaxis}            
    \end{tikzpicture}
    \subcaption{VEM scheme errors on $p$}
  \end{minipage}
  \caption{Tetrahedral meshes, $\lambda=10^5$: errors with respect to  $h$}
  \label{fig:conv.tetra5}
\end{figure}

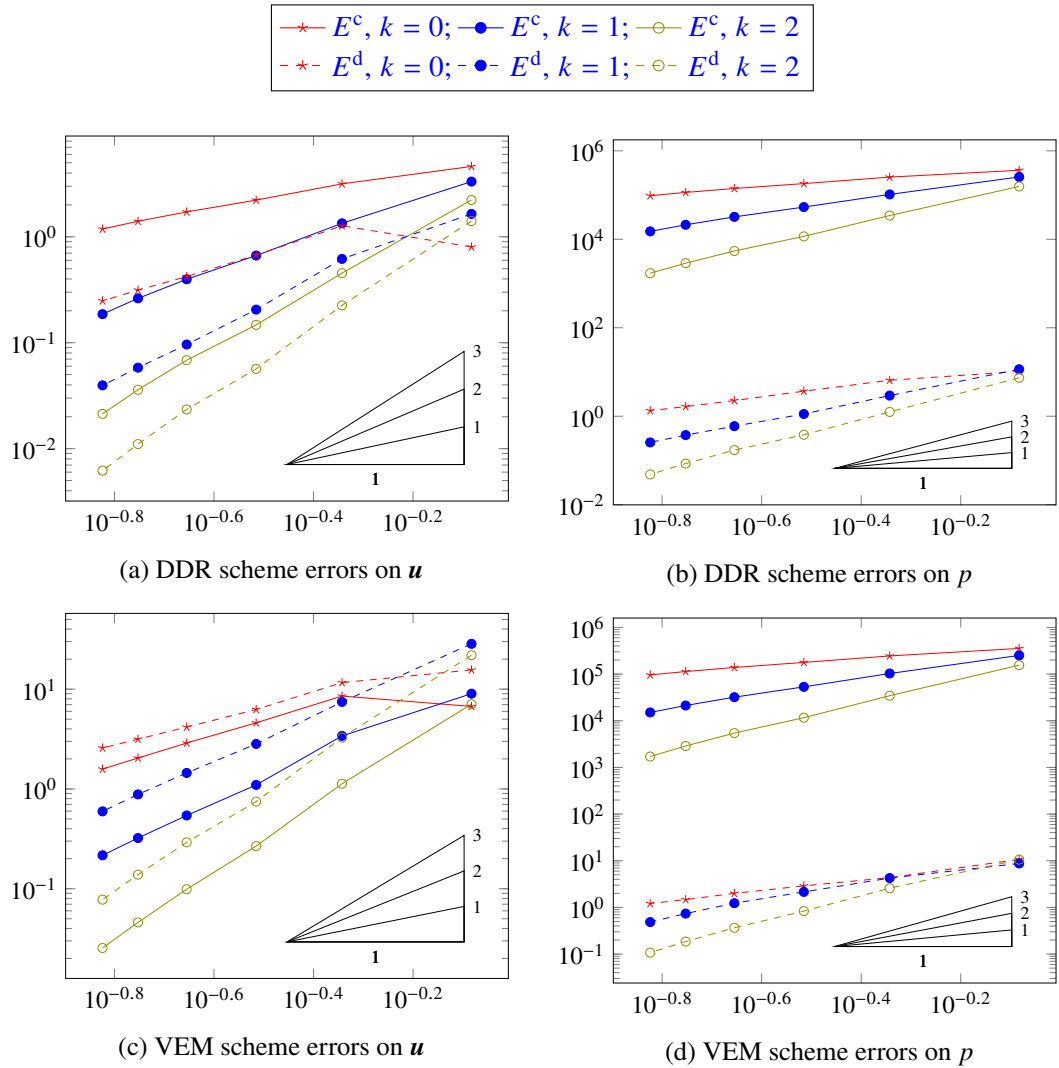
\begin{figure}\centering
  \ref{robustness.voro}
  \vspace{0.50cm}\\
  \begin{minipage}{0.45\textwidth}
    \begin{tikzpicture}[scale=0.85]
      \begin{loglogaxis} [legend columns=3, legend to name=robustness.voro]  
        \logLogSlopeTriangle{0.90}{0.4}{0.1}{1}{black};
        \addplot [mark=star, red] table[x=MeshSize,y=absE_cHcurlVel] {outputs/ddr/voro-trigo-k0-sca1e5/data_rates.dat};
        \addlegendentry{$E^{\rm c}$, $k=0$;}
        \logLogSlopeTriangle{0.90}{0.4}{0.1}{2}{black};
        \addplot [mark=*, blue] table[x=MeshSize,y=absE_cHcurlVel] {outputs/ddr/voro-trigo-k1-sca1e5/data_rates.dat};
        \addlegendentry{$E^{\rm c}$, $k=1$;}
        \logLogSlopeTriangle{0.90}{0.4}{0.1}{3}{black};
        \addplot [mark=o, olive] table[x=MeshSize,y=absE_cHcurlVel] {outputs/ddr/voro-trigo-k2-sca1e5/data_rates.dat};
        \addlegendentry{$E^{\rm c}$, $k=2$}
        \addplot [mark=star, mark options=solid, red, dashed] table[x=MeshSize,y=absE_HcurlVel] {outputs/ddr/voro-trigo-k0-sca1e5/data_rates.dat};
        \addlegendentry{$E^{\rm d}$, $k=0$;}
        \addplot [mark=*, mark options=solid, blue, dashed] table[x=MeshSize,y=absE_HcurlVel] {outputs/ddr/voro-trigo-k1-sca1e5/data_rates.dat};
        \addlegendentry{$E^{\rm d}$, $k=1$;}
        \addplot [mark=o, mark options=solid, olive, dashed] table[x=MeshSize,y=absE_HcurlVel] {outputs/ddr/voro-trigo-k2-sca1e5/data_rates.dat};
        \addlegendentry{$E^{\rm d}$, $k=2$}
      \end{loglogaxis}            
    \end{tikzpicture}
    \subcaption{DDR scheme errors on $\bvec{u}$}
  \end{minipage}
  \begin{minipage}{0.45\textwidth}
    \begin{tikzpicture}[scale=0.85]
      \begin{loglogaxis} 
        \logLogSlopeTriangle{0.90}{0.4}{0.1}{1}{black};
        \addplot [mark=star, red] table[x=MeshSize,y=absE_cL2GradPre] {outputs/ddr/voro-trigo-k0-sca1e5/data_rates.dat};
        \logLogSlopeTriangle{0.90}{0.4}{0.1}{2}{black};
        \addplot [mark=*, blue] table[x=MeshSize,y=absE_cL2GradPre] {outputs/ddr/voro-trigo-k1-sca1e5/data_rates.dat};
        \logLogSlopeTriangle{0.90}{0.4}{0.1}{3}{black};
        \addplot [mark=o, olive] table[x=MeshSize,y=absE_cL2GradPre] {outputs/ddr/voro-trigo-k2-sca1e5/data_rates.dat};
        \addplot [mark=star, mark options=solid, red,   dashed] table[x=MeshSize,y=absE_L2GradPre] {outputs/ddr/voro-trigo-k0-sca1e5/data_rates.dat};
        \addplot [mark=*,    mark options=solid, blue,  dashed] table[x=MeshSize,y=absE_L2GradPre] {outputs/ddr/voro-trigo-k1-sca1e5/data_rates.dat};
        \addplot [mark=o,    mark options=solid, olive, dashed] table[x=MeshSize,y=absE_L2GradPre] {outputs/ddr/voro-trigo-k2-sca1e5/data_rates.dat};
      \end{loglogaxis}            
    \end{tikzpicture}
    \subcaption{DDR scheme errors on $p$}
  \end{minipage}\\[0.5em]
  \begin{minipage}{0.45\textwidth}
    \begin{tikzpicture}[scale=0.85]
      \begin{loglogaxis} [legend columns=3, legend to name=vem.robustness.voro]  
        \logLogSlopeTriangle{0.90}{0.4}{0.1}{1}{black};
        \addplot [mark=star, red] table[x=MeshSize,y=absE_cHcurlVel] {outputs/vem/voro-trigo-k0-sca1e5/data_rates.dat};
        \addlegendentry{$E^{\rm c}$, $k=0$;}
        \logLogSlopeTriangle{0.90}{0.4}{0.1}{2}{black};
        \addplot [mark=*, blue] table[x=MeshSize,y=absE_cHcurlVel] {outputs/vem/voro-trigo-k1-sca1e5/data_rates.dat};
        \addlegendentry{$E^{\rm c}$, $k=1$;}
        \logLogSlopeTriangle{0.90}{0.4}{0.1}{3}{black};
        \addplot [mark=o, olive] table[x=MeshSize,y=absE_cHcurlVel] {outputs/vem/voro-trigo-k2-sca1e5/data_rates.dat};
        \addlegendentry{$E^{\rm c}$, $k=2$}
        \addplot [mark=star, mark options=solid, red, dashed] table[x=MeshSize,y=absE_HcurlVel] {outputs/vem/voro-trigo-k0-sca1e5/data_rates.dat};
        \addlegendentry{$E^{\rm d}$, $k=0$;}
        \addplot [mark=*, mark options=solid, blue, dashed] table[x=MeshSize,y=absE_HcurlVel] {outputs/vem/voro-trigo-k1-sca1e5/data_rates.dat};
        \addlegendentry{$E^{\rm d}$, $k=1$;}
        \addplot [mark=o, mark options=solid, olive, dashed] table[x=MeshSize,y=absE_HcurlVel] {outputs/vem/voro-trigo-k2-sca1e5/data_rates.dat};
        \addlegendentry{$E^{\rm d}$, $k=2$}
      \end{loglogaxis}            
    \end{tikzpicture}
    \subcaption{VEM scheme errors on $\bvec{u}$}
  \end{minipage}
  \begin{minipage}{0.45\textwidth}
    \begin{tikzpicture}[scale=0.85]
      \begin{loglogaxis} 
        \logLogSlopeTriangle{0.90}{0.4}{0.1}{1}{black};
        \addplot [mark=star, red] table[x=MeshSize,y=absE_cL2GradPre] {outputs/vem/voro-trigo-k0-sca1e5/data_rates.dat};
        \logLogSlopeTriangle{0.90}{0.4}{0.1}{2}{black};
        \addplot [mark=*, blue] table[x=MeshSize,y=absE_cL2GradPre] {outputs/vem/voro-trigo-k1-sca1e5/data_rates.dat};
        \logLogSlopeTriangle{0.90}{0.4}{0.1}{3}{black};
        \addplot [mark=o, olive] table[x=MeshSize,y=absE_cL2GradPre] {outputs/vem/voro-trigo-k2-sca1e5/data_rates.dat};
        \addplot [mark=star, mark options=solid, red,   dashed] table[x=MeshSize,y=absE_L2GradPre] {outputs/vem/voro-trigo-k0-sca1e5/data_rates.dat};
        \addplot [mark=*,    mark options=solid, blue,  dashed] table[x=MeshSize,y=absE_L2GradPre] {outputs/vem/voro-trigo-k1-sca1e5/data_rates.dat};
        \addplot [mark=o,    mark options=solid, olive, dashed] table[x=MeshSize,y=absE_L2GradPre] {outputs/vem/voro-trigo-k2-sca1e5/data_rates.dat};
      \end{loglogaxis}            
    \end{tikzpicture}
    \subcaption{VEM scheme errors on $p$}
  \end{minipage}
  \caption{Voronoi meshes, $\lambda=10^5$: errors with respect to  $h$}
  \label{fig:conv.voro5}
\end{figure}

\section{Bridging the VEM and DDR approaches}
\label{sec:bridge}

In this section we bridge the VEM and DDR approaches. The bridge VEM$\to$DDR consists in constructing, for each virtual space $V^\bullet_l$, $(\bullet,l)\in\{(\node,k+1),(\edge,k),(\face,k)\}$, a fully discrete space $\underline{V}^\bullet_l$ of vectors of polynomials, with the DoFs (interpreted as providing polynomials through the moments they describe) creating isomorphisms $V^\bullet_l\stackrel{\approx}{\to}\underline{V}^\bullet_l$; we also construct discrete operators, between the various $\underline{V}^\bullet_l$ spaces, that commute with DoF maps and the corresponding operators between the virtual spaces $V^\bullet_l$.
Similarly, to create the bridge DDR$\to$VEM, we construct for each DDR space $\underline{X}_\bullet^k$, $\bullet\in\{\GRAD,\CURL,\DIV\}$, a space of virtual functions $X_\bullet^k$ with DoFs that create isomorphisms $X_\bullet^k\stackrel{\approx}{\to}\underline{X}_\bullet^k$, and such that the discrete DDR operators commute, through these isomorphisms, with the corresponding continuous differential operators between the virtual spaces.

\subsection{DDR interpretation of the VEM scheme}

The DDR interpretation of the VEM approach requires us to identify the fully discrete spaces, vector calculus operators, and potentials corresponding, respectively, to the VEM spaces and projections. This identification leads, in particular, to the commutative diagram \eqref{eq:commutatif.diagram.vem_to_ddr}, in which the vertical arrows are the isomorphisms defined by the DoFs in each virtual space, while $\uvec{G}_T^{\node,k}$ and $\uvec{C}_T^{\edge,k}$ are, respectively, the restrictions to a mesh element $T\in\Th$ of $\uvec{G}_h^{\node,k}$ and $\uvec{C}_h^{\edge,k}$ (see \eqref{eq:vem.Gh} and \eqref{eq:vem.Ch} below).
\begin{equation}\label{eq:commutatif.diagram.vem_to_ddr}
  \begin{tikzcd}
    V^\node_{k+1}(T)\arrow{r}{\GRAD}\arrow{d}{\rotatebox{90}{$\approx$}~({\rm DoF})}
    & V^\edge_k(T)\arrow{r}{\CURL}\arrow{d}{\rotatebox{90}{$\approx$}~({\rm DoF})}
    & V^\face_k(T)\arrow{r}{\DIV}\arrow{d}{\rotatebox{90}{$\approx$}~({\rm DoF})}
    & \Poly{k}(T)\arrow{d}{\rm Id}
    \\
    \underline{V}^\node_{k+1}(T)\arrow{r}{\uvec{G}_T^{\node,k}} & \underline{V}^\edge_k(T)\arrow{r}{\uvec{C}_T^{\edge,k}} & \underline{V}^\face_k(T)\arrow{r}{D^{\face,k}_T} & \Poly{k}(T)
  \end{tikzcd}
\end{equation}
Notice that the DoF maps are also cochain maps (i.e., they commute with the continuous/discrete differential operators), which trivially yields an isomorphism between the cohomologies of the virtual and discrete complexes.

\subsubsection{Spaces}

The fully discrete counterparts of the global nodal, edge, and face VEM spaces are, respectively,
\begin{equation}\label{eq:uV.node}
  \underline{V}_{k+1}^{\node}
  \coloneq\Big\{
  \begin{aligned}[t]
    &\underline{q}_h = \big(
    (\bvec{G}_{q,T})_{T\in\Th}, (\bvec{G}_{q,F})_{F\in\Fh}, (q_E)_{E\in\Eh}, (q_\nu)_{\nu\in\Vh}
    \big)\st
    \\
    &\qquad\text{$\bvec{G}_{q,T}\in\cRoly{k}(T)$ for all $T\in\Th$,
    $\bvec{G}_{q,F}\in\cRoly{\beta_F+1}(F)$ for all $F\in\Fh$,}
    \\
    &\qquad\text{$q_E\in\Poly{k-1}(E)$ for all $E\in\Eh$,
      and $q_\nu\in\Real$ for all $\nu\in\Vh$}
    \Big\},
  \end{aligned}
\end{equation}
\begin{equation*} 
  \underline{V}_k^{\edge}
  \coloneq\Big\{
  \begin{aligned}[t]
    &\uvec{v}_h = \big(
    (\bvec{C}_{\bvec{v},T},\bvec{v}_{\cvec{R},T}^\compl)_{T\in\Th},
    (C_{\bvec{v},F},\bvec{v}_{\cvec{R},F}^\compl)_{F\in\Fh},
    (v_E)_{E\in\Eh}
    \big)\st
    \\
    &\qquad\text{$\bvec{C}_{\bvec{v},T}\in\cGoly{k+1}(T)$ and $\bvec{v}_{\cvec{R},T}^\compl\in\cRoly{k}(T)$ for all $T\in\Th$,}
    \\
    &\qquad\text{$C_{\bvec{v},F}\in\Poly[0]{k}(F)$ and $\bvec{v}_{\cvec{R},F}^\compl\in\cRoly{\beta_F+1}(F)$ for all $F\in\Fh$},
    \\
    &\qquad\text{and $v_E\in\Poly{k}(E)$ for all $E\in\Eh$}
    \Big\},
  \end{aligned}
\end{equation*}
and
\begin{equation} \label{eq:uV.face}
  \underline{V}_k^{\face}
  \coloneq\Big\{
  \begin{aligned}[t]
    &\uvec{w}_h = \big(
    (D_{\bvec{w},T},\bvec{w}_{\cvec{G},T}^\compl)_{T\in\Th},
    (w_F)_{F\in\Fh}
    \big)\st
    \\
    &\qquad\text{$D_{\bvec{w},T}\in\Poly[0]{k}(T)$ and $\bvec{w}_{\cvec{G},T}^\compl\in\cGoly{k+1}(T)$ for all $T\in\Th$,}
    \\
    &\qquad\text{and $w_F\in\Poly{k}(F)$ for all $F\in\Fh$}
    \Big\},
  \end{aligned}
\end{equation}
where we have used the customary underlined notation to recall the fact that these spaces are spanned by vectors of polynomials.
Notice that, unlike \eqref{glo-n}, \eqref{eq:uV.node} does not incorporate the zero-average condition over $\Omega$.
For a comparison of the discrete spaces corresponding to the DDR and VEM complexes, see Table \ref{tab:discrete.spaces}.
\begin{remark}[Link between polynomial components and degrees of freedom]
  In $\underline{V}^\node_{k+1}$, the component $\bvec{G}_{q,T}$ is associated to the DoFs \eqref{dof-3dnk-4},
  $\bvec{G}_{q,F}$ corresponds to \eqref{dof-3dnk-3}, $q_E$ to \eqref{dof-3dnk-2} and $q_\nu$ to \eqref{dof-3dnk-1}.
  The link between components in $\underline{V}^\edge_k$ and DoFs is as follows: $\bvec{C}_{\bvec{v},T}$ comes from \eqref{dof-3dek-5},
  $\bvec{v}^\compl_{\cvec{R},T}$ from \eqref{dof-3dek-4}, $C_{\bvec{v},F}$ from \eqref{dof-3dek-3}, $\bvec{v}^\compl_{\cvec{R},F}$ from \eqref{dof-3dek-2},
  and $v_E$ from \eqref{dof-3dek-1}.
  Finally, for the face space: $D_{\bvec{w},T}$ is generated by \eqref{dof-3dfk-2}, $\bvec{w}_{\cvec{G},T}^\compl$ from \eqref{dof-3dfk-3}, and $w_F$ from \eqref{dof-3dfk-1}.
\end{remark}

For $(\bullet,l)\in\left\{(\node,k+1),(\edge,k),(\face,k)\right\}$ and any geometric entity $Y\in\Th\cup\Fh\cup\Eh$ appearing in the definition of a fully discrete VEM space $\underline{V}_l^\bullet$, we denote by $\underline{V}_l^\bullet(Y)$ the restriction of $\underline{V}_l^\bullet$ to $Y$ collecting the polynomial spaces attached to $Y$ and its boundary.

\begin{table}
  \renewcommand*{\arraystretch}{1.2}
  \centering
  \begin{tabular}{ccccc}
    \toprule
    Space & $V\in\Vh$ & $E\in\Eh$ & $F\in\Fh$ & $T\in\Th$ \\
    \midrule
    \multicolumn{5}{c}{DDR} \\
    \midrule
    $\Xgrad{h}$ & $\Real$ & $\Poly{k-1}(E)$ & $\Poly{k-1}(F)$ & $\Poly{k-1}(T)$ \\
    $\Xcurl{h}$ & & $\Poly{k}(E)$ & $\Roly{k-1}(F)\oplus\cRoly{k}(F)$ & $\Roly{k-1}(T)\oplus\cRoly{k}(T)$ \\
    $\Xdiv{h}$ & & & $\Poly{k}(F)$ & $\Goly{k-1}(T)\oplus\cGoly{k}(T)$ \\
    $\Poly{k}(\Th)$ & & & & $\Poly{k}(T)$ \\
    \midrule
    \multicolumn{5}{c}{VEM} \\
    \midrule
    $\underline{V}_{k+1}^{\node}$ & $\Real$ & $\Poly{k-1}(E)$ & $\cRoly{\beta_F+1}(F)$ & $\cRoly{k}(T)$ \\
    $\underline{V}_k^{\edge}$ & & $\Poly{k}(E)$ & $\Poly[0]{k}(F)\times\cRoly{\beta_F+1}(F)$ & $\cGoly{k+1}(T)\times\cRoly{k}(T)$ \\
    $\underline{V}_k^{\face}$ & & & $\Poly{k}(F)$ & $\Poly[0]{k}(T)\times\cGoly{k+1}(T)$ \\
    $\Poly{k}(\Th)$ & & & & $\Poly{k}(T)$ \\
    \bottomrule
  \end{tabular}
  \caption{Comparison of the polynomial components of the discrete spaces for the DDR and VEM discrete complexes.
  The direct sum symbol ($\oplus$) replaces the Cartesian product ($\times$) when the polynomial components attached to a mesh entity refer to homogeneous quantities and not to, e.g., functions and derivatives.\label{tab:discrete.spaces}}
\end{table}

\subsubsection{Discrete vector calculus operators and potentials}

\paragraph{Nodal space}
Given $\underline{q}_h\in\underline{V}_{k+1}^{\node}$, we denote by $q_{\Eh}\in\Poly[\rm c]{k+1}(\Eh)$ the unique function on the edge skeleton of the mesh such that $\lproj{k-1}{E}q_{\Eh} = q_E$ for all $E\in\Eh$ and $q_{\Eh}(\bvec{c}_\nu) = q_\nu$ for all $\nu\in\Vh$.
We then define the \emph{discrete gradient operator} $\uvec{G}_h^{\node,k}:\underline{V}_{k+1}^{\node}\to\underline{V}_k^{\edge}$ such that, for all $\underline{q}_h\in\underline{V}_{k+1}^{\node}$,
\begin{equation}\label{eq:vem.Gh}    
  \uvec{G}_h^{\node,k}\underline{q}_h
  \coloneq
  \big(  
  (\bvec{0},\bvec{G}_{q,T})_{T\in\Th},
  (0,\bvec{G}_{q,F}) )_{F\in\Fh},
  ((q_{\Eh})_{|E}')_{E\in\Eh}
  \big).
\end{equation}

\paragraph{Edge space} Given a mesh face $F\in\Fh$, the \emph{edge serendipity operator} $\bvec{\Pi}_{S,F}^{\edge,k}:\underline{V}_k^{\edge}(F)\to\vPoly{k}(F)$ is such that, for all $\uvec{v}_F\in\underline{V}_k^{\edge}(F)$,
\[
\begin{alignedat}{2}
  \sum_{E\in\EF}\omega_{FE}\int_E(\bvec{\Pi}_{S,F}^{\edge,k}\uvec{v}_F\cdot\tangent_E)(\bvec{w}\cdot\tangent_E)
  &=\sum_{E\in\EF}\omega_{FE}\int_E v_E(\bvec{w}\cdot\tangent_E)
  &\qquad&\forall\bvec{w}\in\Goly{k}(F),
  \\
  \sum_{E\in\EF}\omega_{FE}\int_E\bvec{\Pi}_{S,F}^{\edge,k}\uvec{v}_F\cdot\tangent_E
  &=\sum_{E\in\EF}\omega_{FE}\int_E v_E.
  \\
  \int_F\ROT_F(\bvec{\Pi}_{S,F}^{\edge,k}\uvec{v}_F)~q
  &= \int_F C_{\bvec{v},F}~q
  &\qquad&\forall q\in\Poly[0]{k-1}(F),
  \\
  \int_F\bvec{\Pi}_{S,F}^{\edge,k}\uvec{v}_F\cdot\bvec{w}
  &= \int_F\bvec{v}_{\cvec{R},F}^\compl\cdot\bvec{w}
  &\qquad&\forall\bvec{w}\in\cRoly{\beta_F+1}(F),
\end{alignedat}
\]
Recalling the face Raviart--Thomas space $\RT{k+1}(F)\coloneq\Roly{k}(F)\oplus\cRoly{k+1}(F)$, the \emph{tangent trace} $\bvec{\gamma}_F^{\edge,k+1}:\underline{V}_k^{\edge}(F)\to\RT{k+1}(F)$ is such that, for all $\uvec{v}_F\in\underline{V}_k^{\edge}(F)$ and all $(r_F,\bvec{w}_F)\in\Poly[0]{k+1}(F)\times\cRoly{k+1}(F)$,
\begin{equation}\label{eq:vem:gammaFek}
  \int_F\bvec{\gamma}_F^{\edge,k+1}\uvec{v}_F\cdot(\VROT_F r_F + \bvec{w}_F)
  = \int_F C_{\bvec{v},F}~r_F
  + \sum_{E\in\EF}\omega_{FE}\int_E v_E~r_F
  + \int_F\bvec{\Pi}_{\cvec{S},F}^{\edge,k}\uvec{v}_F\cdot\bvec{w}_F.
\end{equation}
Since $C_{\bvec{v},F}$ only encodes the zero-averaged component of the discrete face curl, we reconstruct a complete \emph{face curl} $C_F^{\edge,k}:\underline{V}^\edge_k(F)\to \Poly{k}(F)$ by using the tangential components to the edges: For all $\uvec{v}_F\in\underline{V}_k^\edge(F)$,
\[
\int_F C_F^{\edge,k}\uvec{v}_F\, r_F = \int_F C_{\bvec{v},F} r_F + \frac{1}{|F|}\bigg(\int_F r_F\bigg) \bigg(\sum_{E\in\EF}\omega_{FE}\int_E v_E\bigg)\qquad\forall r_F\in\Poly{k}(F).
\]

For all $T\in\Th$, the \emph{element potential} $\bvec{P}_T^{\edge,k}:\underline{V}_k^{\edge}(T)\to\vPoly{k}(T)$ is such that, for all $\uvec{v}_T\in\underline{V}_k^{\edge}(T)$ and all $(\bvec{w}_T,\bvec{z}_T)\in\cGoly{k+1}(T)\times\cRoly{k}(T)$,
\[
\int_T\bvec{P}_T^{\edge,k}\uvec{v}_T\cdot(\CURL\bvec{w}_T + \bvec{z}_T)
= \int_T\bvec{C}_{\bvec{v},T}\cdot\bvec{w}_T
- \sum_{F\in\FT}\omega_{TF}\int_F\bvec{\gamma}_F^{\edge,k+1}\uvec{v}_T\cdot(\bvec{w}_T\times\normal_F)
+ \int_T\bvec{v}_{\cvec{R},T}^\compl\cdot\bvec{z}_T.
\]
Finally, we define the \emph{discrete curl} $\uvec{C}_h^{\edge,k}:\underline{V}_k^{\edge}\to\underline{V}_k^{\face}$ such that, for all $\uvec{v}_h\in\underline{V}_k^{\edge}$,
\begin{equation}\label{eq:vem.Ch}
  \uvec{C}_h^{\edge,k}\uvec{v}_h\coloneq\big(
  (\bvec{0}, \bvec{C}_{\bvec{v},T})_{T\in\Th},
  (C_F^{\edge,k}\uvec{v}_F)_{F\in\Fh}
  \big).
\end{equation}

\begin{remark}[Face and element gradients]
  Face and element gradients on full polynomial spaces in the spirit of \eqref{eq:cGF} and \eqref{new:X1} can be obtained setting $\bvec{G}_F^{\node,k}\coloneq\vlproj{k}{F}\circ\bvec{\gamma}_F^{\edge,k+1}\circ\uvec{G}_F^{\node,k}$ for all $F\in\FT$ and $\bvec{G}_T^{\node,k}\coloneq\bvec{P}_T^{\edge,k}\circ\uvec{G}_T^{\node,k}$ for all $T\in\Th$ where, given $Y\in\Th\cup\Fh$, $\uvec{G}_Y^{\node,k}$ denotes the restriction to $Y$ of $\uvec{G}_h^{\node,k}$ (see \eqref{eq:vem.Gh}).
  For $Y\in\Th\cup\Fh$, $\bvec{G}_Y^{\node,k}$ yields the exact gradient when applied to the interpolants on $\underline{V}_k^{\node}(Y)$ of functions in $\Poly{k+1}(Y)$.
\end{remark}

\paragraph{Face space}

The \emph{element potential} is $\bvec{P}_T^{\face,k}:\underline{V}_k^{\face}(T)\to\vPoly{k}(T)$ such that, for  all $\uvec{w}_T\in\underline{V}_k^{\face}(T)$ and all $(r_T,\bvec{v}_T)\in\Poly[0]{k+1}(T)\times\cGoly{k}(T)$,
\[
\int_T\bvec{P}_T^{\face,k}\uvec{w}_T\cdot(\GRAD r_T + \bvec{v}_T)
= -\int_T D_{\bvec{w},T}~r_T
+ \sum_{F\in\FT}\omega_{TF}\int_Fw_F~r_T
+ \int_T\bvec{w}_{\cvec{G},T}^\compl\cdot\bvec{v}_T.
\]
The \emph{discrete divergence} is the operator $D^{\face,k}_h:\underline{V}^\face_k\to \Poly{k}(\Th)$ such that, for all $\uvec{w}_h\in\underline{V}^\face_k$ and all $T\in\Th$, $(D^{\face,k}_h\uvec{w}_h)_{|T}=D^{\face,k}_T\uvec{w}_T\in\Poly{k}(T)$ where, recalling that $D_{\bvec{w},T}$ only encodes the zero-average component of the discrete divergence and, following the same idea as for $C_F^{\edge,k}$, we define $D^{\face,k}_T\uvec{w}_T$ by
\[
\int_T D^{\face,k}_T\uvec{w}_T\,q = \int_T D_{\bvec{w},T} q + \frac{1}{|T|}\bigg(\int_T q\bigg) \bigg(\sum_{F\in\FT}\omega_{TF}\int_F w_F\bigg)\qquad\forall q\in\Poly{k}(T).
\]

\subsubsection{Discrete $L^2$-products}

For $\bullet\in\{\edge,\face\}$, the discrete $L^2$-product $(\cdot,\cdot)_{\bullet,h}$ in $\underline{V}_k^{\bullet}$ is defined as follows:
For all $\uvec{v}_h,\uvec{w}_h\in\underline{V}_k^\bullet$,
\[
(\uvec{v}_h,\uvec{w}_h)_{\bullet,h}\coloneq\sum_{T\in\Th}(\uvec{v}_T,\uvec{w}_T)_{\bullet,T}
\]
where, for all $T\in\Th$,
\[
(\uvec{v}_T,\uvec{w}_T)_{\bullet,T}
\coloneq\int_T\bvec{P}_T^{\bullet,k}\uvec{v}_T\cdot\bvec{P}_T^{\bullet,k}\uvec{w}_T
+ s_{\bullet,T}\big(\uvec{v}_T - \uvec{I}_{k,T}^\bullet(\bvec{P}_T^{\bullet,k}\uvec{v}_T), \uvec{w}_T - \uvec{I}_{k,T}^\bullet(\bvec{P}_T^{\bullet,k}\uvec{w}_T)\big),
\]
where $s_{\bullet,T}:\underline{V}_k^\bullet(T)\times\underline{V}_k^\bullet(T)\to\Real$ is, for example, the stabilization bilinear form corresponding to the one defined by \eqref{eq:vem:sT} while $\uvec{I}_{k,T}^\bullet$ is the natural interpolator on $\underline{V}_k^\bullet(T)$ obtained assembling the $L^2$-orthogonal projections onto each component space.

\subsection{VEM interpretation of the DDR scheme}

The main steps in interpreting the DDR as a VEM scheme is the introduction of local virtual element spaces, the associated DoFs, and the projectors corresponding to the DDR discrete operators and potentials. The virtual spaces $X^\bullet_l$, $(\bullet,l)\in\{(\node,k+1),(\edge,k),(\face,k)\}$, and continuous differential operators are then linked to the DDR spaces $\underline{X}^\bullet_l$, $\bullet\in\{\GRAD,\CURL,\DIV\}$, and discrete operators through the commuting diagram \eqref{eq:commutatif.diagram.ddr_to_vem}, in which the vertical arrows are the isomorphisms defined by the DoFs in each virtual space, and $\uGT$ and $\uCT$ are, respectively, the restrictions to $T\in\Th$ of $\uGh$ and $\uCh$ (see \eqref{eq:uGh} and \eqref{eq:uCh}).
\begin{equation}\label{eq:commutatif.diagram.ddr_to_vem}
  \begin{tikzcd}
    X^\node_{k+1}(T)\arrow{r}{\GRAD}\arrow{d}{\rotatebox{90}{$\approx$}~({\rm DoF})}
    & X^\edge_k(T)\arrow{r}{\CURL}\arrow{d}{\rotatebox{90}{$\approx$}~({\rm DoF})}
    & X^\face_k(T)\arrow{r}{\DIV}\arrow{d}{\rotatebox{90}{$\approx$}~({\rm DoF})}
    & \Poly{k}(T)\arrow{d}{\rm Id}
    \\
    \Xgrad{T}\arrow{r}{\uGT} & \Xcurl{T}\arrow{r}{\uCT} & \Xdiv{T}\arrow{r}{\DT} & \Poly{k}(T)
  \end{tikzcd}
\end{equation}
As for diagram \eqref{eq:commutatif.diagram.vem_to_ddr}, since the DoF maps are cochain maps, the cohomologies of the virtual and discrete complexes in diagram \eqref{eq:commutatif.diagram.ddr_to_vem} are isomorphic.

\subsubsection{Virtual spaces and degrees of freedom}

\paragraph{Nodal space.}  
We start by defining the nodal space on faces
\begin{equation}\label{DDR2VEM1}
\begin{aligned}
{X}_{k+1}^{\node}(F) \coloneq 
\Big\{ q \in H^1(F) : \: & q_{|E} \in \Poly{k+1}(E) \quad\forall E \in \EF, \: \Delta_F q \in \Poly{k+1}(F) , \\
& \int_{F} (\GRAD_F q - \vlproj{k}{F} (\GRAD_F q) )\cdot \xxf\,p\df=0 \quad\forall p\in \Poly{k-1 | k+1}(F) \Big\} .
\end{aligned}
\end{equation}
where $\Poly{k-1 | k+1}(F)$ is any space such that $\Poly{k+1}(F) = \Poly{k-1}(F) \oplus \Poly{k-1 | k+1}(F)$.
The local space on an element $T$ is defined by
$$
\begin{aligned}
X^{\node}_{k+1}(T) \coloneq \Big\{ q \in H^1(T) \: : 
\:& q_{|F}\in X^{\node}_{k+1}(F)\quad\forall F\in\FT , \: \,\Delta\,q\in\Poly{k+1}(T) , \\
& \int_{T} (\GRAD q - \vlproj{k}{T} (\GRAD q) )\cdot \xxE \,p\df=0 \quad\forall p\in \Poly{k-1 | k+1}(T)
\Big\} .
\end{aligned}
$$
The DoFs are chosen as follows:
\begin{align*}
& \bullet \mbox{for any vertex $\nu\in\VT$, the nodal value $q(\bvec{c}_\nu),$ } \\
& \bullet \mbox{if $k\ge 1$: \ for each edge $E\in\ET$,  $ \int_E q\, p\ds\quad\forall p\in\Poly{k-1}(E),$} \\
&\bullet
\mbox{if $k\ge 1$: \  for each face $F\in\FT$,  $\int_{F} q \: p \df \quad \forall p \in\Poly{k-1}(F)$,}\\
&\bullet\mbox{if $k\ge 1$: \  $\int_{T} q \: p \dPP \quad \forall p \in\Poly{k-1}(T).$}
\end{align*}

\paragraph{Edge space.} 
The edge spaces on faces and elements are given by 
\begin{multline*}
  \bvec{X}_{k}^{\edge}(F) \coloneq\! \Big\{
  \vv\in \bvec{L}^2(F)\st
  \DIV_F\vv \in \Poly{k+1}(F), \, \ROT_F\vv \in \Poly{k}(F),\,
  \vv \cdot \tt_E \in \Poly{k}(E) \quad \forall E \in \EF  \\
  \int_{F} (\vv - \vlproj{k}{F} \vv )  \cdot \xxf \,p \df = 0 \quad\forall p\in \Poly{k-1 | k+1}(F)
  \Big\} .
\end{multline*}
$$
\begin{aligned}
\bvec{X}^{\edge}_{k}(T) \coloneq \Big\{
    \vv\in\bvec{L}^2(T)\::\:&\DIV\vv\in \Poly{k+1}(T), \: \CURL(\CURL\vv) \in \vPoly{k}(T), \\
    &  \vv_{{\rm t},F} \in \bvec{X}^{\edge}_{k}(F)\ \forall F\in\FT,\; \vv\cdot\tt_E \mbox{ single valued on each edge } E\in\ET, \\
    & \int_T (\CURL\vv - \vlproj{k}{T}(\CURL\vv)) \cdot (\xxE \times \bvec{p}) = 0   \quad\forall \bvec{p} \in \vPoly{k-1 | k}(T) , \\
    & \int_{T} (\vv - \vlproj{k}{T} \vv )\cdot \xxE \,p\df=0 \quad\forall p\in \Poly{k-1 | k+1}(T)
    \Big\} .
\end{aligned}
$$
The DoFs are chosen as follows
\begin{align}\label{eq:DDR:dofs.Xek}
& \bullet\mbox{for each edge $E\in\ET$, $\int_E (\vv\cdot\tt_E) p\ds \quad \forall p \in \Poly{k}(E) ,$ } \\
& \bullet \mbox{if $k\ge 1$: \ for each face $F\in\FT$,  $\int_{F}({\vv_{{\rm t},F}}\cdot\xxf) \: p\df \quad \forall p \in\Poly{k-1}(F),$} \\
& \bullet \mbox{if $k \ge1$: \ for each face $F\in\FT$,  $\int_{F}{\vv_{{\rm t},F}\cdot\VROT_Fp} \df \quad \forall p \in \Poly[0]{k}(F)$ }, \\
&
 \bullet \mbox{if $k\ge 1$: \ $ \int_{T}
(\vv\cdot\xx_{T}) p\, \dPP \quad \forall p \in \Poly{\kdP}(T)$} , \\
&\label{eq:DDR:dofs.Xek:Gck}
 \bullet\mbox{if $k\ge 1$: \ $\int_{T} 
\vv\cdot\CURL (\bvec{x}_T\times\bvec{p}) \,\dPP \quad \forall \bvec{p} \in\vPoly{k-1}(T)$} .
\end{align}

\paragraph{Face space.} 
The face space on an element $T$ of the mesh is given by
\begin{multline*}
\bvec{X}^{\face}_{k}(T) \!\coloneq
\Big\{ \ww\in\bvec{L}^2(T)\st {\DIV\ww\in \Poly{k}}(T), \, \CURL\ww\in\vPoly{k}(T),\:   \ww_{|F}\cdot\nn_F\in\Poly{k}(F)\quad\forall F \in \FT , \\
\int_T (\ww - \vlproj{k}{T}\ww) \cdot (\xxE \times \bvec{p}) = 0   \quad\forall \bvec{p} \in \vPoly{k-1 | k}(F) ,
\Big\}.
\end{multline*}
with DoFs
\begin{align*}
& \bullet\mbox{for each face $F\in\FT$, $\int_F
(\ww\cdot\nn_F) p \df \quad \forall p \in \Poly{k}(F), $} \\
& \bullet \mbox{if $k\ge 1$: \ $\int_{T}
{\ww\cdot(\GRAD p) \dPP \quad \forall p \in \Poly[0]{k}(T) }$}, \\
& \bullet \mbox{if $k\ge 1$: \ $\int_{T} \ww\cdot (\xx_{T}\times \bvec{p}) \dPP \quad \forall \bvec{p}\in\vPoly{k-1}(T)$}.
\end{align*}

\subsubsection{$L^2$-orthogonal projectors}

\paragraph{Nodal space.}
By definition of the $L^2$-orthogonal projector and an integration by parts, 
for all $q_F \in {X}_{k+1}^{\node}(F)$ and all $\bvec{p} \in \vPoly{k}(F)$,
$$
\int_{F} \vlproj{k}{F} (\GRAD_F q_F) \cdot \bvec{p} = \int_{F} (\GRAD_F q_F) \cdot \bvec{p}
= - \int_F q_F \DIV_F \bvec{p} \, + \! \sum_{E\in\EF} \omega_{FE} \int_E q_F (\bvec{p}\cdot\normal_{FE}), 
$$
which is computable from the DoFs. 
This relation and the chosen DoFs show that $\vlproj{k}{F}\circ\GRAD_F$ is computable on $X_{k+1}^\node(F)$ from the DoFs and corresponds to the face gradient \eqref{eq:cGF}.
The scalar trace defined in \eqref{eq:trF} corresponds to the $L^2$-orthogonal projector from ${X}_{k+1}^{\node}(F)$ onto $\Poly{k+1}(F)$. This is seen first expressing any $r \in \Poly{k+1}(F)$ as $\DIV_F (\xx_F p)$ with 
$p\in\Poly{k+1}(F)$ (see, e.g., \cite[Remark 2]{Di-Pietro.Droniou:21*1}), and then integrating by parts to get, for all $q_F \in  {X}_{k+1}^{\node}(F)$,
$$
\int_F (\lproj{k+1}{F} q_F) \, r = \int_F q_F \, r = - \int_F (\GRAD_F q_F) \cdot (\xx_F p) + 
 \sum_{E\in\EF} \omega_{FE} \int_E q_F \, p (\xx_F\cdot\normal_{FE}).
$$
Thanks to the second line in definition \eqref{DDR2VEM1}, the term $\GRAD_F q_F$ in the right-hand side can be replaced with $\vlproj{k}{F} (\GRAD_F q_F)$. This shows that $\lproj{k+1}{F} q_F$ is computable from the DoFs and corresponds to the scalar trace $\trF \underline{q}_F$.
Analogously, the element gradient in \eqref{new:X1} corresponds to the $L^2$-projection of the gradient 
on $\vPoly{k}(T)$ (and this projection is therefore computable from the DoFs) since, for all $q_T \in {X}_{k+1}^{\node}(T)$ and all $\bvec{p} \in \vPoly{k}(T)$,
$$
\int_{F} \vlproj{k}{T} (\GRAD q_T) \cdot \bvec{p} = \int_{T} (\GRAD q_T) \cdot \bvec{p}
= - \int_T q_T \DIV \bvec{p}  +  \sum_{F\in\FT} \omega_{TF} \int_F (\lproj{k+1}{F} q_T) (\bvec{p}\cdot\normal_{F}).
$$
The discrete scalar potential \eqref{eq:PgradT} corresponds to the $L^2$-orthogonal projector from 
${X}_{k+1}^{\node}(T)$ onto $\Poly{k+1}(T)$. Indeed, writing any $r \in \Poly{k+1}(T)$ as $\DIV (\xx_T p)$ with 
$p\in\Poly{k+1}(T)$, for all $q_T\in X_{k+1}^\node(T)$,
$$
\begin{aligned}
\int_T (\lproj{k+1}{T} q_T) r = \int_T q_T \, r 
& = - \int_T (\GRAD q_T) \cdot (\xx_T p) + 
 \sum_{F\in\FT} \omega_{TF} \int_F  q_T \, p (\xx_T\cdot\normal_{F}) \\
& = - \int_T \vlproj{k}{T} (\GRAD q_T) \cdot (\xx_T p) + 
 \sum_{F\in\FT} \omega_{TF} \int_F (\lproj{k+1}{F} q_T) \, p (\xx_T\cdot\normal_{F}) \, ,
\end{aligned}
$$
where, in the last step, we also used the constraint appearing in the definition of ${X}_{k+1}^{\node}(T)$, together with the fact that $\xx_T\cdot\normal_F$ is constant over $F$.

\paragraph{Edge space.}  
For all $F\in\Fh$ and $\vv_F \in \bvec{X}^{\edge}_{k}(F)$, the face curl \eqref{eq:CF} corresponds to $\ROT_F\vv_F \in \Poly{k}(F)$, without the need for any projection. This can be checked by noticing that
\begin{equation*}
  \int_F (\ROT_F\vv_F) p
  = \int_F \vv_F \cdot \VROT_F p
  - \sum_{E\in\EF}\omega_{FE}\int_E (\vv_F\cdot\tangent_{E}) p \qquad
  \forall p \in\Poly{k}(F)
\end{equation*}
and by recalling the choice of DoFs in $\bvec{X}^\edge_k(F)$.
The tangential trace \eqref{eq:trFt} becomes the $L^2$-orthogonal projector $\vlproj{k}{F} : \bvec{X}^{\edge}_{k}(F)\to\vPoly{k}(F)$ (which is thus computable
from the DoFs). Indeed, for all $\vv_F \in \bvec{X}^{\edge}_{k}(F)$, writing any $\bvec{p} \in \vPoly{k}(F)$ as $\VROT_F p + \xxf q$ with $p \in \Poly{k+1}(F)$
and $q \in \Poly{k-1}(F)$, we have
\begin{align*}
  \int_F \vlproj{k}{F}\vv_F \cdot(\VROT_F p + \xxf q) 
  ={}& \int_F \vv_F \cdot(\VROT_F p + \xxf q)\\ 
  ={}& \int_F p (\ROT_F \vv_F) 
  + \sum_{E\in\EF}\omega_{FE} \int_E (\vv_F\cdot\tangent_E) p
  + \int_F (\vv_F\cdot\xxf) \, q,
\end{align*}
with, as noticed above, $\ROT_F \vv_F$ corresponding to the discrete face curl appearing in \eqref{eq:trFt}.

For all $T\in\Th$ and $\vv_T\in \bvec{X}^{\edge}_{k}(T)$, the element curl \eqref{eq:cCT} is the $L^2$-orthogonal projector of $\CURL \vv$ onto $\vPoly{k}(T)$. To see this, we simply write
\begin{equation*}
\begin{aligned}
  \int_T  (\vlproj{k}{T} \CURL \vv_T) \cdot \bvec{p}
  & = \int_T \vv_T \cdot \CURL\bvec{p}
  + \sum_{F\in\FT} \omega_{TF} \int_F  \vv_T \cdot(\bvec{p}\times\normal_F) \\
  & = \int_T \Rproj{k-1}{T} \vv_T \cdot \CURL\bvec{p}
  + \sum_{F\in\FT} \omega_{TF} \int_F  \vlproj{k}{F}(\vv_T)_{{\rm t},F} \cdot(\bvec{p}\times\normal_F) 
  \quad\forall \bvec{p} \in\vPoly{k}(T),
  \end{aligned}
\end{equation*}
the introduction of the tangential component denoted by ${\rm t},F$ and of the projector $\vlproj{k}{F}$ being justified by the fact that $\bvec{p}\times\normal_F$ is tangential to $F$ and of degree $\le k$.
We conclude the equivalence with \eqref{eq:cCT} by recalling that the DoFs \eqref{eq:DDR:dofs.Xek:Gck} provide $\Rproj{k-1}{T} \vv_T$ (since $\Roly{k-1}(T)=\CURL\vPoly{k}(T) = \CURL\cGoly{k}(T)$) and that $\vlproj{k}{F}(\vv_T)_{{\rm t},F}$ corresponds through the DoFs to the tangential trace $\trFt\uvec{v}_F$.
Finally, the discrete vector potential \eqref{eq:PcurlT}  corresponds to the $L^2$-orthogonal projector $\vlproj{k}{T} : \bvec{X}^{\edge}_{k}(T) \to\vPoly{k}(T)$ since a generic test function $\bvec{p} \in \vPoly{k}(T)$ can be decomposed as $\bvec{p} = \CURL \bvec{q} + \xxE p$ with $\bvec{q} \in \cGoly{k+1}(T)$ and $p \in \Poly{k-1}(T)$ so that, for all $\vv_T \in \bvec{X}^{\edge}_{k}(T)$,
$$
\int_T \vlproj{k}{T}\vv_T \cdot \bvec{p}
= \int_T (\vlproj{k}{T}\CURL\vv_T) \cdot \bvec{q}
- \sum_{F\in\FT}\omega_{TF} \int_F  \vlproj{k}{F}(\vv_T)_{{\rm t},F} \cdot(\bvec{q}\times\normal_F)
+ \int_T (\vv_T\cdot\xxE)\, p,
$$
where we used again the constraints appearing in the definition of $\bvec{X}^{\edge}_{k}(T)$ and $\bvec{X}^{\edge}_{k}(F)$,
the latter combined with the fact that 
$\bvec{q}_{|F}\times \normal_F \in \vPoly{k}(F)+\xxf\Poly{k}(F)$, see for instance \cite[Eq.~(A.5)]{Di-Pietro.Droniou:21*1}.

\paragraph{Face space.}  
For all $\ww_T \in \bvec{X}^{\face}_{k}(T)$, the discrete divergence \eqref{eq:DT} corresponds to $\DIV\ww_T \in \Poly{k}(T)$ (without any projection) since
$$
\int_T (\DIV\bvec{w}_T) q = - \int_T \bvec{w}_T \cdot\GRAD q
+ \sum_{F\in\FT}\omega_{TF} \int_F (\ww_T\cdot\normal_F) q \qquad\forall q\in\Poly{k}(T),
$$
each of the terms involving $\bvec{w}_T$ in the right-hand side corresponding to some DoFs on $\bvec{X}^\face_k(T)$.
The discrete vector potential \eqref{eq:PdivT} corresponds to the $L^2$-orthogonal projector 
$\vlproj{k}{T}: \bvec{X}^{\face}_{k}:\to\vPoly{k}(T)$: For all $\ww\in\bvec{X}^{\face}_{k}(T)$, writing a generic $\bvec{p}\in \vPoly{k}(T)$ as 
$\bvec{p} = \GRAD r + \xxE \times \bvec{q}$ with $r \in \Poly{k+1}(T)$ and $\bvec{q} \in \Poly{k-1}(T)$, we have
$$
\int_T (\vlproj{k}{T}\ww) \cdot(\GRAD r + \xxE\times\bvec{q})
= -\int_T (\DIV\ww)  r + \sum_{F\in\FT}\omega_{TF} \int_F (\ww\cdot\normal_F) r
+ \int_T \ww \cdot (\xxE\times\bvec{q}).
$$

\subsubsection{Discrete scalar products}

Having established the relation, through the DoFs, between discrete potentials and projections, the translation of the DDR scalar products of Section \ref{sec:DDRpots} into the VEM setting is simply a substitution of symbols. For example, in the nodal case, the scalar product is given by: For all $q,w \in {X}_{k+1}^{\node}(T)$,
$$
\begin{aligned}{}
[q,w]_{{X}_{k+1}^{\node}(T)} ={}& \int_T (\lproj{k+1}{T} q) \, (\lproj{k+1}{T} w) + S_{\GRAD,T}(q,w) \, ,
\\
\mbox{ where }S_{\GRAD,T}(q,w)
={}&
\sum_{F\in\FT} h_T\int_F\big( \lproj{k+1}{T} q - \lproj{k+1}{F} q \big) \big( \lproj{k+1}{T} w - \lproj{k+1}{F} w \big)
\\
& + \sum_{E\in\ET} h_T^2 \int_E \big( \lproj{k+1}{T} q - q) \big(\lproj{k+1}{T} w - w \big) \, .
\end{aligned}
$$

\subsection{Comparison and further developments}\label{sec:bridge:comparison}

Due to the use of the serendipity approach on element faces, the Virtual Element Method has fewer DoFs for certain DoF sets. On the other hand, thanks to the systematic adoption of what, in the VEM language, would be called an enhancement approach, the DDR has fewer DoFs in other instances, such as element volumes.
Enhancement and serendipity have a similar goal, which is to use some DoFs that allow the computation of a suitable projection operator in order to ``enslave'' some other DoFs and thus reduce the local space dimension. They however have some important technical differences:
\begin{compactitem}
\item enhancement incorporates information made available by differential operators reconstructions into a potential of higher order than the projection directly computable from the DoFs;
\item serendipity hinges on the fact that fixing the value of a polynomial on the boundary of an element fixes also its value inside the element, at least for elements with a sufficiently large number of edges/faces (or otherwise with the help of some internal moments).
\end{compactitem}
This difference is reflected in the fact that serendipity is a geometry-dependent approach (as can be noticed recalling the definition of the parameter $\eta_F$ in Section \ref{sec:local}), while higher-order reconstructions/enhancement are independent of the element shape (but potentially less efficient as they ``enslave'' a smaller number of DoFs).
  
Thanks to the bridges developed above, such DoF-reduction strategies can be easily combined
resulting in a more efficient and highly competitive construction that can be interpreted both as DDR and VEM, as detailed in the recent work \cite{Di-Pietro.Droniou:22}. 
Drawing the details of such construction is beyond the scope of the present article; nevertheless, to shed some more light onto the idea, we show very briefly the following short example of cross-fertilization. 
Consider the Virtual Element space $V^{\face}_{k}(T)$. It can be checked that lowering by one the degree $k$ appearing in the DoF set \eqref{dof-3dfk-3} (that is, testing only on $\bvec{p}\in\vPoly{k-1}(T)$) would still allow to compute the $L^2$-projection 
${\vlproj{k}{T}}: V^{\face}_{k}(T) \rightarrow \vPoly{k}(T)$. Therefore one could use such projection to introduce an ``enhancement'' following the spirit in \cite{Projectors} and mimicking the analogous construction in the DDR approach:
decrease by one the degree $k$ in \eqref{dof-3dfk-3} and introduce the corresponding constraint in the definition of the space $V^{\face}_{k}(T)$. 
Analogously, the serendipity idea from VEM could be injected in the DDR setting, allowing to lower the DoF count on faces.
For instance, one could introduce in the spaces $\Xgrad{T}$ and $\Xcurl{T}$ a construction leveraging serendipity operators in order to fix the values of certain polynomial components on faces.

\section{Theoretical analysis of the methods}\label{sec:theoretical}

In this section we prove the error estimates for the proposed schemes stated in Theorems \ref{thm:ddr:convergence} (DDR) and \ref{thm:vem:convergence} (VEM).

\subsection{Analysis of the DDR scheme}\label{sec:theo-DDR}

After recasting problem \eqref{eq:discrete} in variational form and proving stability for the bilinear form in the left-hand side, we give a proof of Theorem \ref{thm:ddr:convergence}.

\subsubsection{Variational formulation and stability}\label{sec:theo-DDR:stability}

The variational formulation of problem \eqref{eq:discrete} reads
\begin{equation}\label{eq:discrete:variational}
  \left\{~
  \begin{aligned}
    &\text{Find $(\uvec{u}_h,\underline{p}_h)\in\Xcurl{h}\times\XgradO{h}$ such that}
    \\
    &\mathrm{A}_h((\uvec{u}_h,\underline{p}_h),(\uvec{v}_h,\underline{q}_h))
    = \ell_h(\bvec{f},\uvec{v}_h)\quad
    \forall(\uvec{v}_h,\underline{q}_h)\in\Xcurl{h}\times\XgradO{h},
  \end{aligned}
  \right.
\end{equation}
where the bilinear form $\mathrm{A}_h:\big(\Xcurl{h}\times\Xgrad{h}\big)^2\to\Real$ is such that, for all $(\uvec{w}_h,\underline{r}_h),(\uvec{v}_h,\underline{q}_h)\in\Xcurl{h}\times\Xgrad{h}$,
\begin{equation}\label{eq:Ah}
  \mathrm{A}_h((\uvec{w}_h,\underline{r}_h),(\uvec{v}_h,\underline{q}_h))
  \coloneq
  \mathrm{a}_h(\uvec{w}_h,\uvec{v}_h) + \mathrm{b}_h(\underline{r}_h,\uvec{v}_h) -\mathrm{b}_h(\underline{q}_h,\uvec{w}_h).
\end{equation}
Well-posedness is then a classical consequence of the following inf-sup condition on $\mathrm{A}_h$.
\begin{lemma}[Inf-sup condition for $\mathrm{A}_h$]\label{lem:inf-sup}
  For all $(\uvec{w}_h,\underline{r}_h)\in\Xcurl{h}\times\XgradO{h}$, it holds
  \begin{equation}\label{eq:inf-sup}
    \tnorm[h]{(\uvec{w}_h,\underline{r}_h)}
    \lesssim\sup_{(\uvec{v}_h,\underline{q}_h)\in\Xcurl{h}\times\XgradO{h}\setminus\{(\uvec{0},\underline{0})\}}
    \frac{\mathrm{A}_h((\uvec{w}_h,\underline{r}_h),(\uvec{v}_h,\underline{q}_h))}{\tnorm[h]{(\uvec{v}_h,\underline{q}_h)}}.
  \end{equation}
\end{lemma}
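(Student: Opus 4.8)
The plan is to reduce everything to \eqref{eq:inf-sup}, from which well-posedness of \eqref{eq:discrete:variational} follows by the standard theory of variational problems (the trial and test spaces coincide and are finite-dimensional). The proof of \eqref{eq:inf-sup} will exploit the de Rham structure of the DDR spaces through two facts. \emph{(i)} The complex property $\uCh(\uGh\underline{s}_h)=\uvec{0}$ for all $\underline{s}_h\in\Xgrad{h}$, which is \eqref{eq:Im.uGh.subset.Ker.uCh}; combined with $\uGh(\Igrad{h}1)=\Icurl{h}(\GRAD 1)=\uvec{0}$ (from the commutation property recalled after \eqref{eq:ah.bh.lh}), this shows that $\Ker\uGh=\mathrm{span}\{\Igrad{h}1\}$ on $\Xgrad{h}$, hence that $\uGh$ is injective on $\XgradO{h}$ and that $\uGh(\XgradO{h})=\uGh(\Xgrad{h})$. \emph{(ii)} The discrete Poincaré inequalities for the DDR complex (see \cite{Di-Pietro.Droniou:21*1}): $\norm[\GRAD,h]{\underline{s}_h}\lesssim\norm[\CURL,h]{\uGh\underline{s}_h}$ for all $\underline{s}_h\in\XgradO{h}$, and $\norm[\CURL,h]{\uvec{z}_h}\lesssim\norm[\DIV,h]{\uCh\uvec{z}_h}$ for all $\uvec{z}_h\in\Xcurl{h}$ that are $(\cdot,\cdot)_{\CURL,h}$-orthogonal to $\uGh(\Xgrad{h})$.

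Fix $(\uvec{w}_h,\underline{r}_h)\in\Xcurl{h}\times\XgradO{h}$ (the case of a vanishing argument being trivial), and let $\uGh\underline{\phi}_h$, with $\underline{\phi}_h\in\XgradO{h}$, be the $(\cdot,\cdot)_{\CURL,h}$-orthogonal projection of $\uvec{w}_h$ onto the subspace $\uGh(\XgradO{h})$; write $\uvec{w}_h=\uGh\underline{\phi}_h+\uvec{w}_h^\perp$, so that $\uvec{w}_h^\perp$ is $(\cdot,\cdot)_{\CURL,h}$-orthogonal to $\uGh(\Xgrad{h})$ and $\uCh\uvec{w}_h^\perp=\uCh\uvec{w}_h$ by fact \emph{(i)}. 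A Pythagorean splitting $\norm[\CURL,h]{\uvec{w}_h}^2=\norm[\CURL,h]{\uGh\underline{\phi}_h}^2+\norm[\CURL,h]{\uvec{w}_h^\perp}^2$ combined with the two Poincaré inequalities of fact \emph{(ii)} (the first applied to $\underline{r}_h$ and to $\underline{\phi}_h$, the second to $\uvec{w}_h^\perp$) then yields the norm estimate
\[
\tnorm[h]{(\uvec{w}_h,\underline{r}_h)}^2\lesssim\norm[\DIV,h]{\uCh\uvec{w}_h}^2+\norm[\CURL,h]{\uGh\underline{r}_h}^2+\norm[\CURL,h]{\uGh\underline{\phi}_h}^2,
\]
while $\norm[\CURL,h]{\uGh\underline{\phi}_h}\le\norm[\CURL,h]{\uvec{w}_h}$ because an orthogonal projection is contractive.

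The key step is then to test the right-hand side of \eqref{eq:inf-sup} with the explicit pair $(\uvec{v}_h,\underline{q}_h)\coloneq(\uvec{w}_h+\uGh\underline{r}_h,\ \underline{r}_h-\underline{\phi}_h)\in\Xcurl{h}\times\XgradO{h}$. Expanding $\mathrm{A}_h$ via \eqref{eq:Ah} and \eqref{eq:ah.bh.lh}, and using the complex property $\uCh\uGh=\uvec{0}$ together with the orthogonality of $\uvec{w}_h^\perp$ to $\uGh(\Xgrad{h})$, a direct computation shows that all mixed terms (in particular the two occurrences of $(\uGh\underline{r}_h,\uGh\underline{\phi}_h)_{\CURL,h}$) cancel, leaving
\[
\mathrm{A}_h\big((\uvec{w}_h,\underline{r}_h),(\uvec{v}_h,\underline{q}_h)\big)=\norm[\DIV,h]{\uCh\uvec{w}_h}^2+\norm[\CURL,h]{\uGh\underline{r}_h}^2+\norm[\CURL,h]{\uGh\underline{\phi}_h}^2\gtrsim\tnorm[h]{(\uvec{w}_h,\underline{r}_h)}^2
\]
by the norm estimate above. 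For the denominator, $\tnorm[\CURL,h]{\uGh\underline{r}_h}^2=\norm[\CURL,h]{\uGh\underline{r}_h}^2\le\tnorm[\GRAD,h]{\underline{r}_h}^2$ (again from $\uCh\uGh=\uvec{0}$) and $\tnorm[\GRAD,h]{\underline{\phi}_h}^2\lesssim\norm[\CURL,h]{\uGh\underline{\phi}_h}^2\le\norm[\CURL,h]{\uvec{w}_h}^2$ (by fact \emph{(ii)} and contractivity), so a triangle inequality gives $\tnorm[h]{(\uvec{v}_h,\underline{q}_h)}\lesssim\tnorm[h]{(\uvec{w}_h,\underline{r}_h)}$. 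Combining the last two displays and taking the supremum over test pairs establishes \eqref{eq:inf-sup}.

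The only part carrying genuine analytic content — and the step I expect to be the main obstacle — is fact \emph{(ii)}, the pair of discrete Poincaré inequalities; everything else is elementary Hilbert-space algebra organised around the de Rham structure. These inequalities are established in \cite{Di-Pietro.Droniou:21*1}, and the only point that needs checking in the present setting is that the packaging of the DDR spaces used here (edge unknowns gathered as $\Poly[\rm c]{k+1}(\Eh)$ in $\Xgrad{h}$, and the zero-mean constraint imposed through $(\cdot,\cdot)_{\GRAD,h}$ rather than pointwise) is a mere reformulation of the standard degrees of freedom, so that those results apply unchanged.
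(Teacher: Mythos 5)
Your proof is correct and uses the same ingredients as the paper's own argument: the complex property $\uCh\uGh=\uvec{0}$, the $(\cdot,\cdot)_{\CURL,h}$-orthogonal decomposition of $\uvec{w}_h$ along $\uGh(\XgradO{h})$ (which, by exactness, is the decomposition the paper writes with $(\Ker\uCh)^\perp$), and the two discrete Poincar\'e inequalities of \cite{Di-Pietro.Droniou:21*1}. The only difference is organisational: you gather everything into the single test pair $(\uvec{w}_h+\uGh\underline{r}_h,\underline{r}_h-\underline{\phi}_h)$ and bound its norm, whereas the paper tests separately with $(\uvec{w}_h+\uGh\underline{r}_h,\underline{r}_h)$ and $(\uvec{0},-\underline{\phi}_h)$ and sums the resulting estimates against the supremum -- an equivalent bookkeeping.
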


\begin{proof}
  Denote by $\$$ the supremum in the right-hand side of \eqref{eq:inf-sup}.
  Taking $(\uvec{v}_h,\underline{q}_h) = (\uvec{w}_h + \uGh\underline{r}_h,\underline{r}_h)$ in \eqref{eq:Ah},
  recalling the definitions \eqref{eq:ah.bh.lh} of $\mathrm{a}_h$ and $\mathrm{b}_h$,
  and using the relation \eqref{eq:Im.uGh.subset.Ker.uCh}, we have
  \begin{equation}\label{eq:inf-sup:1}
    \begin{aligned}
    \norm[\DIV,h]{\uCh\uvec{w}_h}^2
    + \norm[\CURL,h]{\uGh\underline{r}_h}^2
    &= \mathrm{A}_h((\uvec{w}_h,\underline{r}_h),(\uvec{w}_h + \uGh\underline{r}_h,\underline{r}_h))
    \\
    &\le\$\tnorm[h]{(\uvec{w}_h + \uGh\underline{r}_h,\underline{r}_h)}
    \lesssim
    \$\tnorm[h]{(\uvec{w}_h,\underline{r}_h)},
  \end{aligned}
  \end{equation}
  where we have used a triangle inequality along with the definitions \eqref{eq:tnorm.h} of $\tnorm[h]{{\cdot}}$ and \eqref{eq:tnorm.CURL.GRAD.h} of $\tnorm[\GRAD,h]{{\cdot}}$ for the pair $(\uGh\underline{r}_h,\underline{0})$ together with \eqref{eq:Im.uGh.subset.Ker.uCh} to infer $\tnorm[h]{(\uvec{w}_h + \uGh\underline{r}_h,\underline{r}_h)}\lesssim\tnorm[h]{(\uvec{w}_h,\underline{r}_h)}$.
  The Poincar\'e--Wirtinger inequality of \cite[Theorem 3]{Di-Pietro.Droniou:21*1} combined with \eqref{eq:inf-sup:1} yields
  \begin{equation}\label{eq:inf-sup:2}
    \norm[\GRAD,h]{\underline{r}_h}^2\lesssim\$\tnorm[h]{(\uvec{w}_h,\underline{r}_h)}.
  \end{equation}
  To estimate $\norm[\CURL,h]{\uvec{w}_h}$, we infer from the exactness property of \cite[Theorem 2]{Di-Pietro.Droniou:21*1} the $(\cdot,\cdot)_{\CURL,h}$-orthogonal decomposition $\uvec{w}_h = \uvec{\psi}_h + \uGh\underline{\phi}_h$, where $\uvec{\psi}_h\in(\Ker\uCh)^\perp$ and $\underline{\phi}_h\in\XgradO{h}$.
  By the Poincar\'e inequality of \cite[Theorem 4]{Di-Pietro.Droniou:21*1}, it holds
  \begin{equation}\label{eq:inf-sup:3}
    \norm[\CURL,h]{\uvec{\psi}_h}^2
    \lesssim\norm[\DIV,h]{\uCh\uvec{\psi}_h}^2
    =\norm[\DIV,h]{\uCh\uvec{w}_h}^2
    \lesssim\$\tnorm[h]{(\uvec{w}_h,\underline{r}_h)},
  \end{equation}
  where the equality follows from \eqref{eq:Im.uGh.subset.Ker.uCh} and we have used \eqref{eq:inf-sup:1} to conclude.
  On the other hand, taking $(\uvec{v}_h,\underline{q}_h) = (\uvec{0},-\underline{\phi}_h)$ in \eqref{eq:Ah} and using the definition \eqref{eq:ah.bh.lh} of $\mathrm{b}_h$ along with the $(\cdot,\cdot)_{\CURL,h}$-orthogonality of the decomposition $\uvec{w}_h = \uvec{\psi}_h + \uGh\underline{\phi}_h$ and $\uGh\underline{\phi}_h\in\Ker\uCh$ (see \eqref{eq:Im.uGh.subset.Ker.uCh}) to infer $\mathrm{b}_h(\underline{\phi}_h,\uvec{w}_h) = \cancel{(\uGh\underline{\phi}_h,\uvec{\psi}_h)_{\CURL,h}} + \norm[\CURL,h]{\uGh\underline{\phi}_h}^2$, we get
  \begin{equation}\label{eq:inf-sup:4}
    \norm[\CURL,h]{\uGh\underline{\phi}_h}^2
    = \mathrm{A}_h((\uvec{w}_h,\underline{r}_h),(\uvec{0},-\underline{\phi}_h))
    \le\$\tnorm[h]{(\uvec{0},\underline{\phi}_h)}
    \le\$\norm[\CURL,h]{\uvec{w}_h}
    \le\$\tnorm[h]{(\uvec{w}_h,\underline{r}_h)}.
  \end{equation}
  The second inequality in \eqref{eq:inf-sup:4} is obtained writing
  $$
  \tnorm[h]{(\uvec{0},\underline{\phi}_h)}^2
  = \norm[\GRAD,h]{\underline{\phi}_h}^2 + \norm[\CURL,h]{\uGh\underline{\phi}_h}^2
  \lesssim \norm[\CURL,h]{\uGh\underline{\phi}_h}^2
  \le \norm[\CURL,h]{\uvec{w}_h}^2,
  $$
  where we have used, in this order, the definitions \eqref{eq:tnorm.h} of $\tnorm[h]{{\cdot}}$ and \eqref{eq:tnorm.CURL.GRAD.h} of $\tnorm[\GRAD,h]{{\cdot}}$,
  the discrete Poincar\'e inequality of \cite[Theorem 3]{Di-Pietro.Droniou:21*1},
  and the $(\cdot,\cdot)_{\CURL,h}$-orthogonality of the decomposition $\uvec{w}_h = \uvec{\psi}_h + \uGh\underline{\phi}_h$.
  Summing \eqref{eq:inf-sup:3} and \eqref{eq:inf-sup:4}, it is inferred that $\norm[\CURL,h]{\uvec{w}_h}^2
    \lesssim\$\tnorm[h]{(\uvec{w}_h,\underline{r}_h)}$ which, together with \eqref{eq:inf-sup:1} and \eqref{eq:inf-sup:2}, gives \eqref{eq:inf-sup} after simplification.
\end{proof}

\subsubsection{Convergence}\label{sec:theo-DDR:convergence}

\begin{proof}[Proof of Theorem \ref{thm:ddr:convergence}]
  By the Third Strang lemma \cite[Theorem 10]{Di-Pietro.Droniou:18} and Lemma \ref{lem:inf-sup} (with a slight modification for $k=0$, not detailed here, to make sure that $\underline{p}_h - \Igrad{h}p$ has zero average), it holds
  \begin{equation}\label{eq:err.est:basic}
    \tnorm[h]{(\uvec{u}_h - \Icurl{h}\bvec{u}, \underline{p}_h - \Igrad{h} p)}
    \lesssim\sup_{(\uvec{v}_h,\underline{q}_h)\in\Xcurl{h}\times\XgradO{h}\setminus\{(\uvec{0},\underline{0})\}}
    \frac{\mathcal{E}_h(\uvec{v}_h,\underline{q}_h)}{\tnorm[h]{(\uvec{v}_h,\underline{q}_h)}},
  \end{equation}
  where $\mathcal{E}_h:\Xcurl{h}\times\XgradO{h}\to\Real$ is the consistency error linear form such that
  \[
  \mathcal{E}_h(\uvec{v}_h,\underline{q}_h)
  \coloneq\ell_h(\bvec{f},\uvec{v}_h) - \mathrm{A}_h((\Icurl{h}\bvec{u},\Igrad{h}p),(\uvec{v}_h,\underline{q}_h)).
  \]
  To prove \eqref{eq:err.est}, it suffices to bound the right-hand side of \eqref{eq:err.est:basic}.
  The rest of the proof relies on consistency results established in \cite{Di-Pietro.Droniou:21*1} in the case of maximal regularity $s=k+1$; their adaptation to the generic case $1\le s\le k+1$, used here, is straightforward.
  Recalling the definitions \eqref{eq:ah.bh.lh} and \eqref{eq:Ah} of the discrete bilinear forms, and since $\bvec{f} = \CURL(\CURL\bvec{u}) + \GRAD p$ almost everywhere in $\Omega$, we have
  \begin{equation}\label{eq:err.est:decomposition}
  \begin{aligned}
    \mathcal{E}_h(\uvec{v}_h,\underline{q}_h)
    &=
    \underbrace{%
      \ell_h(\CURL(\CURL\bvec{u}),\uvec{v}_h)
      - (\uCh(\Icurl{h}\bvec{u}),\uCh\uvec{v}_h)_{\DIV,h}
    }_{\term_1}
    \\
    &\quad
    + \underbrace{%
      \ell_h(\GRAD p,\uvec{v}_h)
      - (\uGh(\Igrad{h}p),\uvec{v}_h)_{\CURL,h}
    }_{\term_2}
    + \underbrace{%
      (\uGh\underline{q}_h,\Icurl{h}\bvec{u})_{\CURL,h}.
    }_{\term_3}
  \end{aligned}
  \end{equation}
  Recalling the commutation property $\uCh(\Icurl{h}\bvec{w}) = \Idiv{h}(\CURL\bvec{w})$ valid for all $\bvec{w}\in\bvec{H}^2(\Omega)$,
  which is an easy consequence of the corresponding local relation proved in \cite[Lemma 4]{Di-Pietro.Droniou:21*1}, and replacing $\ell_h$ with its definition \eqref{eq:ah.bh.lh}, we have
  \[
  \term_1 = (\Icurl{h}(\CURL(\CURL\bvec{u})),\uvec{v}_h)_{\CURL,h}
  - (\Idiv{h}(\CURL\bvec{u}),\uCh\uvec{v}_h)_{\DIV,h}.
  \]
  Then, leveraging \cite[Theorem 10]{Di-Pietro.Droniou:21*1} as in the bound of the component $\mathcal{E}_{h,3}$ in the proof of \cite[Theorem 12]{Di-Pietro.Droniou:21*1}, we obtain
  \begin{equation}\label{eq:err.est:T1}
    |\term_1|\lesssim h^s\left(
    \seminorm[\bvec{H}^s(\Th)]{\CURL\bvec{u}}
    + \seminorm[\bvec{H}^{s+1}(\Th)]{\CURL\bvec{u}}
    + \seminorm[\bvec{H}^{(s,2)}(\Th)]{\CURL\CURL\bvec{u}}
    \right)\tnorm[\CURL,h]{\uvec{v}_h}.
  \end{equation}
  By \eqref{eq:rhs:irrotational.source} with $\psi=p$, we immediately have for the second term
  \begin{equation}\label{eq:err.est:T2}
    \term_2 = 0.
  \end{equation}
  Finally, for the third term, the adjoint consistency result for the gradient proved in \cite[Theorem 9]{Di-Pietro.Droniou:21*1} along with the fact that $\DIV\bvec{u}=0$ in $\Omega$ and $\bvec{u}\cdot\bvec{n}=0$ on $\partial\Omega$ readily yields
  \begin{equation}\label{eq:err.est:T3}
    |\term_3|\lesssim h^s\seminorm[\bvec{H}^{(s,2)}(\Th)]{\bvec{u}}\norm[\CURL,h]{\uGh\underline{q}_h}.
  \end{equation}
  Using \eqref{eq:err.est:T1}, \eqref{eq:err.est:T2}, and \eqref{eq:err.est:T3} to estimate the right-hand side of \eqref{eq:err.est:decomposition} and plugging the resulting bound into \eqref{eq:err.est:basic} proves \eqref{eq:err.est}.
\end{proof}

\subsection{Analysis of the VEM scheme}\label{sec:theo:VEM}

In this section we tackle the convergence analysis of the Virtual Element approach presented in Section \ref{sec:discre1}.

\subsubsection{Preliminary results}\label{sec:prel}

Let us start by reviewing some results that will be useful in the following. The proof of Lemma \ref{prop:int-nodal} can be found in \cite{Brenner-Sung}, see also 
\cite{Beirao-da-Veiga.Lovadina.ea:17*1,CangianiAPOS}, for any order $k$. The proofs of the other interpolation and scalar product stability results below can be found in \cite{edgeface,edgefacegeneral}, while the proof of Lemma \ref{prop:coerc} is provided in \cite{maxwellVEM}. 
We classically assume, in what follows, the following mesh property.

\begin{assumption}[Star-shaped property]\label{ass:star-shaped}
For all meshsize $h$, all the elements $T\in\Th$ and all the faces $F\in\Fh$ are uniformly star shaped with respect to a ball, and there exists a uniform positive constant $\gamma$ such that $h_E \ge \gamma h_T$ for all $E \in \ET$.
\end{assumption}

\smallskip\noindent
The discrete spaces proposed in Section \ref{sec:discre1} have optimal approximation properties, in terms of the associated polynomial degree, as shown below. We state the next three results only at the local level, the global counterpart following trivially by summing on all elements.
\begin{lemma}\label{prop:int-nodal}
Let $v \in H^s(\Omega)$, $\frac32 < s \le k+2$. 
Then, the nodal interpolant $v_I = {\cal I}^{\node}_{k+1} (v) \in V^{\node}_{k+1}$ (which, we recall, is the space $V^\node_{k+1,0}$ without the zero average condition) satisfies
$$
\norm[L^2(T)]{v - v_I} + h_T \seminorm[H^1(T)]{v - v_I} \lesssim h_T^{s} \seminorm[H^s(T)]{v} \quad \forall T \in \Th.
$$
\end{lemma}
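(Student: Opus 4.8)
The goal is a standard polynomial approximation estimate for the VEM nodal interpolant, so the plan is to reduce it to a local projection-plus-stability argument, exactly as in the references cited (\cite{Brenner-Sung,Beirao-da-Veiga.Lovadina.ea:17*1,CangianiAPOS}). First I would fix an element $T\in\Th$ and recall that, since $v\in H^s(\Omega)$ with $s>\tfrac32$, its restriction $v_{|T}$ is continuous on $\overline{T}$ with continuous edge traces, so all the degrees of freedom \eqref{dof-3dnk-1}--\eqref{dof-3dnk-4} of $v$ are well-defined and $v_I={\cal I}^{\node}_{k+1}(v)_{|T}\in V^{\node}_{k+1}(T)$ makes sense. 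The workhorse is the classical observation that the interpolant preserves polynomials of degree $\le k+1$: if $q\in\Poly{k+1}(T)$ then $q\in V^{\node}_{k+1}(T)$ (its Laplacian lies in $\Poly{k-1}(T)$, its edge restrictions are polynomials of degree $\le k+1$, and the serendipity constraints are satisfied trivially since $\GRAD_F q=\GRAD_F\Pi^{\node}_{S,F}q$ on those components by unisolvence), and ${\cal I}^{\node}_{k+1}(q)=q$ because the DoFs are unisolvent.

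Second, I would estimate $\seminorm[H^1(T)]{v-v_I}$. Write $v-v_I=(v-q)-(v_I-q)=(v-q)-{\cal I}^{\node}_{k+1}(v-q)$ for an arbitrary $q\in\Poly{k+1}(T)$, using polynomial invariance. The term $\seminorm[H^1(T)]{v-q}$ is controlled by $h_T^{s-1}\seminorm[H^s(T)]{v}$ on choosing $q$ to be a suitable averaged Taylor / Dupont--Scott polynomial on $T$ (here Assumption \ref{ass:star-shaped} guarantees $T$ is star-shaped with respect to a ball, so the Bramble--Hilbert/Dupont--Scott machinery applies with constants depending only on the mesh-regularity parameters). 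For the second term I would invoke the $H^1$-stability of the nodal interpolant on $V^{\node}_{k+1}(T)$, namely $\seminorm[H^1(T)]{{\cal I}^{\node}_{k+1}w}\lesssim \seminorm[H^1(T)]{w}+\text{(lower-order scaled $L^2$ pieces)}$ for $w$ smooth enough; this is precisely the content of the stability results proved in \cite{edgeface,edgefacegeneral} (a scaling/inverse-inequality argument bounding each DoF functional of $w$ by appropriately scaled Sobolev norms of $w$, then bounding the associated virtual function using the minimal-energy characterisation of $V^{\node}_{k+1}(T)$). Combining, $\seminorm[H^1(T)]{v_I-q}=\seminorm[H^1(T)]{{\cal I}^{\node}_{k+1}(v-q)}\lesssim h_T^{s-1}\seminorm[H^s(T)]{v}$ after also using a scaled trace inequality to absorb the boundary DoF contributions, and the triangle inequality gives the $\seminorm[H^1(T)]{v-v_I}\lesssim h_T^{s-1}\seminorm[H^s(T)]{v}$ bound.

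Third, for the $L^2$-estimate I would either repeat the same argument with the $L^2$-stability of ${\cal I}^{\node}_{k+1}$ and an $L^2$ Bramble--Hilbert estimate $\norm[L^2(T)]{v-q}\lesssim h_T^{s}\seminorm[H^s(T)]{v}$, or—more economically—use a Poincaré-type inequality on $T$ applied to $v-v_I$ after noting that the vertex DoFs force $v-v_I$ to vanish at the vertices (so a scaled Poincaré or Friedrichs inequality, valid on star-shaped domains with constant $\lesssim h_T$, upgrades the already-proven $H^1$ bound to the $L^2$ bound with the extra factor $h_T$). Summing over $T\in\Th$ and using $h_T\le h$ yields the global statement, though as stated only the local version is needed.

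\textbf{Main obstacle.} The only genuinely nontrivial ingredient is the interpolation-operator stability on the virtual space $V^{\node}_{k+1}(T)$, i.e.\ bounding each degree-of-freedom functional of a smooth function by scaled Sobolev norms and then controlling the norm of the resulting virtual element function. This requires the trace and inverse-type inequalities on polygonal faces and polyhedral elements uniform in the mesh-regularity parameters, together with the serendipity/minimal-energy characterisation of the space; all of this is established in \cite{edgeface,edgefacegeneral,Brenner-Sung}, so in the write-up I would cite those results rather than reproduce the scaling arguments, and the proof of Lemma \ref{prop:int-nodal} can be reduced to the two-line Bramble--Hilbert-plus-stability argument above.
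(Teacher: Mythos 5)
The paper does not actually prove this lemma: in Section \ref{sec:prel} it simply refers to \cite{Brenner-Sung} (see also \cite{Beirao-da-Veiga.Lovadina.ea:17*1,CangianiAPOS}) for the proof. Your outline — reproduction of $\Poly{k+1}(T)$ by the DoF interpolant (which on faces uses that the serendipity conditions \eqref{PiS-nod2} are unisolvent on $\Poly{k+1}(F)$, guaranteed by the choice of $\eta_F$), a Dupont--Scott polynomial on the star-shaped element, and stability of the interpolation operator, the latter cited rather than reproved — is precisely the standard argument of those references, so at the level of detail the paper itself adopts your plan is acceptable; note only that for the \emph{nodal} interpolant the stability results are those of \cite{Brenner-Sung,CangianiAPOS,Beirao-da-Veiga.Lovadina.ea:17*1}, while \cite{edgeface,edgefacegeneral} concern the edge and face interpolants.

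There is, however, one step that would fail as written: the ``more economical'' variant of the $L^2$ estimate, in which you anchor a scaled Poincar\'e--Friedrichs inequality on the fact that $v-v_I$ vanishes at the vertices. Point evaluations are not bounded functionals on $H^1(T)$ in dimension two or three, so no Poincar\'e-type inequality with a constant uniform in $h_T$ (or indeed any constant) can be based on vertex values alone; in 3D even the edge moments \eqref{dof-3dnk-2} of an $H^1$ function are not controlled, so they cannot serve as the anchor either. You must therefore use your first option: an $L^2$ Bramble--Hilbert bound $\norm[L^2(T)]{v-q}\lesssim h_T^s\seminorm[H^s(T)]{v}$ combined with $L^2$-stability of ${\cal I}^{\node}_{k+1}$ on $v-q$ (whose DoF functionals are bounded using $s>\frac32$ for the vertex and edge data), or alternatively the duality argument of \cite{Brenner-Sung}. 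With that correction the proof matches the cited literature.
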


\begin{lemma}\label{prop:int-edge}
  Let $\vv \in\Hscurl{s}{\Omega}$, $\frac12 < s \le k+1$, with tangential component $\vv_{|E}\cdot\tt_E$ integrable on each edge $E\in\Eh$.
  Then, the edge interpolant $\vv_I = {\cal I}^{\edge}_k (\vv)\in V^{\edge}_{k}$ satisfies
$$
\norm[\Hcurl{T}]{\vv - \vv_I} \lesssim C h_T^{s} \, \Big(
\norm[\bvec{H}^s(T)]{\vv} + \seminorm[\bvec{H}^s(T)]{\CURL\vv}
\Big)
\quad \forall T \in \Th.
$$
If $s > \frac32$, then the right-hand side can be substituted with $C h_T^{s} \, \seminorm[\bvec{H}^s(T)]{\vv}$.
\end{lemma}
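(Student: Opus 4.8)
The plan is to prove the estimate on a single element $T\in\Th$, the global bound following by summing squares. First I would split the error $\vv-\vv_I$ via the $L^2$-orthogonal projector $\vlproj{k}{T}$ onto $\vPoly{k}(T)$ and the commutation/consistency structure of the space $V^\edge_k(T)$: since the DoFs of $V^\edge_k(T)$ include moments against $\vPoly{k}(T)$, against $\CURL$ of $\vPoly{k}(T)$, etc., the interpolant $\vv_I$ reproduces polynomials of degree $\le k$ exactly, so that $(\vv-\vv_I)_I = \vv-\vv_I$ can be replaced, through a triangle inequality, by $\vv - \vlproj{k}{T}\vv$ plus $(\vlproj{k}{T}\vv - \vv)_I$. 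The first term is controlled by standard polynomial approximation on star-shaped domains (Assumption \ref{ass:star-shaped}), giving $\|\vv-\vlproj{k}{T}\vv\|_{\bvec L^2(T)}\lesssim h_T^s\seminorm[\bvec H^s(T)]{\vv}$, and similarly for $\CURL\vv$. The second, interpolation-of-the-error term, is where the stability of the DoF functionals enters.

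The key step is therefore a stability bound for the DoF-interpolator ${\cal I}^\edge_k$ itself on $\Hscurl{s}{T}$: one shows $\|\ww_I\|_{\Hcurl{T}} \lesssim h_T^{-3/2}\big(\text{scaled DoFs of }\ww\big)$ and then that each scaled DoF of $\ww$ is bounded by a trace/interior norm of $\ww$ controlled by $\|\ww\|_{\bvec H^s(T)}+\seminorm[\bvec H^s(T)]{\CURL\ww}$ — here the edge DoFs \eqref{dof-3dek-1} require the edge-trace integrability hypothesis on $\vv_{|E}\cdot\tt_E$, the face DoFs \eqref{dof-3dek-2}, \eqref{dof-3dek-3} require $s>\tfrac12$ face-trace theorems, and the volume DoFs \eqref{dof-3dek-4}, \eqref{dof-3dek-5} involve only $L^2(T)$-type quantities of $\ww$ and $\CURL\ww$. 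Applying this stability to $\ww = \vv - \vlproj{k}{T}\vv$ and combining with polynomial approximation yields the claim. The stability of the $V^\edge_k(T)$-scalar product (the consistency \eqref{consiE3k} together with the spectral equivalence $s^T(\cdot,\cdot)\simeq\|\cdot\|_{\bvec L^2(T)}^2$ on the nonpolynomial part) is invoked to pass from DoF norms to $\bvec L^2(T)$ norms of virtual functions; this is exactly the content of the results of \cite{edgeface,edgefacegeneral} cited just before the lemma.

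For the refinement when $s>\tfrac32$, the point is that the edge and face traces of $\vv$ are then controlled by $\seminorm[\bvec H^s(T)]{\vv}$ alone (Sobolev embedding $\bvec H^s(T)\hookrightarrow \bvec H^{s-1/2}(\partial T)\hookrightarrow\bvec L^2(\partial T)$, and a further step down to edges using $s-1>\tfrac12$ on faces), so the $\CURL\vv$ contribution, which a priori only showed up to control the curl-related DoFs \eqref{dof-3dek-3}, \eqref{dof-3dek-5} and the $\CURL$-part of the graph norm, can be reabsorbed: one estimates $\|\CURL(\vv-\vv_I)\|_{\bvec L^2(T)}$ by noting $\CURL\vv_I$ depends on $\vv$ only through its tangential face traces and the volume moments \eqref{dof-3dek-5} of $\CURL\vv$, hence a Bramble--Hilbert / inverse-inequality argument on $\CURL\vv_I - \vlproj{k}{T}(\CURL\vv)$ together with $\|\CURL\vv-\vlproj{k}{T}\CURL\vv\|\lesssim h_T^{s-1}\seminorm[\bvec H^{s-1}(T)]{\CURL\vv}\lesssim h_T^{s-1}\seminorm[\bvec H^s(T)]{\vv}$ (by the same embedding $\bvec H^s(T)\hookrightarrow \bvec H^{s-1}(T)$ with a loss absorbed into constants after rescaling) closes the estimate with only $\seminorm[\bvec H^s(T)]{\vv}$ on the right.

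\medskip\noindent
The main obstacle will be the DoF-stability step: controlling the edge moments \eqref{dof-3dek-1} requires a careful 1D trace argument (the hypothesis that $\vv_{|E}\cdot\tt_E$ is merely integrable, not $H^s$, is exactly what forces one to be careful here), and controlling the face moments \eqref{dof-3dek-2}, \eqref{dof-3dek-3} at the threshold regularity $s>\tfrac12$ requires the sharp trace theory for $\Hscurl{s}{T}$ on polyhedra — precisely the delicate point flagged in the paragraph on trace theorems after the definition of $\Hscurl{s}{Y}$. For this reason I would not reprove these trace stabilities from scratch but invoke \cite{edgeface,edgefacegeneral} (as the paper does), and concentrate the written proof on the decomposition into polynomial approximation plus interpolation-of-the-error, and on the $s>\tfrac32$ simplification.
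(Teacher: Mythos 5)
The paper does not actually prove this lemma: it is listed among the preliminary results at the start of Section \ref{sec:prel}, and its proof is explicitly attributed to \cite{edgeface,edgefacegeneral}, so there is no internal argument to compare yours against. Your overall plan --- insert the best polynomial approximation $\vlproj{k}{T}\vv$, use polynomial reproduction of ${\cal I}^{\edge}_k$ (legitimate, since $\vPoly{k}(T)\subset V^{\edge}_k(T)$ and the DoFs are unisolvent), and reduce the remaining term to a stability bound of the DoF interpolator, whose delicate edge/face trace ingredients you would in turn take from \cite{edgeface,edgefacegeneral} --- is the standard route and essentially the one followed in those references, so as a citation-plus-sketch it is consistent with how the paper treats the statement. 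Be aware, though, that the step you summarise as $\norm[\Hcurl{T}]{\ww_I}\lesssim h_T^{-3/2}(\text{scaled DoFs})$ is not a mere scaling exercise: functions of $V^{\edge}_k(T)$ are known only through local PDE problems, and proving $\bvec{L}^2$/$\Hcurl{T}$ stability of a virtual function in terms of its DoFs is precisely the main technical content of the cited works, so your argument is a reduction to those papers rather than an independent proof (which you acknowledge).

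The one place where you do attempt an actual argument --- the refinement for $s>\frac32$ --- does not reach the stated estimate. Bounding the curl contribution through $\norm[\bvec{L}^2(T)]{\CURL\vv-\vlproj{k}{T}(\CURL\vv)}\lesssim h_T^{s-1}\seminorm[\bvec{H}^{s-1}(T)]{\CURL\vv}\lesssim h_T^{s-1}\seminorm[\bvec{H}^{s}(T)]{\vv}$ yields a rate $h_T^{s-1}$, one full power of $h_T$ short of the claimed $h_T^{s}\seminorm[\bvec{H}^{s}(T)]{\vv}$, and this deficit cannot be ``absorbed into constants after rescaling'': it is $h$-dependent and would degrade the final convergence rate (indeed, the whole point of the second statement, as used in the remark following Theorem \ref{thm:vem:convergence}, is to keep the $h^{s}$ rate while dropping the regularity requirement on $\CURL\vv$). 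Establishing that refinement requires the finer arguments of \cite{edgeface,edgefacegeneral} (sharper trace estimates and the structure of the interpolation operator), not the Sobolev-embedding shortcut you outline; as written, that part of your plan would fail.
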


We furthermore have the following stability result for the discrete scalar products.
\begin{lemma}\label{prop:stab}
It holds
$$
\norm[\bvec{L}^2(T)]{\vv}^2 \,\lesssim\, [\vv , \vv]_{V^\bullet_k(T)} \,\lesssim \,  \norm[\bvec{L}^2(T)]{\vv}^2 \quad \forall T \in \Th, \ \forall \vv \in V^\bullet_k(T),
$$
where, as usual, the symbol $V^\bullet_k(T)$ denotes either $V^{\edge}_{k}(T)$ or $V^{\face}_{k}(T)$.
\end{lemma}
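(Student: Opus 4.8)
The statement is the standard norm-equivalence for VEM discrete $L^2$-products: the bilinear form $[\cdot,\cdot]_{V^\bullet_k(T)}$ is spectrally equivalent to the $\bvec{L}^2(T)$-inner product on $V^\bullet_k(T)$, uniformly in $h$ and in the mesh-regularity parameters. The proof I would give follows the usual two-ingredient recipe for VEM stability. First I would use the Pythagorean splitting induced by the $L^2$-orthogonal projector $\vlproj{k}{T}$: for any $\vv\in V^\bullet_k(T)$, writing $\vv = \vlproj{k}{T}\vv + (I-\vlproj{k}{T})\vv$, we have
\[
\norm[\bvec{L}^2(T)]{\vv}^2 = \norm[\bvec{L}^2(T)]{\vlproj{k}{T}\vv}^2 + \norm[\bvec{L}^2(T)]{(I-\vlproj{k}{T})\vv}^2,
\]
and by definition $[\vv,\vv]_{V^\bullet_k(T)} = \norm[\bvec{L}^2(T)]{\vlproj{k}{T}\vv}^2 + s^T\big((I-\vlproj{k}{T})\vv,(I-\vlproj{k}{T})\vv\big)$. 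Thus the problem reduces to showing that $s^T(\bvec{w},\bvec{w})$ is equivalent to $\norm[\bvec{L}^2(T)]{\bvec{w}}^2$ for all $\bvec{w}$ in the range of $(I-\vlproj{k}{T})$ restricted to $V^\bullet_k(T)$, i.e., for virtual functions with vanishing $L^2$-projection onto $\vPoly{k}(T)$.

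The second ingredient is a scaling/compactness (or direct trace-inequality) argument controlling the stabilisation form by the $\bvec{L}^2(T)$ norm on such functions. For the choice \eqref{eq:vem:sT}, $s^T$ is $h_T^3$ times a sum of squared DoF evaluations, each scaled to behave like a nodal value; by a standard homogeneity argument one normalises to $h_T\simeq 1$ and invokes the fact that, on the reference-size element, all norms on the finite-dimensional space $V^\bullet_k(T)$ are equivalent — the DoF-vector norm, the $\bvec{L}^2$ norm, etc. — with constants depending only on the shape-regularity (Assumption \ref{ass:star-shaped}) through a uniform bound on the number of faces/edges per element. The unisolvence of the DoFs is what makes the DoF-vector norm genuinely a norm; combined with the fact that $(I-\vlproj{k}{T})$ has finite-dimensional range, one gets $s^T(\bvec{w},\bvec{w})\simeq \norm[\bvec{L}^2(T)]{\bvec{w}}^2$ on that range. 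The alternative stabilisations in Remark \ref{rem:stab.VEM} are handled analogously, invoking the edge/face $L^2$-projection bounds and discrete inverse inequalities as indicated there (cf.\ the DDR argument of \cite[Lemma 5]{Di-Pietro.Droniou:21*1}). Assembling the pieces,
\[
\norm[\bvec{L}^2(T)]{\vv}^2 = \norm[\bvec{L}^2(T)]{\vlproj{k}{T}\vv}^2 + \norm[\bvec{L}^2(T)]{(I-\vlproj{k}{T})\vv}^2 \simeq \norm[\bvec{L}^2(T)]{\vlproj{k}{T}\vv}^2 + s^T\big((I-\vlproj{k}{T})\vv,(I-\vlproj{k}{T})\vv\big) = [\vv,\vv]_{V^\bullet_k(T)},
\]
which is the claim.

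\textbf{Main obstacle.} The genuinely non-trivial point is the uniformity of the equivalence constants with respect to the mesh: one must ensure that the scaling argument does not hide a dependence on the number of DoFs or on degenerating element geometry. This is exactly where Assumption \ref{ass:star-shaped} enters, bounding the number of faces and edges per element and controlling the aspect ratios, so that the finite-dimensional norm-equivalence on a family of star-shaped reference elements holds with a single constant. For the VEM edge and face spaces, a further subtlety is that $V^\bullet_k(T)$ is defined implicitly (through PDE constraints and boundary traces in $SV^\edge_k(F)$, $SV^\node_{k+1}(F)$), so one cannot literally map to a single fixed reference element; the clean way around this is to argue abstractly via a compactness/normal-families argument over the class of admissible $(T,\{F\})$, or — more in line with VEM practice — to cite the stability estimates of \cite{edgeface,edgefacegeneral} and \cite{maxwellVEM}, where precisely this technical analysis (including the serendipity modifications) has been carried out. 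Given that the excerpt explicitly points to those references for the proof, I would present the two-line reduction above and then defer the core scaling estimate to that literature.
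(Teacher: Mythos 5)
Your proposal is correct and is consistent with what the paper actually does: the paper gives no in-text proof of this lemma, stating only that the scalar-product stability results are proved in the cited references (\cite{edgeface,edgefacegeneral}), which is exactly where you defer the core scaling estimate after your (correct) Pythagorean reduction to the equivalence of $s^T$ with the $\bvec{L}^2(T)$ norm on the range of $I-\vlproj{k}{T}$. No gap to flag.
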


Finally, we state a Poincar\'e inequality for the curl.
\begin{lemma}\label{prop:coerc}
Let $Z_h$ be the orthogonal in $V^\edge_k$ of the image of the gradient, that is
\[
Z_h = \left\{ \vv \in V^{\edge}_{k}\st [\vv,\GRAD w]_{V^{\edge}_{k}} = 0 \quad \forall w \in V^{\node}_{k+1,0} \right\} .
\]
Then, it holds
$$
\norm[\bvec{L}^2(\Omega)]{\vv} \lesssim \norm[\bvec{L}^2(\Omega)]{\CURL \vv} \qquad \forall \vv \in Z_h .
$$
\end{lemma}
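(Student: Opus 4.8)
The plan is to reduce the claim to the continuous Poincaré inequality for the curl, using three ingredients already available: the exactness of the VEM complex and its commuting-diagram property (Remark \ref{rem:ES} and the discussion following it), the $\bvec{L}^2$-equivalence of the discrete scalar product (Lemma \ref{prop:stab}), and the interpolation estimate of Lemma \ref{prop:int-edge}. Fix $\vv_h\in Z_h$. By Lemma \ref{prop:stab} it suffices to bound $[\vv_h,\vv_h]_{V^\edge_k}$. Since $V^\edge_k\subset\Hcurl{\Omega}$ and $V^\face_k\subset\Hdiv{\Omega}$ (Remark \ref{rem:ES}), the field $\bvec{g}\coloneq\CURL\vv_h$ lies in $\Hdiv{\Omega}$ and is divergence-free; by exactness of the continuous de Rham complex on the (trivial-topology) domain $\Omega$, and after subtracting the gradient of the solution of an auxiliary Neumann problem, there is $\bvec{w}\in\Hcurl{\Omega}$ with $\CURL\bvec{w}=\bvec{g}$, $\DIV\bvec{w}=0$ in $\Omega$, and $\bvec{w}\cdot\normal=0$ on $\partial\Omega$. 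The continuous Poincaré inequality for the curl gives $\norm[\bvec{L}^2(\Omega)]{\bvec{w}}\lesssim\norm[\bvec{L}^2(\Omega)]{\bvec{g}}=\norm[\bvec{L}^2(\Omega)]{\CURL\vv_h}$, and the fractional elliptic regularity theory for $\Hcurl\cap\Hdiv$ fields on Lipschitz polyhedra yields $\bvec{w}\in\bvec{H}^s(\Omega)$ for some $s>\tfrac12$, with $\norm[\bvec{H}^s(\Omega)]{\bvec{w}}\lesssim\norm[\bvec{L}^2(\Omega)]{\CURL\vv_h}$ and edge-integrable tangential traces, so that $\bvec{w}$ belongs to the domain of ${\cal I}^\edge_k$.

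Next I would set $\bvec{w}_I\coloneq{\cal I}^\edge_k(\bvec{w})\in V^\edge_k$. By Lemma \ref{prop:int-edge} (using an inverse inequality to absorb $\CURL\bvec{w}=\CURL\vv_h$, which is piecewise polynomial) and a triangle inequality, $\norm[\bvec{L}^2(\Omega)]{\bvec{w}_I}\lesssim\norm[\Hcurl{\Omega}]{\bvec{w}}\lesssim\norm[\bvec{L}^2(\Omega)]{\CURL\vv_h}$. The commuting-diagram property recalled after Remark \ref{rem:ES} gives $\CURL\bvec{w}_I={\cal I}^\face_k(\CURL\bvec{w})={\cal I}^\face_k(\CURL\vv_h)=\CURL\vv_h$, the last identity because $\CURL\vv_h\in V^\face_k$ and ${\cal I}^\face_k$ acts as the identity on $V^\face_k$. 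Therefore $\CURL(\vv_h-\bvec{w}_I)=\bvec{0}$, and exactness of the VEM complex produces $\phi_h\in V^\node_{k+1,0}$ with $\vv_h-\bvec{w}_I=\GRAD\phi_h$.

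To conclude, I would use Lemma \ref{prop:stab} and the decomposition $\vv_h=\bvec{w}_I+\GRAD\phi_h$ to write
\[
\norm[\bvec{L}^2(\Omega)]{\vv_h}^2\lesssim[\vv_h,\vv_h]_{V^\edge_k}=[\vv_h,\bvec{w}_I]_{V^\edge_k}+[\vv_h,\GRAD\phi_h]_{V^\edge_k}=[\vv_h,\bvec{w}_I]_{V^\edge_k},
\]
the last term vanishing since $\vv_h\in Z_h$ and $\phi_h\in V^\node_{k+1,0}$. A Cauchy--Schwarz inequality for the (positive definite, by Lemma \ref{prop:stab}) bilinear form $[\cdot,\cdot]_{V^\edge_k}$ then gives $[\vv_h,\vv_h]_{V^\edge_k}^{1/2}\le[\bvec{w}_I,\bvec{w}_I]_{V^\edge_k}^{1/2}\lesssim\norm[\bvec{L}^2(\Omega)]{\bvec{w}_I}\lesssim\norm[\bvec{L}^2(\Omega)]{\CURL\vv_h}$, which combined with the previous display proves the inequality. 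The main obstacle is the continuous lifting step: obtaining $\bvec{w}$ with the boundary condition $\bvec{w}\cdot\normal=0$, the just-past-$\tfrac12$ Sobolev regularity, and the edge-integrability of its tangential traces on a general Lipschitz polyhedral domain, which is exactly the point requiring the fractional regularity theory for vector fields in $\Hcurl\cap\Hdiv$ (everything else is an application of the complex structure and of Lemmas \ref{prop:stab} and \ref{prop:int-edge}).
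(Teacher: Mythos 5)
The paper does not actually prove this lemma: Section \ref{sec:prel} defers it entirely to the reference \cite{maxwellVEM}, so your proposal can only be judged on its own merits. Its skeleton -- continuous divergence-free lifting $\bvec{w}$ of $\CURL\vv_h$ with $\bvec{w}\cdot\normal=0$, interpolation $\bvec{w}_I={\cal I}^{\edge}_k(\bvec{w})$, the commuting property $\CURL\bvec{w}_I={\cal I}^{\face}_k(\CURL\bvec{w})=\CURL\vv_h$, exactness to write $\vv_h-\bvec{w}_I=\GRAD\phi_h$, and the discrete orthogonality plus Lemma \ref{prop:stab} to conclude -- is indeed the natural route and is in the spirit of the argument in the cited reference; the algebraic part (orthogonality, Cauchy--Schwarz for $[\cdot,\cdot]_{V^\edge_k}$, exactness on a topologically trivial domain) is fine.

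There is, however, a genuine gap in the two analytic steps that you yourself identify as the crux, and your proposed fixes do not close it. First, you assert that $\CURL\vv_h$ is piecewise polynomial and invoke an inverse inequality to absorb it; this is false: the definition \eqref{edge3} of $V^{\edge}_k(T)$ only imposes $\CURL(\CURL\vv)\in\vPoly{k}(T)$, so $\CURL\vv_h$ is a genuinely \emph{virtual} field of $V^{\face}_k(T)$. Consequently the bound $\norm[\bvec{L}^2(\Omega)]{\bvec{w}_I}\lesssim\norm[\bvec{L}^2(\Omega)]{\CURL\vv_h}$ cannot be obtained from Lemma \ref{prop:int-edge} as written, since its right-hand side contains $\seminorm[\bvec{H}^s(T)]{\CURL\bvec{w}}=\seminorm[\bvec{H}^s(T)]{\CURL\vv_h}$, and no estimate of the $H^s$ (or $L^p$, $p>2$) norm of a virtual face function by its $L^2$ norm is available in the paper (such inverse estimates for VEM spaces exist in the literature but require proof and assumptions, and are precisely part of what \cite{maxwellVEM} supplies). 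Second, the well-posedness of ${\cal I}^{\edge}_k(\bvec{w})$ does not follow from $\bvec{w}\in\bvec{H}^s(\Omega)$ with $s>\frac12$ alone: edges are codimension-two sets, so $H^s$ regularity with $s\in(\frac12,1]$ gives no trace there; the classical sufficient condition (cf.\ the paper's remark on the domain of $\Icurl{h}$ and \cite{Amrouche.Bernardi.ea:98}) is $\bvec{w}\in\bvec{H}^s(\Omega)$, $s>\frac12$, \emph{together with} $\CURL\bvec{w}\in\bvec{L}^p(\Omega)$, $p>2$ -- and here $\CURL\bvec{w}=\CURL\vv_h$ is again the virtual field whose $L^p$ control is exactly what is missing. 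So the overall strategy is sound and matches the standard one, but as written the proof does not close: you need either uniform inverse/integrability estimates for the virtual space $V^{\face}_k$, or a quasi-interpolation operator with weaker regularity requirements, before the lifting-plus-commutation argument can be completed.
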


\subsubsection{Stability and convergence}\label{sec:conv}

We start by a simple lemma stating the consistency of the scalar product on the virtual spaces. 

\begin{lemma}[Consistency of the scalar products]\label{lem:scal} 
  Let $T\in\Th$ and the symbol $V^\bullet_k(T)$ represent either $V^{\edge}_{k}(T)$ or $V^{\face}_{k}(T)$.
  Let $\var \in\bvec{H}^s(T)$, $0 \le s \le k+1$, and $\var_h \in V^\bullet_k(T)$.
  Then, for all $\ww_h \in V^\bullet_k(T)$, it holds
  $$
  \int_T \var\cdot\ww_h - [\var_h,\ww_h]_{V^\bullet_k(T)} \lesssim \Big(
  \norm[\bvec{L}^2(T)]{\var - \var_h}
  + h_T^s \seminorm[\bvec{H}^s(T)]{\var}
  \Big) \norm[\bvec{L}^2(T)]{\ww_h}.
  $$
\end{lemma}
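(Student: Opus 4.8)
The plan is to exploit the splitting of the discrete scalar product into its consistent (polynomial) part and its stabilisation part, and to control each separately. Write
\[
\int_T \var\cdot\ww_h - [\var_h,\ww_h]_{V^\bullet_k(T)}
= \underbrace{\int_T \var\cdot\ww_h - \int_T \vlproj{k}{T}\var\cdot\vlproj{k}{T}\ww_h}_{\term_1}
- \underbrace{s^T\big((I-\vlproj{k}{T})\var_h,(I-\vlproj{k}{T})\ww_h\big)}_{\term_2}
+ \underbrace{\int_T \vlproj{k}{T}\var\cdot\vlproj{k}{T}\ww_h - \int_T \vlproj{k}{T}\var_h\cdot\vlproj{k}{T}\ww_h}_{\term_3}\,,
\]
where I have added and subtracted $\int_T \vlproj{k}{T}\var\cdot\vlproj{k}{T}\ww_h$ and used that the polynomial part of $[\var_h,\ww_h]_{V^\bullet_k(T)}$ is $\int_T\vlproj{k}{T}\var_h\cdot\vlproj{k}{T}\ww_h$; note $\term_2$ is exactly (minus) the stabilisation contribution since $(I-\vlproj{k}{T})\var_h - \vlproj{k}{T}((I-\vlproj{k}{T})\var_h)$ simplifies, or more directly I keep the stabilisation term as written in the definition. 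I would combine $\term_1$ and $\term_3$ by first noting that, using self-adjointness and idempotency of $\vlproj{k}{T}$,
\[
\term_1 = \int_T (\var - \vlproj{k}{T}\var)\cdot\ww_h
= \int_T (\var - \vlproj{k}{T}\var)\cdot(\ww_h - \vlproj{k}{T}\ww_h)\,,
\]
which by Cauchy--Schwarz and the polynomial approximation estimate $\norm[\bvec{L}^2(T)]{\var - \vlproj{k}{T}\var}\lesssim h_T^s\seminorm[\bvec{H}^s(T)]{\var}$ (valid for $0\le s\le k+1$ on star-shaped elements, Assumption \ref{ass:star-shaped}) gives $|\term_1|\lesssim h_T^s\seminorm[\bvec{H}^s(T)]{\var}\,\norm[\bvec{L}^2(T)]{\ww_h}$.

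For $\term_3$, factor as $\int_T \vlproj{k}{T}(\var - \var_h)\cdot\vlproj{k}{T}\ww_h$ and bound it by $\norm[\bvec{L}^2(T)]{\var - \var_h}\,\norm[\bvec{L}^2(T)]{\ww_h}$ using that $\vlproj{k}{T}$ is an $L^2$-orthogonal projector (hence 1-Lipschitz). The stabilisation term $\term_2$ is the only place the virtual nature enters: by the stability bound of Lemma \ref{prop:stab} — equivalently by the construction of $s^T$, which is bounded by $\norm[\bvec{L}^2(T)]{\cdot}^2$ on the kernel-complement through a discrete inverse inequality — one has $|s^T(\bvec{a},\bvec{b})|\lesssim \norm[\bvec{L}^2(T)]{\bvec{a}}\,\norm[\bvec{L}^2(T)]{\bvec{b}}$ for $\bvec{a},\bvec{b}\in V^\bullet_k(T)$, so with $\bvec{a}=(I-\vlproj{k}{T})\var_h$ and $\bvec{b}=(I-\vlproj{k}{T})\ww_h$ and the 1-Lipschitz property of $I-\vlproj{k}{T}$,
\[
|\term_2|\lesssim \norm[\bvec{L}^2(T)]{(I-\vlproj{k}{T})\var_h}\,\norm[\bvec{L}^2(T)]{\ww_h}
\le \norm[\bvec{L}^2(T)]{\var_h - \vlproj{k}{T}\var}\,\norm[\bvec{L}^2(T)]{\ww_h}\,,
\]
where in the last step I used that $\vlproj{k}{T}\var$ is a polynomial competitor for the best approximation defining the projection of $\var_h$. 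Then split $\norm[\bvec{L}^2(T)]{\var_h - \vlproj{k}{T}\var}\le \norm[\bvec{L}^2(T)]{\var_h - \var} + \norm[\bvec{L}^2(T)]{\var - \vlproj{k}{T}\var}\lesssim \norm[\bvec{L}^2(T)]{\var - \var_h} + h_T^s\seminorm[\bvec{H}^s(T)]{\var}$.

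Collecting the three bounds and using $\norm[\bvec{L}^2(T)]{\cdot}$ as the common right factor yields the claimed estimate. The only genuinely delicate point — the ``main obstacle'' — is justifying the $L^2$-boundedness of the stabilisation $s^T$ on $V^\bullet_k(T)$ uniformly in $h_T$: this is precisely the content of Lemma \ref{prop:stab} (proved in \cite{edgeface,edgefacegeneral}), and with the DoF-scaling convention adopted after \eqref{eq:vem:sT} it also follows directly from a discrete inverse inequality relating the scaled DoF-vector of a virtual function to its $L^2(T)$-norm; everything else is a routine combination of Cauchy--Schwarz, the triangle inequality, the 1-Lipschitz property of $L^2$-orthogonal projectors, and the standard polynomial approximation estimate on star-shaped elements. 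For the alternative stabilisations of Remark \ref{rem:stab.VEM} the same argument goes through verbatim, the $L^2$-boundedness being established there via discrete trace and inverse inequalities.
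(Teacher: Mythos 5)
Your proof is correct, but it takes a more explicit route than the paper's. The paper introduces $\var_\pi$, the best $\bvec{L}^2(T)$-approximation of $\var$ in $\vPoly{k}(T)$, writes $\int_T\var\cdot\ww_h-[\var_h,\ww_h]_{V^\bullet_k(T)}=\int_T(\var-\var_\pi)\cdot\ww_h+[\var_\pi-\var_h,\ww_h]_{V^\bullet_k(T)}$ using the polynomial consistency \eqref{consiE3k}, and then bounds the second term in one stroke by Cauchy--Schwarz for the discrete product and the stability of Lemma \ref{prop:stab}, so the stabilisation never has to be isolated. You instead unpack the product into its projection part and its stabilisation part and bound the three resulting pieces, which costs you one extra ingredient: the $\bvec{L}^2$-boundedness of $s^T$. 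As stated, ``$|s^T(\bvec{a},\bvec{b})|\lesssim\norm[\bvec{L}^2(T)]{\bvec{a}}\,\norm[\bvec{L}^2(T)]{\bvec{b}}$ for all $\bvec{a},\bvec{b}\in V^\bullet_k(T)$'' is not literally the content of Lemma \ref{prop:stab}; however, for the only arguments you actually use, namely $\bvec{a}=(I-\vlproj{k}{T})\var_h$ and $\bvec{b}=(I-\vlproj{k}{T})\ww_h$, it does follow from that lemma in one line: since $\vPoly{k}(T)\subset V^\bullet_k(T)$ (an inclusion both proofs need), $\bvec{a}$ lies in the virtual space and satisfies $\vlproj{k}{T}\bvec{a}=\bvec{0}$, hence $[\bvec{a},\bvec{a}]_{V^\bullet_k(T)}=s^T(\bvec{a},\bvec{a})\lesssim\norm[\bvec{L}^2(T)]{\bvec{a}}^2$, and Cauchy--Schwarz for the positive semidefinite form $s^T$ gives the bound you invoke (alternatively, the DoF-scaling inverse inequality behind \eqref{eq:vem:sT}, proved in the references cited for Lemma \ref{prop:stab}, yields the general statement). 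With that step made explicit, your treatment of $\term_1$, $\term_2$, $\term_3$, the best-approximation comparison $\norm[\bvec{L}^2(T)]{\var_h-\vlproj{k}{T}\var_h}\le\norm[\bvec{L}^2(T)]{\var_h-\vlproj{k}{T}\var}$, and the final triangle-inequality assembly are all sound and recover exactly the paper's estimate. The trade-off is that the paper's argument is shorter and indifferent to the specific form of the stabilisation, while yours makes visible exactly where the projection error and the stabilisation each enter.
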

\begin{proof} 
  Let $\var_\pi$ be the best $L^2$ approximation in $\vPoly{k}(T)$ of $\var$.
  Then, using first property \eqref{consiE3k} and then Lemma \ref{prop:stab}, we find 
$$
\begin{aligned}
  \int_T \var\cdot\ww_h - [\var_h,\ww_h]_{V^\bullet_k(T)} &= 
  \int_T (\var-\var_\pi)\cdot\ww_h + [\var_\pi - \var_h,\ww_h]_{V^\bullet_k(T)} \\
  & \lesssim \Big( \norm[\bvec{L}^2(T)]{\var-\var_\pi} + \norm[\bvec{L}^2(T)]{\var_\pi - \var_h} \Big) 
  \norm[\bvec{L}^2(T)]{\ww_h} \\
  & \lesssim \Big( \norm[\bvec{L}^2(T)]{\var-\var_\pi} + \norm[\bvec{L}^2(T)]{\var - \var_h} \Big) \norm[\bvec{L}^2(T)]{\ww_h},
\end{aligned}
$$
where the last line is obtained introducing $\var$ in $\norm[\bvec{L}^2(T)]{\var_\pi - \var_h}$ and using a triangle inequality.
The lemma is concluded by standard polynomial approximation results on star shaped polytopes, see for instance \cite[Theorem 1.45]{Di-Pietro.Droniou:20}.
\end{proof}

We are now ready to prove the convergence result stated in Theorem \ref{thm:vem:convergence}.
\begin{proof}[Proof of Theorem \ref{thm:vem:convergence}]
Introducing the linear form ${\cal A}_h$ 
\begin{multline*}
  {\cal A}_h(\ww_h,r_h;\vv_h,q_h)
  \coloneq [\CURL \ww_h , \CURL \vv_h]_{V^{\face}_{k}} + [\GRAD r_h,\vv_h]_{V^{\edge}_{k}} - [\GRAD q_h,\ww_h]_{V^{\edge}_{k}} \\
  \forall(\ww_h,r_h), (\vv_h,q_h) \in \left(V^{\edge}_{k}\times V^{\node}_{k+1,0}\right)^2,
\end{multline*}
problem \eqref{eq:discr-pbl} can be equivalently written as: 
Find $(\uu_h, p_h) \in V^{\edge}_{k}\times V^{\node}_{k+1,0}$ such that
\begin{equation}\label{eq:P2}
{\cal A}_h(\uu_h,p_h;\vv_h,q_h) = [\ff_I,\vv_h]_{V^{\edge}_{k}}
\qquad \forall (\vv_h,q_h) \in V^{\edge}_{k}\times V^{\node}_{k+1,0}.
\end{equation}
The stability of problem \eqref{eq:P2} in the natural norms associated to this formulation ($\Hcurl{\Omega}$ norm for $V^{\edge}_{k}$ and $H^1(\Omega)$ norm for $V^{\node}_{k+1,0}$) follows from the standard theory of mixed methods \cite{Boffi.Brezzi.ea:13}.
The coercivity on the discrete kernel follows immediately from Lemma \ref{prop:coerc} and the stability of the scalar products (i.e., Lemma \ref{prop:stab}).
The inf-sup condition (in the natural norms of the problem) is a simple consequence of the exact complex property and Lemma \ref{prop:stab}:
For each $q_h \in V^{\node}_{k+1,0}$,
$$
\sup_{\vv_h \in V^{\edge}_{k}} \frac{[\GRAD q_h,\vv_h]_{V^{\edge}_{k}}}{\| \vv_h \|_{\Hcurl{\Omega}}}
\ge \frac{[\GRAD q_h,\GRAD q_h]_{V^{\edge}_{k}}}{\| \GRAD q_h \|_{L^2(\Omega)}}
\gtrsim \| \GRAD q_h \|_{L^2(\Omega)}.
$$
Therefore, given $\uu_I \in V^{\edge}_{k}$, $p_I \in V^{\node}_{k+1,0}$ the interpolants of $\uu$ and $p$ 
(with a slight modification for $\beta_F<0$ not detailed here to make sure that $p_I$ has zero average), respectively, we have the existence of $\vv_h \in V^{\edge}_{k}$, $q_h \in V^{\node}_{k+1,0}$ such that
\begin{equation}\label{eq:init:stab}
\left\{
\begin{aligned}
  & \norm[\Hcurl{\Omega}]{\uu_h - \uu_I}
  + \norm[H^1(\Omega)]{p_h - p_I}
  \le {\cal A}_h(\uu_h-\uu_I,p_h-p_I;\vv_h,q_h), \\
  &  \norm[\Hcurl{\Omega}]{\vv_h}
  + \norm[H^1(\Omega)]{q_h} \lesssim 1.
\end{aligned}
\right.
\end{equation}
We start from \eqref{eq:init:stab} and apply the discrete equation \eqref{eq:P2}. Afterwards, we recall the continuous equation \eqref{eq:strong} and substitute $\ff$ in terms of $\uu$ and $p$. We obtain
\begin{equation}\label{eq:main}
  \norm[\Hcurl{\Omega}]{\uu_h - \uu_I} + \norm[H^1(\Omega)]{p_h - p_I} \le 
       [\ff_I,\vv_h]_{V^{\edge}_{k}} - {\cal A}_h(\uu_I,p_I;\vv_h,q_h)
       = \term_1 + \term_2 + \term_3,
\end{equation}
where
$$
\begin{aligned}
& \term_1 \coloneq [(\CURL\CURL\uu)_I,\vv_h]_{V^{\edge}_{k}} - [\CURL \uu_I , \CURL \vv_h]_{V^{\face}_{k}} \, , \\
& \term_2 \coloneq [(\GRAD p)_I,\vv_h]_{V^{\edge}_{k}} - [\GRAD p_I,\vv_h]_{V^{\edge}_{k}} \, , \\
& \term_3 \coloneq - [\GRAD q_h,\uu_I]_{V^{\edge}_{k}} \, .
\end{aligned}
$$
We deal with the three terms separately. Introducing $\int_\Omega (\CURL \uu)\cdot(\CURL \vv_h)$ and integrating by parts, recalling that $\CURL\uu \times \nn$ vanishes on the boundary, gives $\term_1 = \term_{1,1} + \term_{1,2}$ where
$$
\begin{aligned}
\term_{1,1} &\coloneq [(\CURL\CURL\uu)_I,\vv_h]_{V^{\edge}_{k}} - \int_\Omega (\CURL\CURL\uu)\cdot\vv_h \, , \\
\term_{1,2} &\coloneq \int_\Omega (\CURL \uu)\cdot(\CURL \vv_h) - [\CURL \uu_I , \CURL \vv_h]_{V^{\face}_{k}} \, .
\end{aligned}
$$
We bound the square of $\term_{1,1}$ first by applying Lemma \ref{lem:scal} and recalling \eqref{eq:init:stab}, 
then using Lemma \ref{prop:int-edge}:
$$
\begin{aligned}
  \term_{1,1}^2 & \lesssim
  \sum_{T\in\Th} \Big(
  \norm[\bvec{L}^2(T)]{\CURL\CURL\uu - (\CURL\CURL\uu)_I}^2
  + h_T^{2s} \seminorm[\bvec{H}^s(T)]{\CURL\CURL\uu}^2
  \Big) \\
  & \lesssim \sum_{T\in\Th}  h_T^{2s} \big(
    \norm[\bvec{H}^s(T)]{\CURL\CURL\uu}^2
  + \seminorm[\bvec{H}^s(T)]{\CURL\ff}^2
  \big) \, , 
\end{aligned}
$$
where we also used $\CURL \CURL\CURL \bvec{u}=\CURL \ff$.
Term $\term_{1,2}$ is bounded following the same steps, leading to
$$
\term_{1,2}^2 \lesssim \sum_{T\in\Th}  h_T^{2s} 
\big( \norm[\bvec{H}^s(T)]{\CURL\uu}^2 + \seminorm[\bvec{H}^s(T)]{\CURL\CURL\uu}^2
\big) \, .
$$
By the commuting diagram property (see Remark \ref{rem:PBgrad}), it is trivial to check that $\term_2=0$, which is a key point in the present analysis and essential to the pressure-robustness of the scheme. Finally, the term $\term_3$ is bounded by using the continuous equation
$$
\term_3 = 
[\GRAD q_h,\uu_I]_{V^{\edge}_{k}} - \int_\Omega (\GRAD q_h)\cdot\uu
$$
and then again by Lemmas \ref{lem:scal} and \ref{prop:int-edge}, similarly to the previous terms:
$$
\term_3 \lesssim \sum_{T\in\Th} \Big(
\norm[\bvec{L}^2(T)]{\uu - \uu_I}^2
+ h_T^{2s}\seminorm[\bvec{H}^s(T)]{\uu}^2
\Big)
\lesssim \sum_{T\in\Th}  h_T^{2s} 
\left(\norm[\bvec{H}^s(T)]{\uu}^2 + \seminorm[\bvec{H}^s(T)]{\CURL\uu}^2\right) \, .
$$
The final result follows by combining all the bounds into \eqref{eq:main} and using $h_T \le h$ for all $T\in\Th$.
\end{proof}

By combining the above proposition with the interpolation estimates in Lemmas \ref{prop:int-edge} and \ref{prop:int-nodal}, we obtain the following result, which is more akin to the error estimates in the VEM literature.

\begin{corollary}[Error with respect to the continuous solution]\label{prop:conv2}
Let the solution $(\uu,p)$ of the continuous problem and the datum $\ff$ satisfy the following.
The functions $\uu$, $\CURL\uu$, $\CURL\CURL\uu$, $\ff$ and $\CURL\ff$ are in $\bvec{H}^s(\Th)$, $p \in H^{s+1}(\Th)$, $\frac12 < s < k+1$, and the tangential components of $\uu$, $\CURL\CURL\uu$, and $\ff$ on all edges are integrable.
Then it holds
$$
\begin{aligned}
& \norm[\Hcurl{\Omega}]{\uu - \uu_h} \lesssim
h^{s} 
\big(
\seminorm[\Hscurl{s}{\Th}]{\uu}
+ \seminorm[\bvec{H}^s(\Th)]{\CURL\CURL\uu}
+ \seminorm[\bvec{H}^s(\Th)]{\CURL\ff}
\big) \, , \\
& \seminorm[H^1(\Omega)]{p - p_h} \lesssim h^{s} 
\big(
\seminorm[\Hscurl{s}{\Th}]{\uu}
+ \seminorm[\bvec{H}^s(\Th)]{\CURL\CURL\uu}
+ \seminorm[H^{s+1}(\Th)]{p}
\big) \, .
\end{aligned}
$$ 
\end{corollary}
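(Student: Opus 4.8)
The plan is to reduce the error with respect to the continuous solution to the error with respect to the interpolants, already controlled by Theorem \ref{thm:vem:convergence}, plus the interpolation errors from Lemmas \ref{prop:int-edge} and \ref{prop:int-nodal}. First I would treat the velocity: by the triangle inequality,
\[
\norm[\Hcurl{\Omega}]{\uu - \uu_h}
\le \norm[\Hcurl{\Omega}]{\uu - \uu_I}
+ \norm[\Hcurl{\Omega}]{\uu_I - \uu_h}.
\]
The first term is bounded by $h^s(\norm[\bvec{H}^s(\Th)]{\uu} + \seminorm[\bvec{H}^s(\Th)]{\CURL\uu})$ (or by $h^s\seminorm[\bvec{H}^s(\Th)]{\uu}$ when $s>\tfrac32$) applying Lemma \ref{prop:int-edge} elementwise and summing; the second term is exactly the quantity estimated in \eqref{eq:conv.VEM} of Theorem \ref{thm:vem:convergence}. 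Combining and absorbing $\norm[\bvec{H}^s(\Th)]{\uu} + \seminorm[\bvec{H}^s(\Th)]{\CURL\uu}$ into $\seminorm[\Hscurl{s}{\Th}]{\uu}$ (which by definition controls both the $\bvec{H}^s$-seminorm of $\uu$ and of $\CURL\uu$; note the statement of Theorem \ref{thm:vem:convergence} already asserts the bound holds for $\norm[\Hcurl{\Omega}]{\uu_h-\uu}$, so this step is essentially a restatement) gives the first displayed inequality.

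For the pressure, I would again split
\[
\seminorm[H^1(\Omega)]{p - p_h}
\le \seminorm[H^1(\Omega)]{p - p_I}
+ \seminorm[H^1(\Omega)]{p_I - p_h}.
\]
The term $\seminorm[H^1(\Omega)]{p-p_I}$ is bounded by $h^s\seminorm[H^{s+1}(\Th)]{p}$ using the $H^1$-part of the estimate in Lemma \ref{prop:int-nodal} applied with regularity index $s+1$ (valid since $\tfrac12<s<k+1$ implies $s+1\le k+2$, though one should note Lemma \ref{prop:int-nodal} as stated requires the interpolated-index exponent $>\tfrac32$, i.e. $s>\tfrac12$, which holds), summed over elements. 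The term $\seminorm[H^1(\Omega)]{p_I-p_h}$ is controlled by $\norm[H^1(\Omega)]{p_I-p_h}$, which on the zero-average space $V^\node_{k+1,0}$ is equivalent to $\seminorm[H^1(\Omega)]{\cdot}$ by Poincaré--Wirtinger, and this is precisely the second summand bounded in \eqref{eq:conv.VEM}. Adding the two contributions and using $\seminorm[\Hscurl{s}{\Th}]{\uu}$ to absorb the velocity-regularity terms appearing on the right of \eqref{eq:conv.VEM} yields the second inequality. A minor point worth a line is the normalisation of $p_I$: since $p_I = {\cal I}^\node_{k+1}(p)$ need not have exactly zero average, one either works with $p_I$ shifted by a constant (which does not affect $\seminorm[H^1(\Omega)]{\cdot}$ nor $\GRAD p_I$, hence not $\term_2$ in the proof of Theorem \ref{thm:vem:convergence}) or invokes the slight modification already alluded to there for $\beta_F<0$.

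I do not expect any genuine obstacle here: the corollary is a routine post-processing of Theorem \ref{thm:vem:convergence} via the triangle inequality and standard interpolation estimates. The only mild subtlety is bookkeeping the regularity hypotheses — in particular checking that Lemma \ref{prop:int-nodal} is applicable to $p$ with the index $s+1$ (requiring $p\in H^{s+1}(\Th)$, which is assumed, and $s+1>\tfrac32$, i.e. $s>\tfrac12$, also assumed) — and confirming that the right-hand side of \eqref{eq:conv.VEM}, which already does not involve $p$, is reproduced verbatim so that the pressure-independence of the velocity bound is preserved. Everything else is a direct combination of results already established in the excerpt.
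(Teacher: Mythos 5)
Your proposal is correct and follows exactly the paper's own (one-line) argument: a triangle inequality through the interpolants $\uu_I$ and $p_I$, Lemma \ref{prop:int-edge} and Lemma \ref{prop:int-nodal} (the latter applied with index $s+1$) for the interpolation errors, and Theorem \ref{thm:vem:convergence} for $\uu_h-\uu_I$ and $p_h-p_I$, with the right side remarks on the zero-average normalisation of $p_I$. The only caveat, which is a looseness of the paper's statement rather than of your proof, is that this route yields the pressure estimate with the additional term $h^{s}\seminorm[\bvec{H}^s(\Th)]{\CURL\ff}$ on the right-hand side (consistently with the closing remark of Theorem \ref{thm:vem:convergence}), whereas the corollary's display omits it.
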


\section*{Acknowledgements}

The authors acknowledge the support of ANR ``NEMESIS'' (ANR-20-MRS2-0004).
LBDV also acknowledges the partial support of the PRIN 2017 grant ``Virtual Element Methods:  Analysis and Applications'' and the PRIN 2020 grant ``Advanced polyhedral discretizations  of  heterogeneous  PDEs  for  multi-physics  problems''.
DDP gratefully acknowledges the partial support of I-Site MUSE grant ``RHAMNUS'' (ANR-16-IDEX-0006).


\printbibliography

\end{document}